\documentclass[12pt,a4]{amsart}

\usepackage{cite}
\usepackage{a4wide}
\usepackage{chemarrow}
\usepackage{graphicx}

\usepackage{amssymb,amsmath,amscd}
\usepackage{latexsym}
\usepackage{tkz-graph}
\tikzset{node distance=2cm, auto}

\newtheorem{them}{\indent Theorem}[section]
\newtheorem{lem}[them]{\indent Lemma}
\newtheorem{pro}[them]{\indent Proposition}
\newtheorem{coro}[them]{\indent Corollary}
\newtheorem{ex}[them]{\indent Example}
\newtheorem{definition}[them]{\indent Definition}

\newcommand{\ig}{\operatorname{IG}}

\newcommand{\ar}{\mathcal{R}}
\newcommand{\el}{\mathcal{L}}

\newcommand{\dee}{\mbox{$\mathcal{D}$}}

\newcommand{\els}{\mbox{${\mathcal L}^{\ast}$}}

\newcommand{\ars}{\mbox{${\mathcal R}^{\ast}$}}
\newcommand{\art}{\mbox{$\widetilde{\mathcal{R}}$}}
\newcommand{\elt}{\mbox{$\widetilde{\mathcal{L}}$}}

\setlength{\textheight}{22.6cm} \setlength{\oddsidemargin}{0.0 cm}
\setlength{\textwidth}{16.8cm} \setlength{\topmargin}{-2mm}

\begin{document}
\title{Free idempotent generated semigroups over bands}
\keywords{biordered set, band, (weakly) abundant semigroup}
\date{\today}

\author{Victoria Gould}
\email{victoria.gould@york.ac.uk}
\author{Dandan Yang}
\email{ddy501@york.ac.uk}
\address{Department of Mathematics\\University
  of York\\Heslington\\York YO10 5DD\\UK}
  \subjclass[2010]{Primary 20M05;  20M10}

\begin{abstract} Free idempotent generated semigroups $\ig(E)$, where  $E$ is a biordered set, have provided a focus of recent research, the majority of the efforts concentrating on the behaviour of the maximal subgroups. Inspired by an example of Brittenham, Margolis and Meakin, several proofs have been offered that any group occurs as a maximal subgroup of some $\ig(E)$, the most recent being that of Dolinka and Ru\v{s}kuc, who show that $E$ can be taken to be a band.  From a result of  Easdown, Sapir and Volkov,  periodic elements  of any $\ig(E)$ lie in subgroups. However, little else is known  of the `global' properties of  $\ig(E)$, other than that it need not be regular, even where $E$ is a semilattice.

Since its introduction by Fountain in the late 1970s, the study of abundant and related  semigroups has given rise to  a  deep and fruitful research area. The classes of abundant and adequate semigroups extend those of regular and inverse semigroups, respectively, and themselves are contained in the classes of weakly abundant and weakly adequate semigroups. Recent significant developments include the description by Kambites, using birooted labelled trees,  of the free semigroups in the quasi-variety of adequate semigroups. 

Our main result shows that  for {\em any} band $B$, the semigroup $\ig(B)$ is  a weakly abundant semigroup and moreover satisfies a natural condition called the {\em congruence condition}.   We   show that if $B$ is a band for which $uv=vu=v$ for all $u,v\in B$ with $BvB\subset BuB$ (a condition certainly satisfied for semilattices),  then $\ig(B)$ is abundant with solvable word problem. Further, $\ig(B)$ is also abundant for a normal band $B$ for which $\ig(B)$ satisfies a given technical condition, and we give examples of such $B$.  On the other hand, we give an example of a normal band $B$ such that $\ig(B)$ is not abundant.
\end{abstract}
\maketitle
\section{Introduction}\label{sec:intro}

Let $S$ be a semigroup with  set of idempotents $E=E(S)$. It is easy to see that if idempotents of $S$ commute, then $E$ may be endowed with a partial order under which it becomes a semilattice, that is, every pair of elements has a greatest lower bound, which is just their product in $S$. For an arbitrary semigroup $S$, the  set $E$,  equipped with the restriction of the quasi-orders $\leq_{\mathcal{R}}$ and $\leq_\mathcal{L}$ defined on $S$, forms  a {\it biordered set} \cite{nambooripad:1979}. On the other hand, Easdown \cite{easdown:1985} shows every biordered set $E$ occurs as $E(S)$ for some semigroup $S.$ 

Given a biordered set $E$, which we can without prejudice take as the set $E$ of idempotents of some semigroup $S$, there is a free object in the category of semigroups that are generated by $E$. This is  called the {\it free idempotent generated semigroup} over $E$, and is given by the following presentation:
\[\ig(E)=\langle \overline{E}:  \bar{e}\bar{f}=\overline{ef},\, e,f\in E, \{ e,f\}
\cap \{ ef,fe\}\neq \emptyset\rangle,\]
where $\overline{E}=\{ \bar{e}:e\in E\}$.\footnote{It is more usual to identify elements of
$E$ with those of $\overline{E}$, but it helps the clarity of our later arguments to
make this distinction.} Note that $\{ e,f\}
\cap \{ ef,fe\}\neq \emptyset$ implies both $ef$ and $fe$ are idempotents of $E$; they  are referred to as {\em basic} products. Clearly, there is a natural morphism $\varphi$ from $\ig(E)$ to $\langle E\rangle$, the subsemigroup of $S$ generated by $E$. In fact, $E(\ig(E))=\overline{E}$, and the restriction $\varphi|_{\overline{E}}: \overline{E}\longrightarrow E$ is an isomorphism of biordered sets \cite{easdown:1985}. We refer our readers to \cite{gray:2012} for other classical properties of $\ig(E)$.

Given the universal nature of free idempotent generated semigroups, it is natural to enquire into their structure. A popular theme is to investigate their maximal subgroups, facilitated by the fact that {\em regular} $\dee$-classes of $\ig(E)$ have an `egg-box' structure corresponding to that in $S$ (see \cite{gray:2012}). Motivated by an example by  Brittenham, Margolis and Meakin \cite{brittenham:2009}, it was proved, first by Gray and Ru\v{s}kuc \cite{gray:2012} and later by the authors \cite{gould:2012}, that {\it every} group is a maximal subgroup of $\ig(E)$ for some biordered set $E$.  Dolinka and  Ru\v{s}kuc show that  $E$ may be taken to be a {\em band} (that is, a {semigroup} of idempotents) \cite{dolinka:2013}, thus, in particular, demonstrating the signficance of bands in the study  of free idempotent generated semigroups.

Whereas a deal of energy has recently been put into the question of the maximal subgroups of free idempotent generated semigroups $\ig(E)$, in contrast, very little is known of the overall structure of semigroups of this form. What can be said is that periodic elements of $\ig(E)$ must lie in subgroups, a result of Easdown, Sapir and Volkov \cite{easdown:2010}, and that $\ig(E)$ need not be regular. Indeed, even for a semilattice $Y$,  the semigroup $\ig(Y)$ need not be regular \cite[Example 2]{brittenham:2009}.   Regularity is a property of semigroups that can be phrased in terms of Green's relations $\ar$ and $\el$ and idempotents. Analogous but weaker conditions are those of being {\em abundant} and {\em weakly abundant}, which are defined in the same way but with $\ar$ and $\el$ replaced by $\ars$ and $\els$, or $\art$ and $\elt$, respectively. If idempotents of a (weakly) abundant semigroup commute then the semigroup is called (weakly) adequate. 

Our main result is that for an arbitrary band $B$, the semigroup $\ig(B)$ is  { weakly} abundant and is such that $\art$ and $\elt$ are, respectively, left and right congruences, a property called  the {\em congruence condition}. We remark that regular, abundant and restriction semigroups always have the congruence condition. On the other hand, we give an example of a band $B$ such that $\ig(B)$ is not abundant. In the positive direction we  investigate several conditions on a band $B$ that guarantee abundancy of $\ig(B)$. 

We proceed as follows. To make this article as self-contained as possible, in Section \ref{sec:pre} we recall some basics of Green's relations and regular semigroups, and of generalised Green's relations and (weakly) abundant semigroups.  We briefly describe how the presentation of any $\ig(E)$ naturally induces a reduction system. In Section \ref{sec:semilattices} we begin our investigation of free idempotent generated semigroups over bands by looking at a semilattice (that is, a commutative band) $Y.$ We show that every element of $\ig(Y)$ has a unique normal form and consequently $\ig(Y)$ has solvable word problem (a result that might be described as `folklore'). We then proceed to show that $\ig(Y)$ is abundant, and hence adequate. We remark that adequate semigroups form a quasi-variety of biunary semigroups for which the free algebras have recently been described by Kambites \cite{kambites:2011}; our semigroups $\ig(Y)$ are new and natural examples of adequate semigroups {\em not} possessing the so-called ample condition (see \cite{kambites:2011}). The adequacy of $\ig(Y)$ can be obtained as a corollary of Proposition \ref{simple normal band}, however, our straightforward early proof makes clear the strategies we subsequently use in other contexts. In a short Section \ref{sec:rectangular bands}, we show that for any rectangular band $B$, the semigroup $\ig(B)$ is regular -  another result we believe is known, but from which we want to extract specific information for our later purposes. We then proceed to look at a general band $B$ in Section \ref{sec:bands}. Unlike the case of semilattices and rectangular bands, here we may lose  uniqueness of normal forms. To overcome this problem, the concept of {\it almost normal forms} is introduced. We prove that for any band $B$, the semigroup $\ig(B)$ is  weakly abundant  with the congruence condition. We finish the section with an example of a four element non-normal band $B$ such that $\ig(B)$ is not abundant.

We then consider some sufficient conditions for $\ig(B)$ to be abundant. In Section \ref{sec:locally large} we introduce the class of {\em locally large} bands $B$, which are defined by the property that $uv=vu=v$ for all $u,v\in B$ with $BvB \subset BuB$.  We show that the word problem for $\ig(B)$ where $B$ is a locally large band is solvable. Subsequently, in Section \ref{sec:(P)}, we show that if $B$ is a locally large band or a normal band for which $\ig(B)$ satisfies a condition we label $(P),$ then $\ig(B)$ is an abundant semigroup.  We then find two classes of normal bands satisfying  Condition $(P)$. One would naturally ask here  whether $\ig(B)$ is abundant for an arbitrary normal band $B$. In Section \ref{sec:normal bands} we construct a ten element normal band $B$ with four $\mathcal{D}$-classes for which $\ig(B)$ is not abundant.

\section{Preliminaries: (weakly) abundant semigroups and Reduction systems}\label{sec:pre}

The aim of this section is to give the  technicalities needed for this article. We do not assume our readers have prior background of the area.  

Throughout this paper, for $n\in \mathbb{N}$ we write $[1,n] $ to denote $\{1, \cdots, n\}\subseteq \mathbb{N}.$ The free semigroup on a set $A$ is denoted by $A^+$; the elements of $A^+$ are words in the letters of $A$ and the binary operation is juxtaposition. The set of idempotents of  a semigroup $S$ is always denoted by $E(S)$ or more simply $E$.

We start by introducing an important tool for analysing  ideals of a semigroup $S$ and related notions of structure,
called {\it Green's relations}. There are  equivalence relations that characterise the elements of $S$
in terms of the principal ideals they generate. The two most basic of Green's relations
are $\mathcal{L}$ and $\mathcal{R}$, and are defined by
$$a~\mathcal{L}~b\Longleftrightarrow S^1a=S^1b, a~\mathcal{R}~b\Longleftrightarrow aS^1=bS^1,$$ where $S^1$ denotes $S$ with an identity element adjoined (unless $S$ already has one.) Furthermore, we denote the intersection $\mathcal{L}~\cap~\mathcal{R}$ by $\mathcal{H}$ and the join $\mathcal{L}~\vee~\mathcal{R}$  by $\mathcal{D}.$ It is known that $\mathcal{L}~\circ~\mathcal{R}=\mathcal{R}~\circ~\mathcal{L},$ and hence $\mathcal{D}=\mathcal{L}~\circ~\mathcal{R}=\mathcal{R}~\circ~\mathcal{L}.$

An element $a\in S$ is called {\it regular} if there exists $x\in S$ such that $a=axa$, that is, it is regular in the sense of von Neumann. A semigroup $S$ is {\it regular } if consists entirely of regular elements. We say that $S$ is {\it inverse} if it is regular and its idempotents commute; equivalently, they form a semilattice under the partial order $\leq$ where
$e\leq f$ if and only if $e=ef=fe$.  It is well known that $S$ is regular (inverse) if and only if each $\mathcal{L}$-class and each $\mathcal{R}$-class contain a (unique) idempotent.   Regular semigroups are particularly amenable to analysis using Green's relations.

As a generalisation of Green's relations, the  relations $\mathcal{L}^*$ and $\mathcal{R}^*$ are defined on a semigroup $S$ by the rule that
$$a~ \mathcal{L}^* ~b~\Longleftrightarrow~(\forall x,y\in S^1)~(ax=ay\Leftrightarrow bx=by)$$ and
$$a~ \mathcal{R}^* ~b~\Longleftrightarrow~(\forall x,y\in S^1)~(xa=ya\Leftrightarrow xb=yb)$$
where here $S^1$ is the convenient device of the semigroup $S$ with an identity adjoined if necessary.

It is easy to see that $\mathcal{L}\subseteq \mathcal{L}^*$, $\mathcal{R}\subseteq \mathcal{R}^*$, and if $S$ is regular, then
$\mathcal{L}=\mathcal{L}^*$ and $\mathcal{R}= \mathcal{R}^*$. We denote by $\mathcal{H}^*$ the intersection $\mathcal{L}^*~\cap~\mathcal{R}^*$, and by $\mathcal{D}^*$ the join of  $\mathcal{L}^*~\vee~\mathcal{R}^*.$ Note that unlike Green's relations, generally $\mathcal{L}^*\circ \mathcal{R}^*\neq\mathcal{R}^*\circ\mathcal{L}^*.$ 

A semigroup $S$ is {\it abundant} if each $\mathcal{L}^*$-class and each $\mathcal{R}^*$-class contains an idempotent. An abundant semigroup is
{\it adequate} if its idempotents form a semilattice. In view of the comment above, regular semigroups
are abundant while inverse semigroups are adequate. In the theory of abundant semigroups the relations $\mathcal{L}^*$, $\mathcal{R}^*$, $\mathcal{H}^*$ and $\mathcal{D}^*$  play a role which is analogous to that of
Green's relations in the theory of regular semigroups.

As an easy but useful consequence of the definition of $\mathcal{L}^*$, we have the following lemma (a dual result holds for $\mathcal{R}^*$).

\begin{lem}\cite{fountain:1982}\label{fountain}
Let $S$ be a semigroup with $a\in S$ and $e\in E(S)$. Then the following statements are equivalent:

(i) $a~\mathcal{L}^*~e$;

(ii) $ae=a$ and for any $x,y\in S^1$, $ax=ay$ implies $ex=ey$.
\end{lem}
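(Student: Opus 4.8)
The plan is to prove the two implications separately, exploiting the fact that the definition of $\mathcal{L}^*$ is a biconditional quantified over all pairs $x,y\in S^1$, whereas condition (ii) records only one of the two implications explicitly, the other being hidden in the single equation $ae=a$.

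For (i) $\Rightarrow$ (ii), I would first extract $ae=a$ from $a\,\mathcal{L}^*\,e$ by feeding the idempotency of $e$ into the defining equivalence. Taking $x=e$ and $y=1$ in $S^1$, the idempotent law gives $ex=ee=e=e1=ey$, so the right-hand side of the biconditional holds; hence by $a\,\mathcal{L}^*\,e$ the left-hand side holds too, namely $ae=a1$, that is $ae=a$. The implication ``$ax=ay$ implies $ex=ey$'' demanded in (ii) is then immediate, being simply the forward half of the biconditional in the definition of $\mathcal{L}^*$.

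For (ii) $\Rightarrow$ (i), the forward direction ``$ax=ay \Rightarrow ex=ey$'' is handed to us directly by (ii), so the only thing left to supply is the reverse direction ``$ex=ey \Rightarrow ax=ay$''. Here I would left-multiply by $a$: from $ex=ey$ we obtain $a(ex)=a(ey)$, hence $(ae)x=(ae)y$, and since $ae=a$ this reads $ax=ay$. Combining the two directions yields the full biconditional, that is $a\,\mathcal{L}^*\,e$.

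The argument is short and presents no genuine obstacle, but the one point worth flagging is the precise role of the equation $ae=a$. It is exactly what is needed to convert an equality $ex=ey$ into $ax=ay$ by left multiplication, and so it silently supplies the reverse implication that (ii) leaves unstated. Recognising that $ae=a$ functions in this way — rather than as an incidental normalisation — is the conceptual content of the lemma, and it is what makes the two-part condition in (ii) genuinely equivalent to lying in the same $\mathcal{L}^*$-class as the idempotent $e$.
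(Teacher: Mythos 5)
Your proposal is correct: both implications are established exactly as they should be, with the key observation that $ae=a$ (extracted by taking $x=e$, $y=1$ in the defining biconditional) is precisely what recovers the implication $ex=ey\Rightarrow ax=ay$ by left multiplication. The paper itself gives no proof, citing the lemma from Fountain's 1982 paper, and your argument is the standard one found there, so there is nothing to compare beyond noting full agreement.
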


A third set of relations, extending the starred versions of Green's relations, and useful for semigroups that are not abundant, were introduced in \cite{lawson:1991}.
 The relations $\widetilde{\mathcal{L}}$ and $\widetilde{\mathcal{R}}$ on a semigroup $S$ are defined by the rule
$$a~ \mathcal{\widetilde{L}} ~b~\Longleftrightarrow~(\forall e\in E(S))~(ae=a\Leftrightarrow be=b)$$
and $$a~ \mathcal{\widetilde{R}} ~b~\Longleftrightarrow~(\forall e\in E(S))~(ea=a\Leftrightarrow eb=b)$$
for any $a,b\in S.$

We remark here that $\mathcal{L}\subseteq \mathcal{L}^*\subseteq \mathcal{\widetilde{L}}$ and $\mathcal{R}\subseteq \mathcal{R}^*\subseteq \mathcal{\widetilde{R}}$. Moreover, if $S$ is regular, then $\mathcal{L}=\mathcal{L}^*=\mathcal{\widetilde{L}}$ and $\mathcal{R}= \mathcal{R}^*=\mathcal{\widetilde{R}}$. Whereas $\mathcal{L}^*$ and $\mathcal{R}^*$ are always right and left congruences on $S$, respectively,  the same is not necessarily true for $\mathcal{\widetilde{L}}$ and $\mathcal{\widetilde{R}}$.

A semigroup $S$ is {\it weakly abundant} if each $\mathcal{\widetilde{L}}$-class and each $\mathcal{\widetilde{R}}$-class contains an idempotent.  We say that a weakly abundant semigroup $S$ satisfies the {\it congruence condition} if $\mathcal{\widetilde{L}}$ is a right congruence and $\mathcal{\widetilde{R}}$ is a left congruence.

The following lemma is an analogue of Lemma \ref{fountain}. Of course, a dual result holds for $\widetilde{\mathcal{R}}$.

\begin{lem}\cite{lawson:1991}
Let $S$ be a semigroup with $a\in S$ and $e\in E(S)$. Then the following statements are equivalent:

(i) $a~\mathcal{\widetilde{L}}~e$;

(ii) $ae=a$ and for any $f\in E(S)$, $af=a$ implies $ef=e$.
\end{lem}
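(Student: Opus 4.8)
The plan is to prove the two implications directly from the definition of \elt, namely that $a \elt e$ means $ag=a \Leftrightarrow eg=e$ for every $g\in E(S)$. The whole argument turns on two elementary observations: that the idempotent $e$ satisfies $ee=e$, so $e$ itself is a legitimate test element to feed into the defining biconditional, and that once we know $ae=a$ we can freely insert $e$ into products involving $a$. The structure mirrors that of Lemma \ref{fountain}, but is in fact lighter, since here we only ever quantify over idempotents rather than over all of $S^1$.

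For (i) $\Rightarrow$ (ii), I would first extract $ae=a$ by instantiating the biconditional at the test idempotent $g=e$: the right-hand side $ee=e$ holds trivially, so the equivalence forces the left-hand side $ae=a$. For the second clause, given any $f\in E(S)$ with $af=a$, I instantiate the biconditional at $g=f$; the left-hand side now holds, so the right-hand side $ef=e$ follows at once. Thus both requirements of (ii) are read off as special cases of the definition.

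For (ii) $\Rightarrow$ (i), I must verify $ag=a \Leftrightarrow eg=e$ for an arbitrary $g\in E(S)$. The forward direction is exactly the implication assumed in (ii), so nothing is needed there. The converse is where the single hypothesis $ae=a$ does the work: assuming $eg=e$, I compute $ag=(ae)g=a(eg)=ae=a$ using associativity, which gives $ag=a$. This completes the equivalence and hence $a \elt e$. The only step that is not a pure substitution is this last computation, so if there is any obstacle it lies in recognising that $ae=a$ is precisely the ingredient that transports the relation $eg=e$ from $e$ onto $a$; everything else is a direct appeal to the defining condition with a suitably chosen idempotent.
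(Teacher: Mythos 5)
Your proof is correct. The paper itself does not prove this lemma --- it is quoted from Lawson's paper \cite{lawson:1991} without proof --- so there is no internal argument to compare against, but your two-directional argument is precisely the standard one: extracting $ae=a$ by testing the defining biconditional at the idempotent $e$ itself, reading off the implication clause by testing at $f$, and, for the converse, using $ae=a$ together with associativity to transport $eg=e$ into $ag=(ae)g=a(eg)=ae=a$. This also mirrors exactly how Lemma \ref{fountain} is proved for $\mathcal{L}^*$, with the simplification (which you correctly note) that one only quantifies over idempotents rather than over all of $S^1$.
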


From easy observation, we have the following useful lemmas.

\begin{lem}\label{idempotents}
Let $S$ be a semigroup with $e,f \in E(S)$. Then $e~\mathcal{L}~f$ if and only if $e~\mathcal{\widetilde{L}}~f$ and $e~\mathcal{R}~f$ if and only if $e~\mathcal{\widetilde{R}}~f.$
\end{lem}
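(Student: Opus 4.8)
The plan is to use the inclusion chain $\mathcal{L}\subseteq \mathcal{L}^*\subseteq \widetilde{\mathcal{L}}$ recorded above for the forward implication, and to prove the reverse implication by testing the defining condition of $\widetilde{\mathcal{L}}$ against the two idempotents $e$ and $f$ themselves. I treat the statement for $\mathcal{L}$ and $\widetilde{\mathcal{L}}$ in detail; the claim for $\mathcal{R}$ and $\widetilde{\mathcal{R}}$ is entirely dual. For the easy direction, if $e~\mathcal{L}~f$ then since $\mathcal{L}\subseteq \widetilde{\mathcal{L}}$ we immediately obtain $e~\widetilde{\mathcal{L}}~f$, without even using that $e$ and $f$ are idempotent.

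For the converse, suppose $e~\widetilde{\mathcal{L}}~f$. By definition this says that for every $g\in E(S)$ we have $eg=e$ if and only if $fg=f$. The key point is that $e$ and $f$ are themselves members of $E(S)$, hence are legitimate choices for $g$. Taking $g=e$, the equality $ee=e$ holds because $e$ is idempotent, so the defining equivalence forces $fe=f$; symmetrically, taking $g=f$ and using $ff=f$ yields $ef=e$. Thus I will have established the two relations $ef=e$ and $fe=f$.

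It then remains to convert these equalities into $\mathcal{L}$-equivalence: from $e=ef\in S^1f$ I get $S^1e\subseteq S^1f$, and from $f=fe\in S^1e$ I get $S^1f\subseteq S^1e$, whence $S^1e=S^1f$, that is, $e~\mathcal{L}~f$. (These two equalities are just the familiar criterion for two idempotents to be $\mathcal{L}$-related.) There is no genuine obstacle here; the only point requiring a moment's thought is the observation that although $\widetilde{\mathcal{L}}$ quantifies over all of $E(S)$, the relationship between two idempotents is already pinned down by evaluating that quantifier at $e$ and $f$. The $\mathcal{R}$/$\widetilde{\mathcal{R}}$ statement follows by the left--right dual argument: from $e~\widetilde{\mathcal{R}}~f$ one takes $g=e$ and $g=f$ in the equivalence $ge=e\Leftrightarrow gf=f$ to obtain $ef=f$ and $fe=e$, and hence $eS^1=fS^1$.
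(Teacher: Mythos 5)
Your proof is correct: the paper states this lemma without proof (it is introduced only as an ``easy observation''), and your argument---using $\mathcal{L}\subseteq\widetilde{\mathcal{L}}$ for one direction, and instantiating the quantifier in the definition of $\widetilde{\mathcal{L}}$ at $g=e$ and $g=f$ to get $ef=e$, $fe=f$, hence $S^1e=S^1f$, for the other---is exactly the standard argument the authors had in mind. The dual treatment of $\mathcal{R}$ and $\widetilde{\mathcal{R}}$ is likewise correct.
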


\begin{lem}\label{observation1}
Let $S$ be a semigroup, and let $a\in S$, $f\in E(S)$ be such that $a~\mathcal{\widetilde{R}}~f$ but  $a$ is not $\mathcal{R}^*$-related to $f$. Then $a$ is not $\mathcal{R}^*$-related to {\em any} idempotent of $S$.
\end{lem}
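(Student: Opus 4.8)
The plan is to prove the contrapositive in the form of a direct argument: assuming $a~\widetilde{\mathcal{R}}~f$ with $f\in E(S)$, I would suppose for contradiction that $a~\mathcal{R}^*~g$ for some idempotent $g\in E(S)$, and derive that $a~\mathcal{R}^*~f$, contradicting the hypothesis. The key is to transfer the $\mathcal{R}^*$-relationship from $g$ to $f$ using the weaker $\widetilde{\mathcal{R}}$-relationship between $a$ and $f$. Since $\mathcal{R}^*\subseteq \widetilde{\mathcal{R}}$ and $\widetilde{\mathcal{R}}$ is an equivalence relation, from $a~\mathcal{R}^*~g$ we get $a~\widetilde{\mathcal{R}}~g$, and combined with $a~\widetilde{\mathcal{R}}~f$ we obtain $g~\widetilde{\mathcal{R}}~f$. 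Now both $g$ and $f$ are idempotents, so by Lemma \ref{idempotents} this forces $g~\mathcal{R}~f$.

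First I would record that $g~\mathcal{R}~f$ for idempotents gives $gf=f$ and $fg=g$, i.e.\ $g$ and $f$ generate the same right ideal and each fixes the other on the appropriate side. The heart of the argument is then to upgrade the genuine Green relation $g~\mathcal{R}~f$ together with $a~\mathcal{R}^*~g$ into $a~\mathcal{R}^*~f$. Since $\mathcal{R}\subseteq \mathcal{R}^*$ we have $g~\mathcal{R}^*~f$, and because $\mathcal{R}^*$ is an equivalence relation, transitivity immediately yields $a~\mathcal{R}^*~f$ from $a~\mathcal{R}^*~g$ and $g~\mathcal{R}^*~f$. This contradicts the assumption that $a$ is not $\mathcal{R}^*$-related to $f$, completing the proof.

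The only genuinely substantive step is the passage $g~\widetilde{\mathcal{R}}~f \Rightarrow g~\mathcal{R}~f$, which is exactly what Lemma \ref{idempotents} supplies: for idempotents the relations $\mathcal{R}$ and $\widetilde{\mathcal{R}}$ coincide. Everything else is formal bookkeeping with the containments $\mathcal{R}\subseteq \mathcal{R}^*\subseteq \widetilde{\mathcal{R}}$ and the fact that all three relations are equivalences. I expect the main (and really the only) obstacle to be making sure the chain of implications is assembled in the correct order so that the idempotency of both $g$ and $f$ is available precisely when Lemma \ref{idempotents} is invoked; once $g~\mathcal{R}~f$ is in hand the contradiction is immediate. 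No delicate computation with elements of $S$ is needed, since the argument runs entirely at the level of the abstract relations.
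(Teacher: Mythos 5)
Your proposal is correct and follows essentially the same route as the paper's own proof: assume $a~\mathcal{R}^*~g$ for some idempotent $g$, pass to $\widetilde{\mathcal{R}}$ via $\mathcal{R}^*\subseteq\widetilde{\mathcal{R}}$, use Lemma \ref{idempotents} to get $g~\mathcal{R}~f$, and then use $\mathcal{R}\subseteq\mathcal{R}^*$ and transitivity to reach the contradiction $a~\mathcal{R}^*~f$. The only difference is cosmetic (your aside about $gf=f$, $fg=g$ is not needed), so there is nothing to correct.
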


\begin{proof}
Suppose that $a~\mathcal{R}^*~e$ for some idempotent $e\in E(S)$. Then $a~\mathcal{\widetilde{R}}~e$, as $\mathcal{R}^*\subseteq \mathcal{\widetilde{R}}$, so that $e~\mathcal{\widetilde{R}}~f$  by assumption, and so $e~\mathcal{R}~f$ by Lemma \ref{idempotents}. Hence $a~\mathcal{R}^*~f$ as $\mathcal{R}\subseteq \mathcal{R}^*,$ a contradiction.
\end{proof}

\begin{lem}\label{observation2}
Let $S$ be a weakly abundant semigroup with $a\in S$ and $e\in E(S)$ such that $a~\mathcal{\widetilde{R}}~e$. Then $a~\mathcal{R}^*~e$ if and only if for any $x,y\in S$, $xa=ya$ implies that $xe=ye.$
\end{lem}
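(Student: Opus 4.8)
The plan is to prove the two implications separately; the forward direction is immediate from the definition of $\ars$, while the converse rests entirely on the equality $ea=a$ that is forced by the hypothesis $a\,\art\,e$.

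For the forward direction, suppose $a\,\ars\,e$. By definition of $\ars$, for all $x,y\in S^1$ we have $xa=ya$ if and only if $xe=ye$; restricting the quantifier to $x,y\in S$ and retaining only the left-to-right implication gives precisely the stated condition, so nothing is required here.

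For the converse, assume that $xa=ya$ implies $xe=ye$ for all $x,y\in S$. First I would extract from $a\,\art\,e$ the identity $ea=a$: instantiating the idempotent quantified in the definition of $\art$ at $e$ itself turns the biconditional $ea=a\Leftrightarrow ee=e$ into $ea=a$, since $ee=e$ holds. By the dual of Lemma \ref{fountain}, to conclude $a\,\ars\,e$ it now suffices to promote the given implication from $S$ to $S^1$, that is, to verify $xa=ya\Rightarrow xe=ye$ for all $x,y\in S^1$. The case $x,y\in S$ is exactly the hypothesis and the case $x=y=1$ is trivial, so the only genuine cases are those in which precisely one of $x,y$ is the adjoined identity.

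This is where $ea=a$ does the work. Taking, say, $x=1$ and $y\in S$, the premise reads $a=ya$, and then $(ye)a=y(ea)=ya=a=ea$, so that $(ye)a=ea$; applying the hypothesis to the pair $ye,e\in S$ yields $(ye)e=ee$, i.e.\ $ye=e$, which is the desired $xe=ye$. The case $x\in S$, $y=1$ is symmetric. Having promoted the implication to $S^1$, the dual of Lemma \ref{fountain} delivers $a\,\ars\,e$. The only subtle point, and the step I expect to be the main obstacle, is exactly this passage across the adjoined identity: the bare hypothesis quantifies only over $S$, whereas $\ars$ quantifies over $S^1$, and the weakly abundant input $a\,\art\,e$ (through $ea=a$) is precisely what bridges that gap; everything else is formal.
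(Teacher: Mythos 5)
Your proof is correct and takes essentially the same approach as the paper: both extract $ea=a$ from $a~\mathcal{\widetilde{R}}~e$, reduce via (the dual of) Lemma \ref{fountain} to the case where one of $x,y$ is the adjoined identity, and use $ea=a$ to transfer that case back into $S$ where the hypothesis applies. The only cosmetic difference is that the paper handles that case by applying the hypothesis directly to the pair $y,e\in S$ (from $ya=a=ea$ conclude $ye=ee=e$), whereas you take a harmless detour through the element $ye$; both are valid.
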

\begin{proof}
Suppose that for all $x,y\in S$, if $xa=ya$ then $xe=ye.$ By Lemma \ref{fountain}, we  need only  show that if $x\in S$ and $xa=a$, then $xe=e$. Suppose therefore that $x\in S$ and $xa=a$. As $a~\mathcal{\widetilde{R}}~e$, we have  $xa=a=ea,$ so that by assumption, $xe=ee=e$.
\end{proof}

In the rest of this section we recall the definition of reduction systems and their properties. As far as possible we follow standard notation and terminology, as may be found in \cite{Otto:1993}.

\medskip

Let $A$ be a set of objects and $\longrightarrow$  a binary relation on $A$. We call the structure $(A, \longrightarrow)$  a {\it reduction system} and the relation $\longrightarrow$ a {\it reduction relation}. The reflexive, transitive closure of $\longrightarrow$ is denoted by $\overset{*}\longrightarrow,$ while $\overset{*}\longleftrightarrow$ denotes the smallest equivalence relation on $A$ that contains $\longrightarrow.$ We denote the equivalence class of an element $x\in A$ by $[x].$  An element $x\in A$ is said to be {\it irreducible} if there is no $y\in A$ such that $x\longrightarrow y;$ otherwise, $x$ is {\it reducible.} For any $x,y\in A,$ if $ x\overset{*}\longrightarrow y$ and $y$ is irreducible, then $y$ is a {\it normal form} of $x.$ A reduction system $(A,\longrightarrow)$  is {\it noetherian} if there is no infinite sequence $x_0,x_1,\cdots~\in A$ such that for all $i\geq 0$, $x_i\longrightarrow x_{i+1}$.

We say that a reduction system $(A,\longrightarrow)$  is {\it confluent} if whenever $w,x,y\in A$, are such that $w\overset{*}{\longrightarrow}x$ and $w\overset{*}{\longrightarrow}y$, then there is a $z\in A$ such that $x\overset{*}{\longrightarrow}z$ and $y\overset{*}{\longrightarrow}z$, as described by the figure below on the left, and $(A,\longrightarrow)$   is {\it locally confluent} if whenever $w,x,y\in A$, are such that $w~{\longrightarrow}~x$ and $w~{\longrightarrow}~y$, then there is a $z\in A$ such that $x\overset{*}{\longrightarrow}z$ and $y\overset{*}{\longrightarrow}z,$  as described by the figure below on the right.
\begin{center}
\begin{tikzpicture}[scale=1.0]
\node (w1) at (0,1) {$w$};
\node (x1) at (-1,0) {$x$};
\node (y1) at (1,0) {$y$};
\node (z1) at (0,-1) {$z$};
\node (w2) at (3.5,1) {$w$};
\node (x2) at (2.5,0) {$x$};
\node (y2) at (4.5,0) {$y$};
\node (z2) at (3.5,-1) {$z$};
\path[->,font=\scriptsize,>=angle 60]
(w1) edge node[above]{*} (x1)
(w1) edge node[right]{*} (y1);
\path[->,dashed,font=\scriptsize,>=angle 60]
(x1) edge node[right]{*} (z1)
(y1) edge node[above]{*} (z1);
\path[->,font=\scriptsize,>=angle 60]
(w2) edge node[above]{} (x2)
(w2) edge node[right]{} (y2);
\path[->,dashed,font=\scriptsize,>=angle 60]
(x2) edge node[right]{*} (z2)
(y2) edge node[above]{*} (z2);
\end{tikzpicture}
\end{center}

\begin{lem}\cite{Otto:1993} \label{normal form}
Let $(A,\longrightarrow)$  be a reduction system. Then the following statements hold:

(i) If $(A,\longrightarrow)$  is noetherian and confluent, then for each $x\in A$, $[x]$ contains a unique normal form.

(ii) If $(A,\longrightarrow)$ is noetherian, then it is confluent if and only if it is locally confluent.
\end{lem}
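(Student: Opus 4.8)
The plan is to treat the two parts separately, using the noetherian hypothesis for existence and a convergence argument for uniqueness in (i), and well-founded (noetherian) induction for the hard direction of (ii).

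For part (i), existence of a normal form of any $x\in A$ is immediate from the noetherian hypothesis: starting from $x$ and repeatedly applying $\longrightarrow$ wherever possible produces a sequence that cannot be infinite, hence terminates at an irreducible element, which is by definition a normal form of $x$. For uniqueness within the class $[x]$, the key step is to upgrade confluence to the Church--Rosser property, namely that if $a\overset{*}\longleftrightarrow b$ then $a$ and $b$ possess a common reduct, i.e. there is $z$ with $a\overset{*}\longrightarrow z$ and $b\overset{*}\longrightarrow z$. To see this I would define the relation $\downarrow$ on $A$ by letting $a\downarrow b$ hold exactly when such a common $z$ exists. This relation is reflexive and symmetric by construction, contains $\longrightarrow$, and, crucially, is transitive precisely because of confluence: given $a\downarrow b$ and $b\downarrow c$, confluence applied to the two reductions out of $b$ lets one merge the witnesses into a single common reduct of $a$ and $c$. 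Hence $\downarrow$ is an equivalence relation containing $\longrightarrow$, so it contains the smallest such relation $\overset{*}\longleftrightarrow$; the reverse inclusion is trivial. Consequently any two normal forms lying in $[x]$ share a common reduct, and since each is irreducible this forces them to be equal.

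For part (ii), one implication is trivial: every single-step divergence $w\longrightarrow x$, $w\longrightarrow y$ is in particular a divergence $w\overset{*}\longrightarrow x$, $w\overset{*}\longrightarrow y$, so confluence immediately yields local confluence. The substantive direction is that, under the noetherian hypothesis, local confluence implies confluence; this is Newman's Lemma, which I would prove by noetherian induction on $w$. The property to be established at $w$ is that any two reductions $w\overset{*}\longrightarrow x$ and $w\overset{*}\longrightarrow y$ have a common reduct. If either reduction is empty the claim is trivial, so write the first steps as $w\longrightarrow w_1\overset{*}\longrightarrow x$ and $w\longrightarrow w_2\overset{*}\longrightarrow y$. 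Local confluence supplies a common reduct $u$ of $w_1$ and $w_2$. Applying the induction hypothesis at $w_1$ to the pair $w_1\overset{*}\longrightarrow x$ and $w_1\overset{*}\longrightarrow u$ yields $v$ with $x\overset{*}\longrightarrow v$ and $u\overset{*}\longrightarrow v$; applying it again at $w_2$ to $w_2\overset{*}\longrightarrow y$ and $w_2\overset{*}\longrightarrow u\overset{*}\longrightarrow v$ yields $z$ with $y\overset{*}\longrightarrow z$ and $v\overset{*}\longrightarrow z$. Then $x\overset{*}\longrightarrow v\overset{*}\longrightarrow z$ and $y\overset{*}\longrightarrow z$, so $z$ is the desired common reduct.

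The main obstacle is the diagram-chasing in Newman's Lemma: local confluence only resolves divergences that begin with single steps, so the two longer reductions out of $w$ must be decomposed into a first step plus a tail, and the tails reconciled using the induction hypothesis at the intermediate vertices $w_1$ and $w_2$ rather than at $w$ itself. This is exactly where the noetherian hypothesis is indispensable, since it both licenses the induction and rules out the familiar locally-confluent-but-not-confluent examples that arise without termination. By contrast the transitivity check for $\downarrow$ in part (i), while the conceptual heart of that part, is a routine consequence of confluence once the relation is set up.
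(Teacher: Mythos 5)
Your proof is correct and complete. There is, however, nothing in the paper to compare it against: the lemma is quoted from Book and Otto \cite{Otto:1993} and the paper supplies no proof of its own. What you have written is precisely the standard argument from that literature: in part (i), termination gives existence, and uniqueness follows from upgrading confluence to the Church--Rosser property via the observation that joinability is an equivalence relation containing $\longrightarrow$ and hence contains $\overset{*}{\longleftrightarrow}$, after which irreducibility of two normal forms forces each to equal their common reduct; in part (ii), the nontrivial direction is Newman's Lemma in Huet's formulation, and your well-founded induction is set up correctly --- in particular you invoke the induction hypothesis only at the one-step successors $w_1$ and $w_2$ of $w$, which is exactly what the noetherian hypothesis licenses, and you correctly route the second application of the hypothesis through the composite reduction $w_2\overset{*}{\longrightarrow}u\overset{*}{\longrightarrow}v$. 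No gaps.
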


Let $E$ be a biordered set. We use $\overline{E}^+$ to denote the free semigroup on $\overline{E}=\{\overline{e}:e\in E\}$.

\begin{lem}\label{reduction systems}
Let $E$ be a  biordered set, and let $R$ be the relation on $\overline{E}^+$ defined  by $$R=\{(\bar{e}\bar{f}, \overline{ef}):~(e,f) \mbox{~is a basic pair}\}.$$ Then $(\overline{E}^+, \longrightarrow)$ forms a noetherian reduction system, where $\longrightarrow$ is defined by
$$u\longrightarrow v  \Longleftrightarrow (\exists~(l,r)\in R)~(\exists~x,y\in \overline{E}^+) ~u=xly \mbox{~and~} v=xry.$$
\end{lem}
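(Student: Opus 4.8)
The statement packages two claims: that $(\overline{E}^+,\longrightarrow)$ is a reduction system, and that it is noetherian. The plan is to dispose of the first by checking that $\longrightarrow$ is a well-defined binary relation on $\overline{E}^+$, and to establish the second by exhibiting a strictly decreasing quantity along every reduction step, namely word length.

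First I would verify that $R$ is a subset of $\overline{E}^+\times\overline{E}^+$. If $(e,f)$ is a \emph{basic pair}, then $\{e,f\}\cap\{ef,fe\}\neq\emptyset$, and as noted in the introduction this forces the product $ef$ to be an idempotent of $E$. Hence $\overline{ef}\in\overline{E}$ is a single generator, lying in $\overline{E}^+$, while $\bar e\bar f$ is a word of length two in $\overline{E}^+$; so each pair $(\bar e\bar f,\overline{ef})$ is indeed a pair of elements of $\overline{E}^+$. Since $\longrightarrow$ is defined from $R$ by closing under left and right multiplication by words, it too is a binary relation on $\overline{E}^+$, and $(\overline{E}^+,\longrightarrow)$ is a reduction system.

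For the noetherian property, the key observation is that every rule in $R$ is length-reducing: it replaces the two-letter word $\bar e\bar f$ by the single letter $\overline{ef}$. Writing $|w|$ for the length of a word $w\in\overline{E}^+$, suppose $u\longrightarrow v$, so that $u=x\,\bar e\bar f\,y$ and $v=x\,\overline{ef}\,y$ for a basic pair $(e,f)$ and (possibly empty) words $x,y$. Then $|u|=|x|+2+|y|=|v|+1$, so $|u|>|v|$; a single reduction step strictly decreases length by exactly one.

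It follows at once that there is no infinite reduction sequence: any chain $x_0\longrightarrow x_1\longrightarrow\cdots$ would yield a strictly decreasing sequence $|x_0|>|x_1|>\cdots$ of positive integers, which is impossible. Hence $(\overline{E}^+,\longrightarrow)$ is noetherian. There is no real obstacle in this argument; the only point demanding attention is the verification that $\overline{ef}$ is a genuine single letter, which is exactly where the defining property of a basic pair, that $ef$ is idempotent, is used.
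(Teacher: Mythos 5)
Your proposal is correct and matches the paper's (very brief) proof, which simply asserts the result follows directly from the definitions; your length-decreasing argument, with $|u|=|v|+1$ at each step, is exactly the intended justification for the noetherian property. The one point you handle more carefully than the statement itself is allowing $x,y$ to be empty in a reduction step, which is clearly the intended reading since otherwise no rule could apply to a two-letter word.
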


\begin{proof}
The proof follows directly from the definitions of the reduction system and the binary relation $\longrightarrow.$
\end{proof}

We remark here that in the reduction system $(\overline{E}^+,\longrightarrow)$ induced by $\ig(E)$, the smallest equivalence relation $\overset{*}{\longleftrightarrow}$ on $\overline{E}^+$ is exactly the congruence generated by $R.$

Finally in this section we recall that a semigroup of the form $S=X^+/\rho$, where $\rho$ is a congruence on $X^+$, has
{\em solvable word problem} if there is an algorithm to decide when two elements of $X^+$ give the same element of $S$.

\section{Free idempotent generated semigroups over semilattices}\label{sec:semilattices}

We start our investigation of free idempotent generated semigroups $\ig(B)$ over bands $B$, by looking at the special case of semilattices. Throughout this section we will use the letter $Y$ to denote a semilattice. We prove that $\ig(Y)$ is an adequate semigroup; however, it need not be regular.

It follows from Lemma \ref{reduction systems} that $\ig(Y)$  naturally induces  a noetherian  reduction system $(\overline{Y}^+, \longrightarrow)$. The next result appears to be well known to workers in this area. 

\begin{lem}\label{uniqueness}
Let $Y$ be a semilattice. Then every element in $\ig(Y)$ has a unique normal form and consequently, $\ig(Y)$ has solvable word problem.
\end{lem}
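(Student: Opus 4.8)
The goal is to show that each $\overset{*}{\longleftrightarrow}$-class in $\overline{Y}^+$ contains a unique irreducible element (normal form), from which solvability of the word problem follows by running the reduction to normal form on any two representatives and comparing. By Lemma \ref{normal form}(i), since $(\overline{Y}^+,\longrightarrow)$ is already noetherian by Lemma \ref{reduction systems}, it suffices to prove that the reduction system is confluent, and by Lemma \ref{normal form}(ii) confluence reduces to \emph{local} confluence. So the plan is to establish local confluence by analysing the overlaps of the reduction rules in $R$.

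First I would pin down what the rules and irreducibles look like for a semilattice. Here every pair $(e,f)$ is a basic pair, since $ef=fe$ is the meet and lies in $\{e,f\}$ whenever one is below the other, and in any case $ef=fe\in Y$; more relevantly, the rule $\overline{e}\overline{f}\longrightarrow \overline{ef}$ always applies to any adjacent pair of letters. Hence a word is irreducible precisely when no two adjacent letters can be multiplied, but in a semilattice \emph{every} adjacent pair reduces, so in fact the irreducible words are exactly the single letters $\overline{e}$. This already strongly suggests that the normal form of a word $\overline{e_1}\cdots\overline{e_n}$ is the single letter $\overline{e_1\cdots e_n}$, i.e. the meet of all the letters.

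For local confluence, I would take a word $w$ and two single-step reductions $w\longrightarrow x$ and $w\longrightarrow y$, each replacing some adjacent pair of letters by their product. There are two cases according to whether the two rewritten pairs are disjoint or overlapping. In the disjoint case the two rewrites commute: applying each to the other's result yields a common word $z$ directly, so $x\longrightarrow z$ and $y\longrightarrow z$ in one step each. The overlapping case is where two rules share a middle letter, say $w=\cdots\overline{e}\,\overline{f}\,\overline{g}\cdots$ with $x$ obtained by rewriting $\overline{e}\,\overline{f}\mapsto\overline{ef}$ and $y$ by rewriting $\overline{f}\,\overline{g}\mapsto\overline{fg}$; one then continues reducing both $x$ and $y$ and checks they converge. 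Concretely $x=\cdots\overline{ef}\,\overline{g}\cdots\longrightarrow\cdots\overline{efg}\cdots$ and $y=\cdots\overline{e}\,\overline{fg}\cdots\longrightarrow\cdots\overline{efg}\cdots$, where the common value is forced by associativity and commutativity of the meet in $Y$. Resolving this single overlap (together with the trivial disjoint case) gives local confluence, hence confluence, hence a unique normal form.

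The main obstacle, such as it is, lies in the overlapping case: one must verify that the resulting letter is genuinely the same idempotent on both branches, which is exactly where the semilattice hypothesis (associativity and commutativity of the product in $Y$) is used, and one should be slightly careful that after a rewrite the new single letter $\overline{ef}$ can itself still form a basic pair with its neighbours so that the reductions can proceed. In a semilattice this is automatic because all products are basic, so no side conditions on applicability can fail. Finally, for the word problem: given $u,v\in\overline{Y}^+$, reduce each to its normal form (a single letter) using the noetherian, confluent system; they represent the same element of $\ig(Y)$ if and only if these normal forms coincide, and since the reduction terminates in finitely many steps this yields the desired algorithm.
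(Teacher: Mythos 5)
There is a genuine and fatal gap at the very start of your argument: you have misidentified the basic pairs in a semilattice. The defining relations of $\ig(E)$ impose $\bar{e}\bar{f}=\overline{ef}$ only when $\{e,f\}\cap\{ef,fe\}\neq\emptyset$; in a semilattice, where $ef=fe$ is the meet, this says $ef\in\{e,f\}$, i.e.\ that $e$ and $f$ are \emph{comparable}. It is not enough that $ef$ lies in $Y$ -- if $e$ and $f$ are incomparable, then $ef$ is strictly below both, $(e,f)$ is not a basic pair, and the rewrite $\bar{e}\,\bar{f}\longrightarrow\overline{ef}$ simply does not exist in the reduction system. Consequently your claim that ``every adjacent pair reduces'' and that the irreducible words are exactly the single letters is false: the irreducible words are precisely those $\overline{x_1}\cdots\overline{x_n}$ in which consecutive letters are incomparable, and these can have arbitrary length. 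Indeed, if your claim were correct, every element of $\ig(Y)$ would equal a single letter and $\ig(Y)$ would be isomorphic to $Y$; this contradicts Example \ref{ex:semilattice} of the paper (due to Brittenham, Margolis and Meakin), where $Y$ is a three-element semilattice with two incomparable elements $e,f$ above $g$, and $\ig(Y)$ is infinite and not regular, containing the pairwise distinct elements $(\bar{e}\bar{f})^n$ for all $n$.

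The error propagates through your confluence analysis. In the overlap case $\bar{e}\,\bar{f}\,\bar{g}$ you resolve both branches by reducing to $\overline{efg}$, but this step requires $(ef,g)$ (respectively $(e,fg)$) to be basic, which can fail even when $(e,f)$ and $(f,g)$ are basic: take $e\leq f$, $g\leq f$ with $e,g$ incomparable, so $ef=e$, $fg=g$, and $\bar{e}\,\bar{g}$ is already irreducible. The correct argument (as in the paper) splits the overlap into the four cases $e\leq f\leq g$, $e\geq f\geq g$, $e\leq f,\ f\geq g$ and $e\geq f,\ f\leq g$, and observes that in each case both branches converge -- sometimes to a shorter word such as $\bar{e}$, $\bar{g}$ or $\bar{f}$, and in the third case both one-step reductions already produce the identical word $\bar{e}\,\bar{g}$, with no further reduction needed or available. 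The noetherian property, the appeal to Lemma \ref{normal form}, and the deduction of solvability of the word problem from uniqueness of normal forms are all fine in your write-up, but the heart of the proof (what the rules are, what the normal forms are, and how overlaps resolve) must be redone with the correct notion of basic pair.
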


\begin{proof}
By Lemma \ref{normal form}, to show the required result we only need to argue that $(\overline{Y}^+, \longrightarrow)$ is locally confluent. For this purpose, it is sufficient to consider an arbitrary word of length 3, say $\overline{e}~\overline{f}~\overline{g}\in \overline{Y}^+$, where $e,f$ and $f,g$ are comparable. There are four cases, namely, $e\leq f\leq g$, $e\geq f\geq g,$ $e\leq f, f\geq g$ and $e\geq f, f\leq g$, for which we have the following 4 diagrams:
\begin{center}
\begin{tikzpicture}
\node (w1) at (0,1) {$\overline{e}~\overline{f}~\overline{g}$};
\node (x1) at (-1,0) {$\overline{e}~\overline{g}$};
\node (y1) at (1,0) {$\overline{e}~\overline{f}$};
\node (z1) at (0,-1) {$\overline{e}$};
\node (w2) at (3.5,1) {$\overline{e}~\overline{f}~\overline{g}$};
\node (x2) at (2.5,0) {$\overline{f}~\overline{g}$};
\node (y2) at (4.5,0) {$\overline{e}~\overline{g}$};
\node (z2) at (3.5,-1) {$\overline{g}$};
\node (w3) at (7,1) {$\overline{e}~\overline{f}~\overline{g}$};
\node (x3) at (6,0) {$\overline{e}~\overline{g}$};
\node (y3) at (8,0) {$\overline{e}~\overline{g}$};
\node (z3) at (7,-1) {$\overline{e}~\overline{g}$};
\node (w4) at (10.5,1) {$\overline{e}~\overline{f}~\overline{g}$};
\node (x4) at (9.5,0) {$\overline{f}~\overline{g}$};
\node (y4) at (11.5,0) {$\overline{e}~\overline{f}$};
\node (z4) at (10.5,-1) {$\overline{f}$};
\path[->,font=\scriptsize,>=angle 60]
(w1) edge node[above]{} (x1)
(w1) edge node[right]{} (y1)
(x1) edge node[right]{} (z1)
(y1) edge node[above]{} (z1)
(w2) edge node[above]{} (x2)
(w2) edge node[right]{} (y2)
(x2) edge node[right]{} (z2)
(y2) edge node[above]{} (z2)
(w3) edge node[above]{} (x3)
(w3) edge node[right]{} (y3)
(x3) edge node[right]{*} (z3)
(y3) edge node[above]{*} (z3)
(w4) edge node[above]{} (x4)
(w4) edge node[right]{} (y4)
(x4) edge node[right]{} (z4)
(y4) edge node[above]{} (z4);
\end{tikzpicture}
\end{center}

\noindent Thus $(\overline{Y}^+, \longrightarrow)$ is locally confluent, so that every element in $\ig(Y)$ has a unique normal form.

Note that an element $\overline{x_1}~\cdots~\overline{x_n}\in \ig(Y)$ is in normal form if and only if $x_i$ and $x_{i+1}$ are incomparable, for all $i\in [1, n-1].$ By uniqueness of normal forms in $\ig(Y),$  two words of $\ig(B)$ are equal if and only if the corresponding normal forms  are identical  in $\overline{E}^+$, and hence the word problem of $\ig(Y)$ is solvable.\end{proof}

\begin{pro}\label{semilattices}
The free idempotent generated semigroup $\ig(Y)$ over a semilattice $Y$ is an abundant semigroup.
\end{pro}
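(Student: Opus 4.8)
The plan is to show that $\ig(Y)$ is abundant by exhibiting, for each element, an idempotent in its \els-class and one in its \ars-class. By Lemma \ref{uniqueness}, each element of $\ig(Y)$ has a unique normal form $\overline{x_1}\cdots\overline{x_n}$ in which consecutive entries $x_i, x_{i+1}$ are incomparable. The natural candidate idempotents are $\overline{x_n}$ for the \els-class and $\overline{x_1}$ for the \ars-class. By symmetry (duality between \els and \ars), I would concentrate on the \els-side and show $\overline{x_1}\cdots\overline{x_n}\,\els\,\overline{x_n}$.

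First I would verify the two conditions of Lemma \ref{fountain}. Writing $w=\overline{x_1}\cdots\overline{x_n}$ and $e=\overline{x_n}$, the condition $we=w$ amounts to showing $\overline{x_1}\cdots\overline{x_n}\,\overline{x_n}=\overline{x_1}\cdots\overline{x_n}$ in $\ig(Y)$, which is immediate since $x_nx_n=x_n$ is a basic product, so $\overline{x_n}\,\overline{x_n}=\overline{x_n}$ holds already in the reduction system. The substantive condition is the second clause of Lemma \ref{fountain}: for all $u,v\in \ig(Y)^1$, if $wu=wv$ then $eu=ev$. The cleanest route is to pass to normal forms and use their uniqueness: I would show that the normal form of $wu$ determines, and is determined in a controlled way by, the normal form of $eu=\overline{x_n}\,u$. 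Concretely, appending a word $u=\overline{y_1}\cdots\overline{y_m}$ on the right and reducing only affects the suffix of the normal form, and the reduction interacts with $u$ in exactly the same way whether the prefix is all of $w$ or just the single letter $\overline{x_n}$, because in a semilattice the reduction $\overline{a}\,\overline{b}\to\overline{ab}$ with $a,b$ comparable collapses to the smaller element and depends only on the immediately adjacent letters.

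The key technical lemma I would isolate is therefore a statement about how right-multiplication and normal-form reduction commute with truncation: if $wu$ reduces to a normal form, the portion of that normal form contributed by $u$ (after the `boundary' letter $x_n$ interacts with the front of $u$) is the same as the portion contributed when we start from $\overline{x_n}u$. From this, $wu=wv$ (equivalently, equal normal forms) forces the boundary interactions to coincide and hence $\overline{x_n}u=\overline{x_n}v$. This yields $w\,\els\,\overline{x_n}$, and dually $w\,\ars\,\overline{x_1}$, so every \els-class and every \ars-class of $\ig(Y)$ contains an idempotent, giving abundancy. Since idempotents of $\ig(Y)$ form a semilattice (a copy of $Y$ via the isomorphism $\varphi|_{\overline{Y}}$), we also obtain adequacy.

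The main obstacle I anticipate is the implication clause of Lemma \ref{fountain}, that is, controlling how the normal form of $wu$ reflects the normal form of $\overline{x_n}u$ across the join $w\,u$. The reduction is local, but one must be careful that cancellation happening deep inside $u$ (after $u$'s own letters reduce among themselves) is faithfully mirrored when the long prefix $\overline{x_1}\cdots\overline{x_{n-1}}$ is deleted; the argument hinges on the fact that in the semilattice case a letter only ever interacts with its immediate neighbours and always survives as the meet, so no `long-range' dependence can arise. Formalising this locality precisely, rather than the bookkeeping of normal forms itself, is where the care is needed.
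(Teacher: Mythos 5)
Your overall strategy is the same as the paper's: unique normal forms (Lemma \ref{uniqueness}), the characterisation in Lemma \ref{fountain}, the last (dually, first) letter of the normal form as the candidate idempotent, and an analysis of how products of normal forms reduce; indeed your target implication $wu=wv\Rightarrow\overline{x_n}u=\overline{x_n}v$ is exactly the dual of the one the paper proves. However, there is a genuine gap at the one step that carries all the difficulty, and the locality reason you offer for it is false. You claim that in a semilattice "a letter only ever interacts with its immediate neighbours and always survives as the meet, so no long-range dependence can arise". In a reduction $\overline{a}\,\overline{b}\rightarrow\overline{ab}$ of a comparable pair the \emph{larger} letter disappears entirely, and this cascades: if the normal form of $\overline{x_n}u$ begins with a letter $c<x_n$, then $c$ absorbs not just $x_n$ but a whole suffix of $\overline{x_1}\cdots\overline{x_{n-1}}$, since each $x_i$ met in the process that is comparable with $c$ must satisfy $x_i>c$ (for $x_i\leq c<x_{i+1}$ would make consecutive letters of the normal form of $w$ comparable) and hence is deleted. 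So right multiplication does act arbitrarily deep inside the prefix, and for two different factors $u$ and $v$ the boundary between the surviving prefix of $w$ and the part contributed by $u$ (resp.\ $v$) may a priori sit at \emph{different} positions of the common normal form of $wu=wv$. Ruling this out is precisely the content of the proposition; it is the case the paper labours over in its induction (the case $y_1\not\leq x_n$, $y_1\not\leq z_k$ with, say, $x_n<y_1$, where it needs the count $n-t=k-r$ and the observation that $x_n=y_t$ forces $t=1$, contradicting $x_n<y_1$). As written, your proposal asserts the conclusion of this case rather than proving it.

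The good news is that your plan can be completed, even without the paper's induction, by strengthening your "key technical lemma" so that it records where the boundary can sit: writing $N_u$ for the normal form of $\overline{x_n}u$ and $c_u$ for its first letter, one shows that the normal form of $wu$ is the word $\overline{x_1}\cdots\overline{x_s}\cdot N_u$ for some $0\leq s\leq n-1$, where either $s=n-1$ or every deleted letter $x_{s+1},\dots,x_{n-1}$ is strictly greater than $c_u$ (this is exactly the absorption analysis above). Now suppose $wu=wv$, with boundaries $s$ and $s'$ and, without loss of generality, $s\leq s'$. If $s<s'$ then $s<n-1$, so $x_{s+1}>c_u$; but comparing the $(s+1)$st letters of the two decompositions of the unique common normal form gives $c_u=x_{s+1}$, a contradiction. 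Hence $s=s'$, the prefixes cancel as words, $N_u=N_v$, and $\overline{x_n}u=\overline{x_n}v$ as required. Note that what makes this work is the strictness of the inequalities for absorbed letters combined with incomparability of consecutive letters in a normal form — a comparability-and-position argument of exactly the paper's kind — and not any locality of the rewriting system.
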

\begin{proof}
Let $\overline{x_{1}}\cdots~\overline{x_{n}}, \overline{y_{1}}\cdots~\overline{y_{m}}\in \ig(B)$  be in normal form. We begin with considering the product $(\overline{x_{1}}\cdots~\overline{x_{n}})(\overline{y_{1}}\cdots~\overline{y_{m}})$. Either $x_n,y_1$ are incomparable, $x_n\geq y_1$ or $x_n\leq y_1$. In the first case it is clear that $\overline{x_{1}}\cdots~\overline{x_{n}}~\overline{y_{1}}\cdots~\overline{y_{m}}$ is  a normal form. If $x_n\geq y_1$, then either $\overline{x_{1}}\cdots~\overline{x_{n-1}}~\overline{y_{1}}\cdots~\overline{y_{m}}$ is in normal form, or $y_1$ and $x_{n-1}$ are comparable. If $y_1$ and $x_{n-1}$ are comparable, then $y_1< x_{n-1}$, for we cannot have $x_{n-1}\leq y_1$ else $x_{n-1}\leq x_n$, a contradiction. Continuing in this manner we obtain  $(\overline{x_{1}}\cdots~\overline{x_{n}})(\overline{y_{1}}\cdots~\overline{y_{m}})$ has normal form $\overline{x_{1}}\cdots~\overline{x_{t-1}}~\overline{y_{1}}\cdots~\overline{y_{m}}$, where $1\leq t\leq n,$ $x_n, \cdots,x_t\geq y_1,$ and either $t=1$ (in which case $\overline{x_1}~\cdots~\overline{x_{t-1}}$ is the empty product) or $x_{t-1},y_1$ are incomparable. Similarly, if $x_n\leq y_1$, then $(\overline{x_{1}}\cdots~\overline{x_{n}})(\overline{y_{1}}\cdots~\overline{y_{m}})$ has normal form $\overline{x_{1}}\cdots~\overline{x_{n}}~\overline{y_{t+1}}\cdots~\overline{y_{m}}$, where $1\leq t\leq m,$ $x_n\leq y_1,\cdots y_t,$ and $t=m$ or $x_n, y_{t+1}$ are incomparable.

\medskip

Suppose now that $\overline{x_{1}}\cdots~\overline{x_{n}}, \overline{z_{1}}\cdots~\overline{z_{k}}$ and $\overline{y_{1}}\cdots~\overline{y_{m}}\in \ig(Y)$ are in normal form such that $$\overline{x_{1}}\cdots~\overline{x_{n}}~\overline{y_{1}}\cdots~\overline{y_{m}}=\overline{z_{1}}
\cdots~\overline{z_{k}}~\overline{y_{1}}\cdots~\overline{y_{m}}$$ 
in $\ig(Y).$ Here we assume $n,k\geq 0$ and $m\geq 1$. We proceed  to prove that \begin{equation}\label{eqn3.1}\overline{x_{1}}\cdots~\overline{x_{n}}~\overline{y_{1}}=\overline{z_{1}}
\cdots~\overline{z_{k}}~\overline{y_{1}}\end{equation} in $\ig(Y).$ If $n=k=0$ there is nothing to show. Note that the result is clearly true if $m=1$, so in what follows we assume $m\geq 2.$

\medskip

First we assume that $n\geq 1$ and $k=0$ (i.e. $\overline{z_{1}}\cdots~\overline{z_{k}}$ is empty), so that $$\overline{x_{1}}\cdots~\overline{x_{n}}~\overline{y_{1}}\cdots~\overline{y_{m}}=\overline{y_{1}}\cdots~\overline{y_{m}}.$$ In view of Lemma \ref{uniqueness}, $x_n$ and $y_1$ must be comparable. If $x_n\geq y_1$, then it follows from the above observation that $y_1\leq x_1,\cdots,x_n$, so that $\overline{x_{1}}\cdots~\overline{x_{n}}~\overline{y_{1}}=\overline{y_1}$. On the other hand, if $x_n\leq y_1$, then $$\overline{x_{1}}\cdots~\overline{x_{n}}~\overline{y_{t+1}}\cdots~\overline{y_{m}}=\overline{y_{1}}\cdots~\overline{y_{m}}$$ for $1\leq t\leq m$ such that $x_n\leq y_1,\cdots, y_t$ and $t=m$ or $x_n, y_{t+1}$ are incomparable. Then $x_n=y_t,$ so that to avoid the contradiction $y_t\leq y_{t-1}$ we  must have $t=1$. Clearly then $n=1$ and $x_1=x_n=y_1$ so that $\overline{x_1}~\overline{y_1}=\overline{y_1}$. Hence (\ref{eqn3.1}) certainly holds for $n+k+m\leq 3$.

\medskip

Suppose that $n+k+m\geq 4$ and the result is true for all $n'+k'+m'<n+k+m$. Recall we are assuming that $m\geq 2$ and in view of the above we may take $n,k
\geq 1$.

\medskip

If $x_n,y_1$ and $z_k, y_1$ are incomparable pairs, then it follows from uniqueness of normal form that $k=n$ and $\overline{x_{1}}\cdots~\overline{x_{n}}~\overline{y_{1}}=\overline{z_{1}}
\cdots~\overline{z_{k}}~\overline{y_{1}}.$

\medskip

Suppose now that $y_1\leq x_n$. Then $$\overline{x_{1}}\cdots~\overline{x_{n-1}}~\overline{y_{1}}\cdots~\overline{y_{m}}=\overline{z_{1}}
\cdots~\overline{z_{k}}~\overline{y_{1}}\cdots~\overline{y_{m}}$$ so that our induction gives us $$\overline{x_{1}}\cdots~\overline{x_{n-1}}~\overline{y_{1}}=\overline{z_{1}}
\cdots~\overline{z_{k}}~\overline{y_{1}}$$ and hence $\overline{x_{1}}\cdots~\overline{x_{n}}~\overline{y_{1}}=\overline{z_{1}}
\cdots~\overline{z_{k}}~\overline{y_{1}}.$ A similar result holds for the case $y_1\leq z_k$.

\medskip

Suppose now that $y_1\not \leq x_n$ and $y_1\not \leq z_k$ and at least one of $x_n, y_1$ or $z_k,y_1$ are comparable. Without loss of generality assume that  $x_n< y_1$. As above $x_n\leq y_1,\cdots,y_t$ for some $1\leq t\leq m$ with $t=m$ or $x_n, y_{t+1}$ incomparable. Further, there is an $r$ with $0\leq r\leq m$ such that $z_k\leq y_1,\cdots,y_r$ and $r=m$ or $z_k, y_{r+1}$ incomparable. Thus both sides of $$\overline{x_{1}}\cdots~\overline{x_{n}}~\overline{y_{t+1}}\cdots~\overline{y_{m}}=\overline{z_{1}}
\cdots~\overline{z_{k}}~\overline{y_{r+1}}\cdots~\overline{y_{m}}$$ are in normal form and so $n-t=k-r$. If $n>k$, then $r<t$, so $x_n=y_t$. To avoid the contradiction $y_t\leq y_{t-1}$, we must have $t=1$, but then $x_n=y_1$ a contradiction. Similarly, we can not have $k>n$. Hence $n=k,$ and hence $\overline{x_{1}}\cdots~\overline{x_{n}}=\overline{z_{1}}
\cdots~\overline{z_{k}},$ so that certainly $\overline{x_{1}}\cdots~\overline{x_{n}}~\overline{y_{1}}=\overline{z_{1}}
\cdots~\overline{z_{k}}~\overline{y_{1}}$  as required.
\end{proof}

We remark here that Proposition \ref{semilattices} can also be obtained as a corollary of Proposition \ref{simple normal band}, but for the sake of our readers, we have proved this special case to outline our strategy in a simple case.

\begin{coro}
The free idempotent generated semigroup $\ig(Y)$ over a semilattice $Y$ is an adequate semigroup.
\end{coro}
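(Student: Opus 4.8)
The goal is to show that $\ig(Y)$ over a semilattice $Y$ is adequate. Since adequacy means abundant together with commuting idempotents, and Proposition~\ref{semilattices} already gives abundancy, the plan is simply to verify that the idempotents of $\ig(Y)$ commute and form a semilattice. This should follow quickly from facts established earlier in the excerpt, so the corollary is genuinely a short consequence rather than a fresh argument.

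First I would recall that by the discussion in Section~\ref{sec:intro}, $E(\ig(E))=\overline{E}$ and the restriction $\varphi|_{\overline{E}}:\overline{E}\longrightarrow E$ is an isomorphism of biordered sets, for any biordered set $E$. Applying this with $E=Y$, the idempotents of $\ig(Y)$ are exactly the elements $\overline{e}$ with $e\in Y$. Thus to check that idempotents commute it suffices to compute $\overline{e}\,\overline{f}$ and $\overline{f}\,\overline{e}$ for $e,f\in Y$.

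Next I would use that in a semilattice every pair $e,f$ is comparable in the sense that $ef=fe$ is their meet, and in particular $\{e,f\}\cap\{ef,fe\}\neq\emptyset$ holds automatically since, say, $ef\le e$ and $ef\le f$ give $e\cdot f=ef$ and $f\cdot e=fe$ with $ef=fe$; more to the point, the defining relations of $\ig(Y)$ identify $\overline{e}\,\overline{f}$ with $\overline{ef}$ whenever $(e,f)$ is a basic pair. Since $Y$ is commutative, every pair $(e,f)$ is basic (as $ef=fe$ lies in $\{ef,fe\}$), so $\overline{e}\,\overline{f}=\overline{ef}=\overline{fe}=\overline{f}\,\overline{e}$ in $\ig(Y)$. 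Hence the idempotents commute. Because $\varphi|_{\overline{E}}$ is an isomorphism of biordered sets onto the semilattice $Y$, the idempotents in fact form a semilattice under the induced order. Combining this with the abundancy from Proposition~\ref{semilattices} yields that $\ig(Y)$ is adequate.

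The main point to get right is the identification of idempotents and the verification that products of idempotents reduce correctly; once one observes that commutativity of $Y$ forces every pair to be a basic pair, the commuting of idempotents in $\ig(Y)$ is immediate and there is no real obstacle. I expect the whole proof to be one or two sentences, invoking Proposition~\ref{semilattices} for abundancy and the biordered-set isomorphism $E(\ig(Y))\cong Y$ for the semilattice structure of the idempotents.
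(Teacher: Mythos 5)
Your outline matches the paper's proof at the top level (Proposition~\ref{semilattices} for abundancy, plus the identification $E(\ig(Y))=\overline{Y}$ via the biordered set isomorphism), but your explicit verification that idempotents commute contains a genuine error. You assert that ``every pair $(e,f)$ is basic (as $ef=fe$ lies in $\{ef,fe\}$)''; this misreads the defining condition. A pair $(e,f)$ is basic when $\{e,f\}\cap\{ef,fe\}\neq\emptyset$, i.e.\ when the product $ef$ (which equals $fe$ in a semilattice) is equal to $e$ or to $f$ --- in a semilattice this happens precisely when $e$ and $f$ are \emph{comparable}, not for all pairs. For incomparable $e,f$ no relation $\overline{e}\,\overline{f}=\overline{ef}$ is imposed in the presentation of $\ig(Y)$, and in fact $\overline{e}\,\overline{f}\neq\overline{f}\,\overline{e}$: both words are in normal form (consecutive letters incomparable) and are distinct words, so by Lemma~\ref{uniqueness} they are distinct elements of $\ig(Y)$. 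The paper's own Example~\ref{ex:semilattice} exhibits exactly this phenomenon: for $Y=\{e,f,g\}$ with $e,f\geq g$ incomparable, the elements $(\overline{e}\,\overline{f})^n$ and $(\overline{f}\,\overline{e})^n$ are distinct and not idempotent, so $\overline{Y}$ is not even a subsemigroup of $\ig(Y)$. Thus your claimed commutation fails for every semilattice that is not a chain, and that step cannot be repaired.

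The paper's proof deliberately avoids the computation you attempt. It invokes only the fact, recalled in the Introduction (due to Easdown), that the biordered set of idempotents of $\ig(Y)$ is isomorphic to $Y$, which is a semilattice, and reads the adequacy condition ``the idempotents form a semilattice'' at the level of this biordered-set structure rather than as closure and commutativity of $\overline{Y}$ under the multiplication of $\ig(Y)$; that is the only reading compatible with Example~\ref{ex:semilattice}. So the divergence is not stylistic: your version proves a strictly stronger statement (multiplicative commutation of $E(\ig(Y))$) which is false, while the paper's one-line argument rests on the biordered-set isomorphism. Note that commutation does hold for the basic pairs --- if $e\leq f$ then $\overline{e}\,\overline{f}=\overline{ef}=\overline{e}=\overline{fe}=\overline{f}\,\overline{e}$ --- but that is as far as a product computation in $\ig(Y)$ can take you.
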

\begin{proof}
We have already  remarked in the beginning of Introduction that the biordered set of idempotents of $\ig(Y)$ is isomorphic to $Y$, which is a semilattice, so that $\ig(Y)$ is an adequate semigroup.
\end{proof}

\begin{ex} \cite[Example 2]{brittenham:2009} \label{ex:semilattice} {\rm Let  $Y=\{e,f,g\}$ be the semilattice with $e, f\geq g$ and $e, f$ incomparable. Then $\ig(Y)$ is not regular.}\end{ex}

\begin{proof}
  First, we observe that $$\ig(Y)=\{\overline{e},\overline{f}, \overline{g}, (\overline{e}~\overline{f})^n, (\overline{f}~\overline{e})^n, (\overline{e}~\overline{f})^n~\overline{e}, (\overline{f}~\overline{e})^n~\overline{f}:~n\in \mathbb{N}\}.$$ It is easy to check that for any $n\in \mathbb{N}$, $(\overline{e}~\overline{f})^n\in \ig(Y)$ is not regular, as for any $w\in \ig(Y),$ $(\overline{e}~\overline{f})^n w (\overline{e}~\overline{f})^n =\overline{g}$ if $w$ contains $\overline{g}$ as a letter; otherwise $(\overline{e}~\overline{f})^n w (\overline{e}~\overline{f})^n=(\overline{e}~\overline{f})^m$ for some $m\geq 2n\in \mathbb{N}.$  Therefore, $\ig(Y)$ is not a regular semigroup.\end{proof}

On the other hand, by Proposition \ref{semilattices} we have that $\ig(Y)$ is an abundant semigroup. Furthermore, $$\mathcal{R}^*=\{\{\overline{e},(\overline{e}~\overline{f})^n,(\overline{e}~\overline{f})^n~
\overline{e}:~n\in \mathbb{N} \}, \{\overline{f},(\overline{f}~\overline{e})^n,(\overline{f}~\overline{e})^n~\overline{f}:~n\in \mathbb{N} \}, \{\overline{g}\}\}$$ and $$\mathcal{L}^*=\{\{\overline{e},(\overline{f}~\overline{e})^n,(\overline{e}~\overline{f})^n~
\overline{e}:~n\in \mathbb{N} \}, \{\overline{f},(\overline{e}~\overline{f})^n,(\overline{f}~\overline{e})^n~\overline{f}:~n\in \mathbb{N} \}, \{\overline{g}\}\}$$
Note that we have $$\mathcal{D}^*=\mathcal{L}^*\circ \mathcal{R}^*=\mathcal{R}^*\circ \mathcal{L}^*$$ in $\ig(Y),$ and there are two $\mathcal{D}^*$-classes of $\ig(Y),$ namely, $\{\overline{g}\}$ and $\ig(Y)\setminus \{\overline{g}\}$,  the latter of which can be depicted by the following so called {\it egg-box} picture:

\[\begin{array}{|l|l|}
\hline 
\overline{e}, (\overline{e}~\overline{f})^n\overline{e} &  (\overline{e}~\overline{f})^n\\ \hline
 (\overline{f}~\overline{e})^n& \overline{f}, (\overline{f}~\overline{e})^n\overline{f}\\ \hline
\end{array}\]

\section{Free idempotent generated semigroups over rectangular bands}\label{sec:rectangular bands}

In this section we are concerned with the free idempotent generated semigroup $\ig(B)$ over a rectangular band $B.$ Recall from \cite{howie:1995} that a band $B$ is a semilattice $Y$ of rectangular bands $B_\alpha$, $\alpha\in Y$, where the bands $B_\alpha$ are the $\mathcal{D}=\mathcal{J}$-classes of $B$. Thus $B=\bigcup_{\alpha\in Y} B_\alpha$ where each $B_\alpha$ is a rectangular band and $B_\alpha B_\beta\subseteq B_{\alpha\beta},$ for all $ \alpha, \beta\in Y.$ At times we will use this notation without specific comment. We show that if $B$ is  a rectangular band, then $\ig(B)$ is  regular. It follows that if  $B$ is a semilattice $Y$ of rectangular bands $B_\alpha$, $\alpha\in Y,$ then any word in $\overline{B_\alpha}^+$ is regular in $\ig(B).$

\begin{lem}\label{rect-unique}
Let $B$ be a rectangular band. Then every element in $\ig(B)$ has a unique normal form.
\end{lem}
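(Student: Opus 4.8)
The plan is to follow the same route as Lemma~\ref{uniqueness}: realise $\ig(B)$ as the set of $\overset{*}{\longleftrightarrow}$-classes of the reduction system $(\overline{B}^+,\longrightarrow)$ of Lemma~\ref{reduction systems}, which is already known to be noetherian, and then invoke Lemma~\ref{normal form}. By part~(ii) of that lemma it is enough to prove that $(\overline{B}^+,\longrightarrow)$ is locally confluent; part~(i) then yields that every $\overset{*}{\longleftrightarrow}$-class, equivalently every element of $\ig(B)$, contains a unique normal form.

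To establish local confluence I would first reduce to the critical overlaps. Each reduction rewrites a length-two factor $\overline{e}\,\overline{f}$, with $(e,f)$ a basic pair, to the single letter $\overline{ef}$. If two applicable reductions in a word act on disjoint factors they commute, so a common reduct is immediate; the only genuine overlaps occur inside a length-three factor $\overline{e}\,\overline{f}\,\overline{g}$ in which both $(e,f)$ and $(f,g)$ are basic pairs. Hence it suffices to show that for every such word the two reducts $\overline{ef}\,\overline{g}$ and $\overline{e}\,\overline{fg}$ possess a common descendant.

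For the case check I would coordinatise, writing $B=I\times\Lambda$ with $(a,b)(c,d)=(a,d)$, and recall that a pair is basic exactly when the two idempotents share a coordinate (sharing the first coordinate gives $\mathcal{R}$, the second $\mathcal{L}$). Put $e=(i_1,\lambda_1)$, $f=(i_2,\lambda_2)$, $g=(i_3,\lambda_3)$, so that $ef=(i_1,\lambda_2)$, $fg=(i_2,\lambda_3)$ and $efg=(i_1,\lambda_3)$. The two basic-pair hypotheses give four combinations of coinciding coordinates, and in each I would exhibit a common descendant: when $i_1=i_2=i_3$, or when $i_1=i_2$ and $\lambda_2=\lambda_3$, both branches reduce to $\overline{efg}$; when $\lambda_1=\lambda_2$ and $i_2=i_3$ one has $ef=e$ and $fg=g$, so both branches are literally the word $\overline{e}\,\overline{g}$; and when $\lambda_1=\lambda_2=\lambda_3$ one has $ef=e$ and $fg=f$, whence both branches reduce, via $eg=e$ and $ef=e$, to $\overline{e}$. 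This exhausts the overlaps and proves local confluence.

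The argument is largely mechanical, and I expect the only point requiring care to be the observation that the two competing reductions need not meet at $\overline{efg}$: in the cases where a product such as $ef$ collapses back to $e$, the expected second reduction on one branch is unavailable, and instead the two branches coincide at an unreduced (possibly further reducible) word. Keeping track of when a basic product equals one of its factors, rather than tacitly assuming that every length-three word reduces all the way to a single letter, is therefore the step most easily mishandled; once the four combinations are organised around this distinction, the confluence check, and hence the conclusion via Lemma~\ref{normal form}, is routine.
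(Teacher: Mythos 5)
Your proposal is correct and follows essentially the same route as the paper: both appeal to Lemma \ref{normal form} for the noetherian system $(\overline{B}^+,\longrightarrow)$ and verify local confluence on length-three overlaps $\overline{e}\,\overline{f}\,\overline{g}$, your four coordinate cases ($i_1=i_2=i_3$; $i_1=i_2,\ \lambda_2=\lambda_3$; $\lambda_1=\lambda_2,\ i_2=i_3$; $\lambda_1=\lambda_2=\lambda_3$) being exactly the paper's cases $e\,\mathcal{R}\,f\,\mathcal{R}\,g$, $e\,\mathcal{R}\,f\,\mathcal{L}\,g$, $e\,\mathcal{L}\,f\,\mathcal{R}\,g$ and $e\,\mathcal{L}\,f\,\mathcal{L}\,g$ rewritten via the coordinatisation $B=I\times\Lambda$. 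Your closing caution about branches meeting at a possibly unreduced word such as $\overline{e}\,\overline{g}$ corresponds precisely to the starred arrows in the paper's third diagram.
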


\begin{proof}
We have already remarked that the reduction system $(\overline{B}^+,\longrightarrow)$ induced by $\ig(B)$ is noetherian,  so that, according to Lemma \ref{normal form}, to demonstrate the uniqueness of normal form of elements in $\ig(B),$ we only need to prove that $(\overline{B}^+,\longrightarrow)$ is locally confluent.

For this purpose, it is sufficient to consider an arbitrary word of length 3, say $\overline{e}~\overline{f}~\overline{g}\in \overline{B}^+$, where $e,f$  and $f,g$ are comparable. Clearly, there are four cases, namely, $e~\mathcal{L}~f~\mathcal{L}~g$, $e~\mathcal{R}~f~\mathcal{R}~g,$ $e~\mathcal{L}~f~\mathcal{R}~g$ and $e~\mathcal{R}~f~\mathcal{L}~g$. Then we have the following 4 diagrams:
\begin{center}
\begin{tikzpicture}
\node (w1) at (0,1) {$\overline{e}~\overline{f}~\overline{g}$};
\node (x1) at (-1,0) {$\overline{e}~\overline{g}$};
\node (y1) at (1,0) {$\overline{e}~\overline{f}$};
\node (z1) at (0,-1) {$\overline{e}$};
\node (w2) at (3.5,1) {$\overline{e}~\overline{f}~\overline{g}$};
\node (x2) at (2.5,0) {$\overline{f}~\overline{g}$};
\node (y2) at (4.5,0) {$\overline{e}~\overline{g}$};
\node (z2) at (3.5,-1) {$\overline{g}$};
\node (w3) at (7,1) {$\overline{e}~\overline{f}~\overline{g}$};
\node (x3) at (6,0) {$\overline{e}~\overline{g}$};
\node (y3) at (8,0) {$\overline{e}~\overline{g}$};
\node (z3) at (7,-1) {$\overline{e}~\overline{g}$};
\node (w4) at (10.5,1) {$\overline{e}~\overline{f}~\overline{g}$};
\node (x4) at (9.5,0) {$\overline{f}~\overline{g}$};
\node (y4) at (11.5,0) {$\overline{e}~\overline{f}$};
\node (z4) at (10.5,-1) {$\overline{f}$};
\path[->,font=\scriptsize,>=angle 60]
(w1) edge node[above]{} (x1)
(w1) edge node[right]{} (y1)
(x1) edge node[right]{} (z1)
(y1) edge node[above]{} (z1)
(w2) edge node[above]{} (x2)
(w2) edge node[right]{} (y2)
(x2) edge node[right]{} (z2)
(y2) edge node[above]{} (z2)
(w3) edge node[above]{} (x3)
(w3) edge node[right]{} (y3)
(x3) edge node[right]{*} (z3)
(y3) edge node[above]{*} (z3)
(w4) edge node[above]{} (x4)
(w4) edge node[right]{} (y4)
(x4) edge node[right]{} (z4)
(y4) edge node[above]{} (z4);
\end{tikzpicture}
\end{center}

\noindent Hence $(B^*, R)$ is locally confluent.
\end{proof}

\begin{lem}\label{rectangular band}
Suppose that $B$ is a rectangular band and $\overline{u_1}~\cdots~\overline{u_n}\in \ig(B).$ Then we have $\overline{u_n}~\mathcal{L}~\overline{u_1}~\cdots~\overline{u_n}~\mathcal{R}~\overline{u_1},$ and hence $\ig(B)$ is a regular semigroup.
\end{lem}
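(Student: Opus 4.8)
The statement to prove is Lemma \ref{rectangular band}: for a rectangular band $B$ and any word $\overline{u_1}\cdots\overline{u_n}\in\ig(B)$, we have $\overline{u_n}~\mathcal{L}~\overline{u_1}\cdots\overline{u_n}~\mathcal{R}~\overline{u_1}$, and consequently $\ig(B)$ is regular.

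The plan is to prove the two Green's relations directly by exhibiting the appropriate one-sided divisibilities, exploiting the defining identities of a rectangular band, namely $xyx=x$ and (equivalently) $xy\,z = xz$ together with the fact that in a rectangular band every pair of elements is comparable under both $\mathcal{L}$ and $\mathcal{R}$, so that \emph{every} product $\bar e\,\bar f$ is a basic product and reduces via $\longrightarrow$. First I would establish the key one-step fact that for any $e,f\in B$ one has $\overline{u_1}\cdots\overline{u_n}~\mathcal{R}~\overline{u_1}$ by showing mutual divisibility: trivially $\overline{u_1}\cdots\overline{u_n}\in\overline{u_1}\,\ig(B)^1$, while for the reverse I use that in $B$ we have $u_1 = u_1 u_2\cdots u_n\, u_1$ (a rectangular-band identity, since $u_1 x u_1 = u_1$ for any $x$), which lifts to $\ig(B)$ because all the products involved are basic: hence $\overline{u_1}=\overline{u_1}\cdots\overline{u_n}\,\overline{u_1}\in \overline{u_1}\cdots\overline{u_n}\,\ig(B)^1$. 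This gives $\overline{u_1}~\mathcal{R}~\overline{u_1}\cdots\overline{u_n}$, and the dual argument using $u_n x u_n = u_n$ gives $\overline{u_n}~\mathcal{L}~\overline{u_1}\cdots\overline{u_n}$.

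The one subtle point, which I would treat carefully, is the lifting of the word-level identity $u_1\cdots u_n\,u_1 = u_1$ from $B$ to an equality of \emph{words} in $\ig(B)$. Since $B$ is a rectangular band, any two idempotents are $\mathcal{L}$- or $\mathcal{R}$-comparable, so consecutive letters always form a basic pair and $\overline{u_1}\cdots\overline{u_n}\,\overline{u_1}$ can be fully reduced under $\longrightarrow$; by Lemma \ref{rect-unique} the reduction is confluent with unique normal forms, and I would verify the normal form of $\overline{u_1}\cdots\overline{u_n}\,\overline{u_1}$ collapses to $\overline{u_1}$ by repeatedly applying $\bar x\,\bar y = \overline{xy}$ and the rectangular identities. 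Concretely, reducing from the right, $\overline{u_n}\,\overline{u_1}=\overline{u_n u_1}$, and continuing the rectangular-band absorption $u_i(u_{i+1}\cdots u_n u_1)=u_i u_1$ peels letters off until only $\overline{u_1}$ survives; the analogous left-to-right collapse handles the $\mathcal{L}$-statement.

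Finally, regularity follows immediately: from $\overline{u_1}\cdots\overline{u_n}~\mathcal{R}~\overline{u_1}$ and $\overline{u_1}\cdots\overline{u_n}~\mathcal{L}~\overline{u_n}$, the $\mathcal{R}$-class of $\overline{u_1}\cdots\overline{u_n}$ contains the idempotent $\overline{u_1}$ and its $\mathcal{L}$-class contains the idempotent $\overline{u_n}$; by the standard characterisation recalled in Section \ref{sec:pre} (a semigroup is regular precisely when every $\mathcal{L}$-class and every $\mathcal{R}$-class contains an idempotent), $\ig(B)$ is regular. I expect the main obstacle to be the bookkeeping in the word-level collapse — making sure the rectangular-band identities are applied in an order consistent with the basic-product structure so that each reduction step is legitimate in $\ig(B)$ — rather than any conceptual difficulty, since the whole argument rests on the single observation that all products are basic in the rectangular-band case.
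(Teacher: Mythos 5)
Your proposal contains a genuine gap, rooted in a false premise. You assert that in a rectangular band ``every pair of elements is comparable under both $\mathcal{L}$ and $\mathcal{R}$, so that \emph{every} product $\bar e\,\bar f$ is a basic product.'' This is wrong: writing $B=L\times R$ with $|L|,|R|\geq 2$, take $a=(1,1)$ and $d=(2,2)$; then $ad=(1,2)$ and $da=(2,1)$, so $\{a,d\}\cap\{ad,da\}=\emptyset$ and $(a,d)$ is \emph{not} a basic pair (in a rectangular band, $(e,f)$ is basic precisely when $e~\mathcal{L}~f$ or $e~\mathcal{R}~f$). Consequently your key step --- lifting the identity $u_1u_2\cdots u_nu_1=u_1$ from $B$ to the equality $\overline{u_1}~\overline{u_2}\cdots\overline{u_n}~\overline{u_1}=\overline{u_1}$ in $\ig(B)$ --- fails. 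Indeed, for the word $\overline{a}~\overline{d}~\overline{a}$ neither consecutive pair is basic, so this word is already in normal form; by the uniqueness of normal forms (Lemma \ref{rect-unique}, which you yourself invoke), $\overline{a}~\overline{d}~\overline{a}\neq\overline{a}$ in $\ig(B)$. The same example kills your proposed right-to-left collapse ``$\overline{u_n}\,\overline{u_1}=\overline{u_nu_1}$'', which again needs $(u_n,u_1)$ to be basic.

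The repair is the paper's argument, which never multiplies by $\overline{u_1}$: instead, right-multiply $\overline{u_1}\cdots\overline{u_n}$ by the single generator $\overline{u_nu_{n-1}}$, the image in $\overline{B}$ of the \emph{element} $u_nu_{n-1}\in B$. The pairs $(u_n,u_nu_{n-1})$ and $(u_{n-1},u_nu_{n-1})$ \emph{are} basic, since $u_n(u_nu_{n-1})=u_nu_{n-1}$ and $u_{n-1}(u_nu_{n-1})=u_{n-1}$, and these two legitimate reductions give $\overline{u_1}\cdots\overline{u_n}~\overline{u_nu_{n-1}}=\overline{u_1}\cdots\overline{u_{n-1}}$, whence $\overline{u_1}\cdots\overline{u_n}~\mathcal{R}~\overline{u_1}\cdots\overline{u_{n-1}}$; finite induction (stripping one letter at a time, so the witness is the product $\overline{u_nu_{n-1}}~\overline{u_{n-1}u_{n-2}}\cdots\overline{u_2u_1}$, not $\overline{u_1}$) yields $\overline{u_1}\cdots\overline{u_n}~\mathcal{R}~\overline{u_1}$, and dually for $\mathcal{L}$. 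Your final paragraph (idempotents in each $\mathcal{R}$- and $\mathcal{L}$-class imply regularity) is fine, but it rests on the broken step.
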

\begin{proof}
Let $w=\overline{u_1}~\cdots~\overline{u_n}\in \ig(B)$. First we claim that $$\overline{u_1}~\cdots~\overline{u_n}~\mathcal{R}~\overline{u_1}~\cdots~\overline{u_{n-1}}.$$ Observe that $(u_n, u_{n}u_{n-1})$ and $(u_{n-1}, u_nu_{n-1})$ are both basic pairs. Hence we have
$$
\begin{aligned}
\overline{u_1}~\cdots~\overline{u_{n-1}}~\overline{u_n}~\overline{u_nu_{n-1}}&=\overline{u_1}~\cdots~
\overline{u_{n-1}}~\overline{u_nu_nu_{n-1}} \ \ \ \ \\
                                              & =
\overline{u_1}~\cdots~\overline{u_{n-1}}~\overline{u_nu_{n-1}}\ \ \ \ \\
                                              &=
\overline{u_1}~\cdots~\overline{u_{n-1}u_nu_{n-1}} \\
                                              &=
\overline{u_1}~\cdots~\overline{u_{n-1}},
\end{aligned}
$$
so that $\overline{u_1}~\cdots~\overline{u_n}~\mathcal{R}~\overline{u_1}~\cdots~\overline{u_{n-1}}.$ By finite induction we obtain that $\overline{u_1}~\cdots~\overline{u_{n}}~\mathcal{R}~\overline{u_1}.$

\medskip

Similarly, we can show that  $\overline{u_1}~\overline{u_2}~\cdots~\overline{u_n}~\mathcal{L}~\overline{u_n}.$ Certainly then $\ig(B)$ is regular.  \end{proof}

\begin{coro}\label{regular of IG(B)}
Let $B$ be a semilattice $Y$ of rectangular bands $B_\alpha,$ $\alpha\in Y.$ Then for any $x_1, \cdots, x_n\in B_\alpha,$  $\overline{x_1}~\cdots~\overline{x_n}$ is a regular element of $\ig(B).$
\end{coro}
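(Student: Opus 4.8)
The plan is to show that the computation establishing regularity in Lemma \ref{rectangular band} goes through verbatim for a word $\overline{x_1}\cdots\overline{x_n}$ whose letters all lie in a single $\mathcal{D}$-class $B_\alpha$, even though $B$ itself need not be a rectangular band. The key observation is that $B_\alpha$ is itself a rectangular band and is closed under multiplication, since $B_\alpha B_\alpha \subseteq B_{\alpha\alpha}=B_\alpha$; consequently every product formed from $x_1,\ldots,x_n$, and every iterated such product arising in the reduction, remains inside $B_\alpha$.

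First I would record that the two facts driving the reduction in Lemma \ref{rectangular band} survive in the present setting. On the one hand, $(u_n,u_nu_{n-1})$ and $(u_{n-1},u_nu_{n-1})$ are basic pairs for \emph{any} idempotents $u_{n-1},u_n$, since $u_n(u_nu_{n-1})=u_nu_{n-1}$ and $(u_nu_{n-1})u_{n-1}=u_nu_{n-1}$ use only idempotency, which holds for all band elements. On the other hand, the single identity that genuinely needs the rectangular structure, namely $u_{n-1}u_nu_{n-1}=u_{n-1}$, holds here because $u_{n-1},u_n\in B_\alpha$ and $B_\alpha$ is a rectangular band.

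With these in hand I would run the identical chain of equalities as in Lemma \ref{rectangular band}, applied to $x_{n-1},x_n\in B_\alpha$, to obtain $\overline{x_1}\cdots\overline{x_n}~\mathcal{R}~\overline{x_1}\cdots\overline{x_{n-1}}$ in $\ig(B)$; the right multiplier $\overline{x_nx_{n-1}}$ again lies in $\overline{B_\alpha}^+$, so the shortened word $\overline{x_1}\cdots\overline{x_{n-1}}$ is still a word over $\overline{B_\alpha}$ and the induction may continue. A finite induction then yields $\overline{x_1}\cdots\overline{x_n}~\mathcal{R}~\overline{x_1}$, and dually $\overline{x_1}\cdots\overline{x_n}~\mathcal{L}~\overline{x_n}$. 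Since $\overline{x_1}$ is an idempotent of $\ig(B)$, and an element whose $\mathcal{R}$-class contains an idempotent is regular, the element $\overline{x_1}\cdots\overline{x_n}$ is regular.

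I do not anticipate a serious obstacle, as the argument is a direct transfer of Lemma \ref{rectangular band}. The one point requiring care is the bookkeeping that every intermediate product stays within $B_\alpha$ throughout the induction, so that the rectangular-band identity $uvu=u$ remains available at each stage; this is precisely what the closure $B_\alpha B_\alpha\subseteq B_\alpha$ guarantees.
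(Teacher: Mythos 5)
Your proof is correct, but it takes a genuinely different route from the paper's. The paper does not re-run the computation of Lemma \ref{rectangular band} inside $\ig(B)$: instead it observes that the inclusion $B_\alpha\hookrightarrow B$ induces a well-defined morphism $\overline{\psi}\colon\ig(B_\alpha)\longrightarrow\ig(B)$ with $\overline{e}\,\overline{\psi}=\overline{e}$ (the same idea as Lemma \ref{homo}), invokes Lemma \ref{rectangular band} to get regularity of $\overline{x_1}\cdots\overline{x_n}$ in $\ig(B_\alpha)$, and then uses the fact that morphisms preserve regularity. Your argument instead transfers the computation itself: you check that the pairs $(u_n,u_nu_{n-1})$ and $(u_{n-1},u_nu_{n-1})$ are still basic in $B$ (they are, since basicness here needs only idempotency, which every band element has), that the rectangular identity $u_{n-1}u_nu_{n-1}=u_{n-1}$ is available because both elements lie in the rectangular band $B_\alpha$, and that closure $B_\alpha B_\alpha\subseteq B_\alpha$ keeps the induction inside $B_\alpha$. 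Note that this verification of basic pairs is exactly the content that makes $\overline{\psi}$ well defined, so the two proofs rest on the same underlying observation; the difference is packaging. What your version buys is slightly stronger explicit information: you obtain $\overline{x_1}\cdots\overline{x_n}~\mathcal{R}~\overline{x_1}$ and $\overline{x_1}\cdots\overline{x_n}~\mathcal{L}~\overline{x_n}$ \emph{in} $\ig(B)$ itself, which is precisely what the paper needs later (e.g.\ in the proof of Proposition \ref{simple normal band}, where $\mathcal{R}$-relatedness ``in $\ig(B_{\gamma_t})$ and hence in $\ig(B)$'' is used, tacitly relying on morphisms preserving $\mathcal{R}$). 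What the paper's version buys is brevity and a reusable transfer principle: once $\overline{\psi}$ is in hand, any property preserved by morphisms passes from $\ig(B_\alpha)$ to $\ig(B)$ without repeating calculations.
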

\begin{proof}
It is clear from the presentations of $\ig(B_\alpha)$ and $\ig(B)$ that there is a well defined morphism $$\overline{\psi}: \ig(B_\alpha)\longrightarrow \ig(B), \mbox{~such~that~}\overline{e}~\overline{\psi}=\overline{e}$$ for each $e\in B_\alpha$. It follows from Lemma \ref{rectangular band} that for any $x_1, \cdots, x_n\in B_\alpha,$  $\overline{x_1}~\cdots~\overline{x_n}$ is regular in $\ig(B_\alpha).$ Since clearly $\overline{\psi}$ preserves  regularity, we have that $(\overline{x_1}~\cdots~\overline{x_n})~\overline{\psi}=\overline{x_1}~\cdots~\overline{x_n}$ is regular in $\ig(B).$
\end{proof}
\section{Free idempotent generated semigroups over bands}\label{sec:bands}

Our aim here is to investigate the general structure of $\ig(B)$ for an arbitrary band $B$. We  prove that for any band $B$, the semigroup $\ig(B)$ is  weakly abundant  with the congruence condition. However, we demonstrate a band $B$ for which $\ig(B)$ is not abundant.

\begin{lem}\label{homo}
Let $S$ and $T$ be semigroups with biordered sets of idempotents $U=E(S)$ and $V=E(T)$, respectively, and let $\theta: S\longrightarrow T$ be a morphism. Then the map from $\overline{U}$ to $\overline{V}$ defined by $\overline{e}\mapsto \overline{e\theta}$, for all $e\in U$, lifts to a well defined morphism $\overline{\theta}: \ig(U)\longrightarrow \ig(V)$.
\end{lem}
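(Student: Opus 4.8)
The plan is to verify that the assignment $\overline{e}\mapsto\overline{e\theta}$ respects the defining relations of $\ig(U)$, for this is exactly what is required to lift a map on generators to a morphism on the free idempotent generated semigroup. Recall that $\ig(U)$ is presented by generators $\overline{U}$ subject to the relations $\overline{e}\,\overline{f}=\overline{ef}$ whenever $(e,f)$ is a basic pair, that is, whenever $\{e,f\}\cap\{ef,fe\}\neq\emptyset$. By the universal property of a semigroup presentation, to obtain a well-defined morphism $\overline{\theta}:\ig(U)\longrightarrow\ig(V)$ it suffices to produce a map $\overline{U}\longrightarrow\ig(V)$ whose induced morphism $\overline{U}^+\longrightarrow\ig(V)$ sends the two sides of every defining relation of $\ig(U)$ to the same element of $\ig(V)$.

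First I would observe that $\theta$ maps idempotents to idempotents, so $e\theta\in V$ for each $e\in U$ and the proposed map $\overline{U}\longrightarrow\overline{V}\subseteq\ig(V)$, $\overline{e}\mapsto\overline{e\theta}$, genuinely lands in $\ig(V)$. The heart of the argument is then to check the relations. Let $(e,f)$ be a basic pair in $U$, so $\{e,f\}\cap\{ef,fe\}\neq\emptyset$; the defining relation is $\overline{e}\,\overline{f}=\overline{ef}$. Applying the candidate map, the left-hand side goes to $\overline{e\theta}\,\overline{f\theta}$ and the right-hand side to $\overline{(ef)\theta}=\overline{(e\theta)(f\theta)}$, since $\theta$ is a morphism. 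Thus I must show $\overline{e\theta}\,\overline{f\theta}=\overline{(e\theta)(f\theta)}$ holds in $\ig(V)$, and this is precisely a defining relation of $\ig(V)$ provided $(e\theta,f\theta)$ is itself a basic pair in $V$.

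The key step, then, is to confirm that $\theta$ carries basic pairs to basic pairs. Suppose $\{e,f\}\cap\{ef,fe\}\neq\emptyset$; without loss of generality say $ef=e$ (the cases $ef=f$, $fe=e$, $fe=f$ being symmetric). Applying $\theta$ gives $(e\theta)(f\theta)=(ef)\theta=e\theta$, so $e\theta\in\{(e\theta)(f\theta),(f\theta)(e\theta)\}$ and hence $\{e\theta,f\theta\}\cap\{(e\theta)(f\theta),(f\theta)(e\theta)\}\neq\emptyset$; that is, $(e\theta,f\theta)$ is a basic pair in $V$. Consequently $\overline{e\theta}\,\overline{f\theta}=\overline{(e\theta)(f\theta)}$ is a relation of $\ig(V)$ and the two images agree. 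Since every defining relation of $\ig(U)$ is thereby preserved, the map lifts to a well-defined morphism $\overline{\theta}:\ig(U)\longrightarrow\ig(V)$, as claimed.

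I do not anticipate a serious obstacle here: the result is essentially the functoriality of the construction $\ig(-)$ with respect to biorder-preserving maps, and the only genuine content is the (easy) verification that a morphism between semigroups restricts to a map of idempotents preserving basic products. The mild point to handle with care is simply the bookkeeping of the several symmetric cases in the definition of a basic pair, together with the remark that $\theta|_U:U\to V$ need not be injective, which is harmless since we only require that relations be sent to relations and never use any reverse implication.
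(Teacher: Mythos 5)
Your proposal is correct and follows exactly the paper's approach: the paper's (one-sentence) proof likewise rests on the observation that a semigroup morphism $\theta$ sends basic pairs to basic pairs, so the defining relations of $\ig(U)$ map to defining relations of $\ig(V)$. You have simply supplied the routine verification of that observation, which the paper leaves implicit.
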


\begin{proof}
Since $\theta$ is a morphism by assumption, we have that $(e,f)$ is a basic pair in $U$ implies $(e\theta,f\theta)$ is a basic pair in $V$, so that there exists a morphism $\overline{\theta}: \ig(U)\longrightarrow \ig(V)$ defined by $\overline{e}~\overline{\theta}=\overline{e\theta}$, for all $e\in U$.
\end{proof}

Let $B$ be a band, which for the rest of this section we write as  a semilattice $Y$ of rectangular bands $B_\alpha$, $\alpha\in Y$. The mapping $\theta$ defined by $$\theta: B\longrightarrow Y, x\mapsto \alpha$$ where $x\in B_{\alpha}$, is a morphism with kernel $\mathcal{D}$. Applying Lemma \ref{homo} to this $\theta$,  we have the following corollary.

\begin{coro}\label{direct}
Let $B=\bigcup_{\alpha\in Y} B_\alpha$ be a semilattice $Y$ of rectangular bands $B_\alpha$, $\alpha\in Y$. Then a map $\overline{\theta}: \ig(B)\longrightarrow \ig(Y)$ defined by $$(\overline{x_1}~\cdots~\overline{x_n})~\overline{\theta}=\overline{\alpha_1}~\cdots~\overline{\alpha_n}$$ is a morphism, where $x_i\in B_{\alpha_i}$, for all $i\in [1,n]$.
\end{coro}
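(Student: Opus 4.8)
The plan is to obtain this statement as a direct instantiation of Lemma \ref{homo}. First I would take $S=B$ and $T=Y$ in that lemma. Since $B$ is a band, every element is idempotent, so $U=E(B)=B$ and hence $\ig(U)=\ig(B)$; similarly $Y$ is a (commutative) band, so $V=E(Y)=Y$ and $\ig(V)=\ig(Y)$. The map $\theta\colon B\to Y$, $x\mapsto\alpha$ for $x\in B_\alpha$, is a semigroup morphism: for $x\in B_\alpha$ and $y\in B_\beta$ we have $xy\in B_{\alpha\beta}$ by the defining property $B_\alpha B_\beta\subseteq B_{\alpha\beta}$ of a semilattice of rectangular bands, whence $(xy)\theta=\alpha\beta=(x\theta)(y\theta)$. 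This is precisely the morphism identified in the paragraph preceding the statement, so nothing new is needed here.

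Applying Lemma \ref{homo} to $\theta$ then yields a well-defined morphism $\overline{\theta}\colon\ig(B)\to\ig(Y)$ determined on generators by $\overline{e}\,\overline{\theta}=\overline{e\theta}$ for all $e\in B$. It remains only to read off the asserted formula: given $\overline{x_1}~\cdots~\overline{x_n}\in\ig(B)$ with $x_i\in B_{\alpha_i}$, we have $x_i\theta=\alpha_i$, so $\overline{x_i}\,\overline{\theta}=\overline{\alpha_i}$, and since $\overline{\theta}$ is a morphism,
\[
(\overline{x_1}~\cdots~\overline{x_n})\,\overline{\theta}=(\overline{x_1}\,\overline{\theta})~\cdots~(\overline{x_n}\,\overline{\theta})=\overline{\alpha_1}~\cdots~\overline{\alpha_n},
\]
as required.

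There is no genuine obstacle in this argument: the one point deserving care is the identification $E(B)=B$ and $E(Y)=Y$, which is exactly what allows the lifted map $\overline{\theta}\colon\ig(E(B))\to\ig(E(Y))$ supplied by Lemma \ref{homo} to be viewed as a map $\ig(B)\to\ig(Y)$. In particular, the well-definedness of $\overline{\theta}$ — that the formula respects the defining relations of the presentation of $\ig(B)$ — is already guaranteed by Lemma \ref{homo} and so need not be re-verified by hand.
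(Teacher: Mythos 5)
Your proposal is correct and is exactly the paper's argument: the authors likewise observe that $\theta\colon B\to Y$, $x\mapsto\alpha$ for $x\in B_\alpha$, is a morphism (with kernel $\mathcal{D}$) and obtain the corollary by applying Lemma \ref{homo} to this $\theta$. Your additional explicit checks (that $E(B)=B$, $E(Y)=Y$, and that the formula follows on generators) are just a careful spelling-out of the same deduction.
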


To proceed further we need the following definition of {\it left to right significant indices} of elements in $\ig(B)$.

\medskip

Let $\overline{x_1}\cdots~\overline{x_n}\in \overline{B}^+$ with $x_i\in B_{\alpha_i}$, for all $1\leq i\leq n$. Then a set of numbers $$\{i_1,\cdots,i_r\}\subseteq [1, n] \mbox{~with~} i_1< \cdots<i_r$$ is called the {\it left to right significant indices} of $\overline{x_1}\cdots~\overline{x_n}$, if these numbers are picked out in the following manner:

$i_1:$  the largest number such that $\alpha_1, \cdots, \alpha_{i_1}\geq \alpha_{i_1}$;

$k_1:$ the largest number such that $\alpha_{i_1}\leq \alpha_{i_1},\alpha_{i_1+1},\cdots,\alpha_{k_1}.$
\medskip

We pause here to remark that $\alpha_{i_1}, \alpha_{k_1+1}$ are incomparable. Because, if $\alpha_{i_1}\leq \alpha_{k_1+1}$, we add $1$ to $k_1$, contradicting the choice of $k_1;$ and if $\alpha_{i_1}>\alpha_{k_1+1}$, then $\alpha_1,\cdots,\alpha_{i_1},\cdots,\alpha_{k_1}\geq \alpha_{k_1+1}$, contradicting the choice of $i_1$. Now we continue our process:

\medskip

$i_2:$ the largest number such that $\alpha_{k_1+1}, \cdots,\alpha_{i_2}\geq \alpha_{i_2};$

$k_2:$ the largest number such that $\alpha_{i_2}\leq \alpha_{i_2}, \alpha_{i_2+1}, \cdots, \alpha_{k_2}.$

$\vdots$

$i_r:$ the largest number such that $\alpha_{k_{r-1}+1}, \cdots, \alpha_{i_r}\geq \alpha_{i_r};$

$k_r=n$: here we have $\alpha_{i_r}\leq \alpha_{i_r}, \alpha_{{i_r}+1}, \cdots, \alpha_n.$ Of course, we may have $i_r=k_r=n.$

\medskip

Corresponding to the so called left to right significant indices $i_1, \cdots, i_r,$ we have $$\alpha_{i_1},\cdots, \alpha_{i_r}\in Y.$$ We claim that for all $1\leq s\leq r-1$, $\alpha_{i_s}$ and $\alpha_{i_{s+1}}$ are incomparable. If not, suppose that there exists some $1\leq s\leq r-1$ such that $\alpha_{i_s}\leq \alpha_{i_{s+1}}.$ Then $\alpha_{i_s}\leq \alpha_{k_s+1}$ as $\alpha_{i_{s+1}}\leq \alpha_{k_s+1},$ a contradiction; if $\alpha_{i_s}\geq \alpha_{i_{s+1}},$ then $\alpha_{i_{s+1}}\leq \alpha_{i_{s+1}}, \alpha_{i_{s+1}-1},\cdots, \alpha_{k_{s-1}+1}$ with $k_0=0$, contradicting our choice of $i_s.$ Therefore,  we deduce that $\overline{\alpha_{i_1}}~\cdots~\overline{\alpha_{i_r}}$ is the unique normal form of $\overline{\alpha_1}~\cdots~\overline{\alpha_n}$ in $\ig(Y).$

We can use the following {\it Hasse diagram} to depict the relationship among $\alpha_1, \cdots, \alpha_{i_r}:$

\begin{center}
\begin{tikzpicture}[scale=1.5]
\node (A) at (0,1) {$\alpha_1$};
\node (B) at (0.5,1) {$\cdots$};
\node (C) at (1,1) {$\alpha_{i_1-1}$};
\node (D) at (1.5,0) {$\alpha_{i_1}$};
\node (E) at (2,1) {$\alpha_{i_1+1}$};
\node (F) at (2.5,1) {$\cdots$};
\node (G) at (2.9,1) {$\alpha_{k_1}$};
\node (H) at (3.5,1) {$\alpha_{k_1+1}$};
\node (I) at (4,1) {$\cdots$};
\node (J) at (4.5,1) {$\alpha_{i_2-1}$};
\node (K) at (5,0) {$\alpha_{i_2}$};
\node (M) at (5.5,0) {$\cdots$};
\node (N) at (6,0) {$\alpha_{i_r}$};
\node (L) at (5.5,1) {$\cdots$};
\node (P) at (6.5,1) {$\alpha_{i_r+1}$};
\node (Q) at (7,1) {$\cdots$};
\node (R) at (7.5,1) {$\alpha_{n}$};
\path[-,font=\scriptsize,>=angle 60]
(A) edge node[above]{} (D)
(B) edge node[above]{} (D)
(C) edge node[above]{} (D)
(E) edge node[above]{} (D)
(F) edge node[above]{} (D)
(G) edge node[above]{} (D)
(H) edge node[above]{} (K)
(I) edge node[above]{} (K)
(J) edge node[above]{} (K)
(P) edge node[above]{} (N)
(Q) edge node[above]{} (N)
(R) edge node[above]{} (N);

\end{tikzpicture}
\end{center}

Dually, we can define the  {\it right to left significant indices} $\{l_1, \cdots, l_s\}\subseteq [1,n]$ of the element $\overline{x_1}\cdots~\overline{x_n}\in \overline{B}^+$, where $l_1<\cdots<l_s.$ Note that as $\overline{\alpha_{i_1}}~\cdots~\overline{\alpha_{i_r}}$ must equal to  $\overline{\alpha_{l_1}}~\cdots~\overline{\alpha_{l_s}}$ in $\overline{B}^+$, we have $r=s.$

\begin{lem}\label{indices are equal}
Let $\overline{x_1}\cdots~\overline{x_n}\in \overline{B}^+$ with $x_i\in \alpha_{i}$, for all $i\in [1,n]$, and left to right significant indices $i_1,\cdots, i_r$. Suppose also that  $\overline{y_1}\cdots~\overline{y_m}\in \overline{B}^+$ with $y_i\in \beta_{i}$, for all $i\in[1,m]$, and left to right significant indices $l_1,\cdots, l_s$. Then $$\overline{x_1}\cdots~\overline{x_n}=\overline{y_1}\cdots~\overline{y_m}$$ in $\ig(B)$ implies $s=r$ and $\alpha_{i_1}=\beta_{l_1}, \cdots, \alpha_{i_r}=\beta_{l_r}.$
\end{lem}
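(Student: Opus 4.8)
The plan is to pass to the semilattice $Y$ via the morphism $\overline{\theta}:\ig(B)\longrightarrow\ig(Y)$ of Corollary \ref{direct}, which collapses each $\mathcal{D}$-class $B_\alpha$ to its index $\alpha$. First I would observe that, by construction of the left to right significant indices together with the remark immediately preceding this lemma, the element $\overline{\alpha_1}\cdots~\overline{\alpha_n}$ has $\overline{\alpha_{i_1}}~\cdots~\overline{\alpha_{i_r}}$ as its \emph{unique} normal form in $\ig(Y)$; likewise $\overline{\beta_1}\cdots~\overline{\beta_m}$ has unique normal form $\overline{\beta_{l_1}}~\cdots~\overline{\beta_{l_s}}$. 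This is where I use that normal forms in $\ig(Y)$ are unique (Lemma \ref{uniqueness}).

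The key step is then to apply $\overline{\theta}$ to the hypothesis. Since $x_i\in B_{\alpha_i}$ and $y_j\in B_{\beta_j}$, the definition of $\overline{\theta}$ gives
\[
(\overline{x_1}\cdots~\overline{x_n})\,\overline{\theta}=\overline{\alpha_1}~\cdots~\overline{\alpha_n},
\qquad
(\overline{y_1}\cdots~\overline{y_m})\,\overline{\theta}=\overline{\beta_1}~\cdots~\overline{\beta_m}.
\]
From the assumption $\overline{x_1}\cdots~\overline{x_n}=\overline{y_1}\cdots~\overline{y_m}$ in $\ig(B)$ and the fact that $\overline{\theta}$ is a well defined morphism, I deduce that $\overline{\alpha_1}~\cdots~\overline{\alpha_n}=\overline{\beta_1}~\cdots~\overline{\beta_m}$ holds in $\ig(Y)$. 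Two elements of $\ig(Y)$ are equal precisely when they share the same normal form, so $\overline{\alpha_{i_1}}~\cdots~\overline{\alpha_{i_r}}=\overline{\beta_{l_1}}~\cdots~\overline{\beta_{l_s}}$ in $\ig(Y)$.

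Finally I would extract the conclusion by comparing these two normal forms letter by letter. Since each is irreducible, uniqueness of normal form in $\ig(Y)$ forces them to be \emph{identical} words in $\overline{Y}^+$, whence they have equal length, giving $r=s$, and equal corresponding letters, giving $\alpha_{i_1}=\beta_{l_1},\dots,\alpha_{i_r}=\beta_{l_r}$, as required.

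I do not expect any serious obstacle here: the whole argument is a transport of the equality through the homomorphism $\overline{\theta}$ followed by an appeal to uniqueness of normal forms over a semilattice. The only point requiring care is the bookkeeping already done in the paragraph preceding the lemma, namely verifying that the significant indices really do yield the normal form of the image in $\ig(Y)$ (in particular that consecutive $\alpha_{i_s}$ are pairwise incomparable, which was checked there); once that is granted, the lemma is essentially immediate.
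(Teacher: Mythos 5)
Your proposal is correct and is essentially the paper's own proof: apply the morphism $\overline{\theta}$ of Corollary \ref{direct} to transport the equality into $\ig(Y)$, note (using the discussion preceding the lemma) that $\overline{\alpha_{i_1}}~\cdots~\overline{\alpha_{i_r}}$ and $\overline{\beta_{l_1}}~\cdots~\overline{\beta_{l_s}}$ are the respective normal forms, and conclude by uniqueness of normal forms in $\ig(Y)$ (Lemma \ref{uniqueness}). No gaps; the paper's version is simply a more compressed statement of the same argument.
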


\begin{proof}
It follows from Lemma \ref{direct} and the discussion above that $$\overline{\alpha_{i_1}}~\cdots~\overline{\alpha_{i_r}}=\overline{\alpha_1}~\cdots~\overline{\alpha_n}=
\overline{\beta_1}~\cdots~\overline{\beta_m}=\overline{\beta_{l_1}}~\cdots~\overline{\beta_{l_s}}$$ in $\ig(Y).$ By uniqueness of normal form, we have that $s=r$ and $\alpha_{i_1}=\beta_{l_1}, \cdots, \alpha_{i_r}=\beta_{l_r}.$
\end{proof}

In view of  the above observations, we introduce the following notions.

\medskip

Let $w=\overline{x_1}~\cdots\overline{x_n}$ be a word in $\overline{B}^+$ with $x_i\in B_{\alpha_i}$, for all $i\in [1,n].$ Suppose that $w$ has left to right significant indices $i_1, \cdots, i_r.$ Then we call the natural number $r$ the {\it $Y$-length}, and $\alpha_{i_1}, \cdots, \alpha_{i_r}$ the {\it ordered $Y$-components} of the equivalence class of $w$ in $\ig(B).$

In what follows whenever we write $w\sim w'$ for $w, w'\in \overline{B}^+$, we mean that the word $w'$ can be obtained from the word $w$  from a single step $\longrightarrow$ or its reverse $\longleftarrow$ as in Lemma~\ref{reduction systems}. 

\begin{lem}\label{useful}
Let  $\overline{x_1}~\cdots~\overline{x_n}\in \overline{B}^+$ with left to right significant indices $i_1, \cdots, i_r,$ where $x_i\in B_{\alpha_i},$ for all $i\in [1,n].$ Let $\overline{y_1}\cdots~\overline{y_m}\in \overline{B}^+$ be 
such that  $\overline{y_1}\cdots~\overline{y_m}\sim \overline{x_1}~\cdots~\overline{x_n}$, and suppose that the left to right significant indices of $\overline{y_1}\cdots~\overline{y_m}$ are  $j_1,\cdots,j_r$. Then for all $l\in [1,r],$ we have $$\overline{y_1}~\cdots~\overline{y_{j_l}}=
\overline{x_1}~~\cdots~\overline{x_{i_l}}~\overline{u}$$ and $y_{j_l}=u'x_{i_l}u,$ where $u'=\varepsilon$ or $u'\in B_\sigma$ with $\sigma\geq \alpha_{i_l},$ and either $u=\varepsilon$,  or $u\in B_\delta$ for some $\delta> \alpha_{i_l}$, or $u\in B_{\alpha_{i_l}}$ and there exists $v\in B_{\theta}$ with $\theta>\alpha_{i_l}$, $vu=u$ and $uv=x_{i_l}$.
\end{lem}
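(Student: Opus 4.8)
The plan is to exploit the fact that a single $\sim$-step alters the word only by one basic relation, so that the block decomposition recorded by the significant indices is perturbed only locally. Write $y_k\in B_{\beta_k}$. First I would record two facts. Since $\overline{y_1}\cdots\overline{y_m}=\overline{x_1}\cdots\overline{x_n}$ in $\ig(B)$, Lemma \ref{indices are equal} yields $\alpha_{i_l}=\beta_{j_l}$ for every $l$, so the significant letters $x_{i_l}$ and $y_{j_l}$ lie in the common rectangular band $B_{\alpha_{i_l}}$. Secondly, by the choice of the left to right significant indices, every component occurring in the $l$-th block of $\overline{x_1}\cdots\overline{x_n}$ (positions $k_{l-1}+1,\dots,k_l$) is $\geq\alpha_{i_l}$, and the component immediately to the right of $i_l$ inside the block is strictly greater than $\alpha_{i_l}$, since $i_l$ is the largest index $i$ of the block with $\alpha_{k_{l-1}+1},\dots,\alpha_i\geq\alpha_i$; in particular there are no ties with $\alpha_{i_l}$ to the right of $i_l$. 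These constraints govern what the step can do to block $l$.

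Next I would organise the argument as a case analysis on the type and position of the step. Present the step as the contraction of a basic pair at some position $p$, so that $\sim$ is either $\overline{x_1}\cdots\overline{x_n}\longrightarrow\overline{y_1}\cdots\overline{y_m}$ (contracting $x_p,x_{p+1}$) or its reverse (contracting $y_p,y_{p+1}$ to produce $x$). If $\{p,p+1\}$ lies outside the $l$-th block, then $x_{i_l}$ is untouched, so $y_{j_l}=x_{i_l}$ and $u=u'=\varepsilon$, while $\overline{y_1}\cdots\overline{y_{j_l}}=\overline{x_1}\cdots\overline{x_{i_l}}$ in $\ig(B)$ because the two prefixes differ by at most the one basic relation, after a routine reindexing of the significant indices. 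The substantive situation is when $\{p,p+1\}$ meets the $l$-th block, which I would subdivide according to whether $p$ is strictly left of $i_l$, abuts $i_l$ on the left, abuts it on the right, or (only for the reverse step) the split letter is $x_{i_l}$ itself. In each subcase the residues $u$ and $u'$ are assembled from letters of the block and from the contracted or split pieces, all of whose components are $\geq\alpha_{i_l}$, which gives $u'=\varepsilon$ or $u'\in B_\sigma$ with $\sigma\geq\alpha_{i_l}$. Because there are no ties with $\alpha_{i_l}$ to the right of $i_l$, a right residue $u$ in the same band $B_{\alpha_{i_l}}$ can arise only when $x_{i_l}$ is split by the reverse step into a same-band letter $y_p$ and a strictly higher letter $v=y_{p+1}$ forming a basic pair; this forces exactly one of $y_pv=y_p$ or $vy_p=y_p$, and the second alternative delivers the exceptional form $u=y_p\in B_{\alpha_{i_l}}$ with $vu=u$ and $uv=x_{i_l}$. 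Once $u$ and $u'$ are identified, both $\overline{y_1}\cdots\overline{y_{j_l}}=\overline{x_1}\cdots\overline{x_{i_l}}\,\overline{u}$ and $y_{j_l}=u'x_{i_l}u$ follow by applying the single basic relation and reassociating.

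The main obstacle will be the bookkeeping around the significant letter: one must verify that after the step the $l$-th significant index of $\overline{y_1}\cdots\overline{y_m}$ sits where claimed, that is, that the modification neither introduces a new occurrence of the minimal component $\alpha_{i_l}$ to the right of it nor breaks the defining maximality property that pins down $j_l$. This is exactly the point at which the absence of ties to the right of $i_l$ is used, and it is also what rules out any same-band right residue other than the exceptional one above. With the location of $j_l$ secured, the two displayed identities are immediate from the explicit shape of the single basic relation in the relevant subcase.
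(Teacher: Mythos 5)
Your proposal is correct and takes essentially the same route as the paper: the paper's proof is precisely a single-step case analysis on the position $k$ of the split relative to $i_l$ (the cases $k<i_l$, $k=i_l$ with subcases, and $k>i_l$), with the exceptional same-band right residue arising, exactly as you identify, from splitting $x_{i_l}=uv$ with $u\in B_{\alpha_{i_l}}$, $v\in B_{\tau}$, $\tau>\alpha_{i_l}$ and $vu=u$. The only differences are organisational and harmless: the paper treats just the splitting direction and records each identity in both forms (e.g.\ $y_{j_l}=x_{i_l}u$ \emph{and} $x_{i_l}=y_{j_l}v$) so that the reverse step is covered implicitly, whereas you handle the two directions explicitly and make the significant-index bookkeeping (via the no-ties observation) explicit; also your ``exactly one of $y_pv=y_p$ or $vy_p=y_p$'' should read ``at least one'' (both can hold, e.g.\ in a locally large band, but then $y_p=uv=x_{i_l}$ and the trivial alternative applies).
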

\begin{proof}
Suppose that we split $x_k=uv$ for some $k\in [1,n],$ where $uv$ is a basic product with $u\in B_\mu$ and $v\in B_\tau,$ so that $\alpha_k=\mu \tau.$ Then
$$\overline{x_1}~\cdots~\overline{x_n}\sim \overline{x_1}~\cdots~\overline{x_{k-1}}~\overline{u}~\overline{v}~\overline{x_{k+1}}~\cdots~\overline{x_n}=\overline{y_1}~\cdots~\overline{y_m}.$$

If $k<i_l$,  then clearly $y_{j_l}=x_{i_l}$ and
$$\overline{y_1}~\cdots~\overline{y_{j_l}}
=\overline{x_1}~\cdots~\overline{x_{k-1}}~\overline{u}~\overline{v}~\overline{x_{k+1}}\cdots~
\overline{x_{i_l}}=\overline{x_1}~~\cdots~\overline{x_{i_l}},$$ so we may take $u=u'=\varepsilon.$

If $k=i_l,$ then $\mu\tau=\alpha_{i_l}.$ If $\mu\geq \tau$, then $y_{j_l}=v$ and again  $$\overline{y_1}~\cdots~\overline{y_{j_l}}=\overline{x_1}~\cdots \overline{x_{i_l-1}}~\overline{u}~\overline{v}=\overline{x_1}~\cdots~\overline{x_{i_l}}.$$ As $x_{i_l}=uv~\mathcal{L}~v$, we have $y_{j_l}=v=vx_{i_l}$. Also, $x_{i_l}=uv=uy_{j_l}.$

\medskip

On the other hand, if $\mu<\tau$, then $y_{j_l}=u$. As $uv$ is a basic product, $uv=u=x_{i_l}$ or $vu=u$. If $uv=u=x_{i_l}$, then  $$\overline{y_1}\cdots \overline{y_{j_1}}=\overline{x_1}\cdots \overline{x_{i_l-1}}~\overline{u}=\overline{x_1}\cdots \overline{x_{i_l}},$$ and $y_{j_l}=u=uv=x_{i_l}.$ If $vu=u$, then as $x_k=uv~\mathcal{R}~u$ and $u=uvu$,  $$\overline{y_1}\cdots \overline{y_{j_1}}=\overline{x_1}\cdots~\overline{x_{i_l-1}}~\overline{u}=\overline{x_1}\cdots~\overline{x_{k-1}}~
\overline{uv}~\overline{u}=\overline{x_1}\cdots~\overline{x_{i_l}}~\overline{u}$$ and $y_{j_l}=x_{i_l}u$ where $vu=u.$
Also,
$$\overline{x_1}\cdots~\overline{x_{i_l}}=\overline{x_1}\cdots~\overline{x_{k-1}}~
\overline{uv}=\overline{x_1}~\cdots~\overline{x_{i_l}}~\overline{u}~\overline{v}=\overline{y_1}~\cdots~\overline{y_{j_l}}~\overline{v}$$ and $x_{i_l}=y_{j_l}v.$

\medskip

Finally, suppose that $k>i_l$. Then it is obviously that $j_l=i_l,$ $x_{i_l}=y_{j_l}$ and $$\overline{y_1}~\cdots~\overline{y_{j_l}}
=\overline{x_1}~\cdots~\overline{x_{i_l}}.$$
\end{proof}

It follows immediately from Lemma \ref{useful} that
\begin{coro}\label{important}
Suppose that $\overline{y_1}\cdots~\overline{y_m}=\overline{x_1}~\cdots~\overline{x_n}\in \ig(B)$ with left to right significant indices $j_1,\cdots,j_r$ and $i_1, \cdots, i_r$, respectively, and suppose $x_i\in B_{\alpha_i}$ for all $i\in [1,n].$ Then for all $l\in [1,r]$, we have $$\overline{y_1}~\cdots~\overline{y_{i_l}}=
\overline{x_1}~\cdots~\overline{x_{i_l}}~\overline{u_1}~\overline{u_2}\cdots~\overline{u_s}$$ and $y_{j_l}=u_s'\cdots u_1'x_{i_l}u_1\cdots u_s,$ where for all $t\in [1,s],$ $u_t'=\varepsilon$ or $u_t'\in B_{\sigma_t}$ for some $\sigma_t\geq \alpha_{i_l}$, and either $u_t=\varepsilon$ or $u_t\in B_{\delta_t}$ for some $\delta_t> \alpha_{i_l}$, or $u_t\in B_{\alpha_{i_l}}$ and there exists $v_t\in B_{\theta_t}$ with $\theta_t>\alpha_{i_l}$ and  $v_t u_t=u_t$. Consequently, $\overline{y_1}~\cdots~\overline{y_{j_l}}~\mathcal{R}~
\overline{x_1}~\cdots~\overline{x_{i_l}},$ and hence $y_1~\cdots~y_{j_l}~\mathcal{R}~
x_1~\cdots~x_{i_l}.$
\end{coro}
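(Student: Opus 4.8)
The plan is to obtain Corollary \ref{important} by iterating Lemma \ref{useful}. Lemma \ref{useful} handles a \emph{single} application of $\sim$, that is, one elementary step $\longrightarrow$ or $\longleftarrow$. Since $\overline{y_1}\cdots\overline{y_m}=\overline{x_1}\cdots\overline{x_n}$ in $\ig(B)$ means precisely that the two words are connected by a finite chain of such elementary steps, the natural strategy is induction on the length $s$ of a chain
\[\overline{x_1}\cdots\overline{x_n}=w_0\sim w_1\sim\cdots\sim w_s=\overline{y_1}\cdots\overline{y_m}.\]
First I would note that by Lemma \ref{indices are equal} all the intermediate words $w_0,\dots,w_s$ share the same $Y$-length $r$ and the same ordered $Y$-components $\alpha_{i_1},\dots,\alpha_{i_r}$, so each $w_t$ has well-defined left to right significant indices, and for each fixed $l\in[1,r]$ the prefix up to the $l$-th significant index is a sensible object to track along the chain.

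The inductive step is where Lemma \ref{useful} does the work. Fixing $l$, the base case $s=0$ is trivial (take all $u_t=\varepsilon$). For the inductive step I would apply Lemma \ref{useful} to the single step $w_{s-1}\sim w_s$: writing the significant index of $w_{s-1}$ at position $l$ and that of $w_s$ at position $l$, the lemma gives that the $w_s$-prefix equals the $w_{s-1}$-prefix times a single factor $\overline{u}$, with the appropriate constraints on $u$ (either $u=\varepsilon$, or $u\in B_\delta$ with $\delta>\alpha_{i_l}$, or $u\in B_{\alpha_{i_l}}$ admitting the absorbing $v$), together with the conjugation-type formula relating the respective $y$-letters. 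Composing this with the factorization $\overline{u_1}\cdots\overline{u_{s-1}}$ supplied by the inductive hypothesis for the chain $w_0\sim\cdots\sim w_{s-1}$ yields a product $\overline{u_1}\cdots\overline{u_{s-1}}\,\overline{u}$ of exactly the required shape, so I set $u_s=u$ and likewise prepend the new $u'$ factor to assemble $y_{j_l}=u_s'\cdots u_1'\,x_{i_l}\,u_1\cdots u_s$. Each $u_t$ carries precisely the three-way alternative asserted in the statement, which is inherited directly from Lemma \ref{useful} applied at each step.

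For the final consequence, I would argue that the displayed factorization forces $\overline{y_1}\cdots\overline{y_{j_l}}\;\mathcal{R}\;\overline{x_1}\cdots\overline{x_{i_l}}$. One direction is immediate: $\overline{x_1}\cdots\overline{x_{i_l}}$ is a prefix-left-divisor of $\overline{y_1}\cdots\overline{y_{j_l}}$ by the formula. For the reverse, I would use the structural constraints on each $u_t$: in every case $u_t$ is either absorbed on the left by some $v_t$ of strictly higher $Y$-component, or lies strictly above $\alpha_{i_l}$, so right-multiplying $\overline{y_1}\cdots\overline{y_{j_l}}$ by a suitable product (involving the $v_t$ and $x_{i_l}$) collapses the trailing $\overline{u_t}$ factors and recovers $\overline{x_1}\cdots\overline{x_{i_l}}$, exactly as in the cancellation computations of Lemma \ref{useful}. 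This gives mutual divisibility, hence the $\mathcal{R}$-relation in $\ig(B)$, and applying the morphism $\varphi$ to $\langle B\rangle$ (which preserves $\mathcal{R}$-relatedness) yields $y_1\cdots y_{j_l}\;\mathcal{R}\;x_1\cdots x_{i_l}$ in $B$.

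I expect the main obstacle to be the bookkeeping in the inductive step: reconciling how the $l$-th significant index \emph{moves} as one passes from $w_{s-1}$ to $w_s$, and verifying that the three-way case distinction on the new factor $u$ from Lemma \ref{useful} splices correctly onto the inductive hypothesis without disturbing the factors $u_1,\dots,u_{s-1}$ already produced. In particular, care is needed to confirm that a split or merge occurring \emph{before} position $i_l$ leaves the prefix genuinely unchanged (the $u=\varepsilon$ case), while a split at position $i_l$ itself is the only source of a nontrivial new $u_t$; keeping the positions aligned across the whole chain is the delicate part, whereas the $\mathcal{R}$-conclusion at the end is comparatively routine.
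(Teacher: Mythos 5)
Your proposal is correct and takes essentially the same route as the paper: the paper's entire proof of Corollary \ref{important} is that it ``follows from Lemma \ref{useful} by finite induction'', which is precisely your induction along a chain of elementary $\sim$-steps, splicing the single-step factor $\overline{u}$ from Lemma \ref{useful} onto the accumulated product $\overline{u_1}\cdots\overline{u_{s-1}}$ at each stage. Your supporting details --- invoking Lemma \ref{indices are equal} to keep the significant indices and components aligned across the chain, and the cancellation argument (or, equivalently, symmetry of $\sim$) for the mutual $\mathcal{R}$-divisibility --- are exactly the bookkeeping the paper leaves implicit.
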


\begin{proof}
The proof follows from Lemma \ref{useful} by finite induction.
\end{proof}

Note that the duals of Lemma \ref{useful} and Corollary \ref{important} hold for right to left significant indices.

\medskip

From  Lemmas \ref{uniqueness} and \ref{rect-unique}, we know that if $B$ is a semilattice or a rectangular band, then every element in $\ig(B)$ has a unique normal form. However, it may not true for an arbitrary band $B$, even if $B$ is normal. Recall that a band $B$ is {\em normal} if it satisfies the identity $xyzx=xzyx$. Equivalently, $B$ is a {\em strong} semilattice of rectangular bands, that is, $$B=\mathcal{B}(Y;B_{\alpha}, \phi_{\alpha, \beta})$$ is a semilattice $Y$ of rectangular bands $B_\alpha$, $\alpha\in Y,$ such that for all $\alpha\geq\beta$ in $Y$ there exists a morphism $\phi_{\alpha,\beta}: B_{\alpha}\longrightarrow B_{\beta}$ such that

\medskip

(B1) for all $\alpha\in Y$, $\phi_{\alpha,\alpha}=1_{B_{\alpha}};$

\medskip

(B2) for all $\alpha, \beta, \gamma\in Y$ such that $\alpha\geq\beta\geq\gamma,$ $\phi_{\alpha,\beta}\phi_{\beta,\gamma}=\phi_{\alpha, \gamma},$

\medskip
\noindent and for all $\alpha,\beta\in Y$ and $x\in B_{\alpha}, y\in B_\beta$, $$xy=(x\phi_{\alpha,\alpha\beta})(y\phi_{\beta,\alpha\beta}).$$

\begin{ex} {\rm Let $B=\mathcal{B}(Y; B_{\alpha}, \phi_{\alpha, \beta})$ be a strong semilattice $Y=\{\alpha,\beta,\gamma,\delta\}$ of rectangular bands $B_\alpha$, $\alpha\in Y$ (see the figure below), such that $\phi_{\alpha, \beta}$ is defined by $a \phi_{\alpha,\beta}=b,$ the remaining morphisms being defined in the obvious unique manner.

\begin{center}
\begin{tikzpicture}[scale=0.5]
\node (a) at (-1.5,2.0) {$B_\alpha$};
\node (w) at (0,2) {\boxed{a}};
\node (b) at (-3.5,0) {$B_\beta$};
\node (x) at (-2,0) {\begin{tabular}{ r|c|c| }
 \cline{2-3}
 & $b$ & $c$ \\
 \cline{2-3}
 \end{tabular}};
\node (y) at (2,0) {\boxed{d}};
\node (c) at (3.0,0) { $\, B_\gamma$};
\node (z) at (0,-2) {\boxed{e}};
\node (a) at (0,-3.0) {$B_\delta$};
\path[-,font=\scriptsize,>=angle 60]
(w) edge node[above]{} (x)
(w) edge node[right]{} (y)
(x) edge node[right]{} (z)
(y) edge node[above]{} (z);
\end{tikzpicture}
\end{center}

By an easy calculation, we have $$\overline{c}~\overline{d}=\overline{c}~\overline{ad}=\overline{c}~\overline{a}~\overline{d}=\overline{ca}~\overline{d}=\overline{b}~\overline{d}$$
in $\ig(B)$, so that not every element in $\ig(B)$ has a unique normal form.}
\end{ex}

\begin{lem}\label{almost normal form}
Let $B=\bigcup_{\alpha\in Y} B_{\alpha}$ be a semilattice $Y$ of rectangular bands $B_\alpha, \alpha\in Y$. Let $ \overline{x_1}~\cdots~\overline{x_n}\in \ig(B)$ with $x_{i}\in B_{\alpha_i}$, for all $i\in [1,n]$, and let $y\in B_\beta$ with $\beta\leq \alpha_i$, for all $i\in [1,n]$. Then in $\ig(B)$ we have $$\overline{x_1}~\cdots~\overline{x_n}~\overline{y}=\overline{x_1\cdots x_nyx_n\cdots x_1}~\cdots~\overline{x_{n-1}x_nyx_nx_{n-1}}~\overline{x_nyx_n}~\overline{y}$$ and
$$\overline{y}~\overline{x_1}~\cdots~\overline{x_n}=\overline{y}~\overline{x_1yx_1}~\overline{x_2x_1yx_1x_2}~\cdots~\overline{x_n\cdots x_1yx_1\cdots x_n}.$$
\end{lem}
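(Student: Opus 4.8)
The plan is to reduce the whole statement to a single insertion step and then iterate by induction on $n$. Write $z_{n+1}=y$ and, for $1\le i\le n$, set $z_i=x_i x_{i+1}\cdots x_n\,y\,x_n\cdots x_{i+1}x_i$, so that the right-hand side of the first identity is exactly $\overline{z_1}\,\overline{z_2}\cdots\overline{z_n}\,\overline{y}$ and the blocks obey the recursion $z_i=x_i z_{i+1}x_i$. Since $\beta\le\alpha_j$ for every $j$, each $z_i$ lies in the single rectangular band $B_\beta$ (using $B_\beta B_\beta\subseteq B_\beta$ and $B_\gamma B_\beta, B_\beta B_\gamma\subseteq B_\beta$ for $\gamma\ge\beta$); keeping all blocks inside one rectangular band is what makes the computation tractable.

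The heart of the matter is the following single-step claim, which I will isolate as a lemma: for any $x\in B_\gamma$ and $w\in B_\beta$ with $\beta\le\gamma$, one has $\overline{x}\,\overline{w}=\overline{xwx}\,\overline{w}$ in $\ig(B)$. The key structural fact I will verify first is that for such $x,w$ one has $wxw=w$; equivalently, right multiplication by $x$ preserves the $\mathcal{R}$-class of $w$ and left multiplication preserves its $\mathcal{L}$-class. This is a short consequence of associativity: coordinatising $B_\beta$ as $I\times\Lambda$, the identity $x(ww')=(xw)w'$ forces $xw$ and $w$ to share a column, and dually $R_x$ preserves rows, whence $wxw=w$. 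Two basic pairs now drop out. From $wx\,\mathcal{R}\,w$ we get $(wx)w=w$, so $(wx,w)$ is a basic pair and $\overline{wx}\,\overline{w}=\overline{w}$; and since $(wx)x=wx$, the pair $(x,wx)$ is basic with $\overline{x}\,\overline{wx}=\overline{x(wx)}=\overline{xwx}$. The claim then follows from the two-move chain
$$\overline{x}\,\overline{w}=\overline{x}\,\big(\overline{wx}\,\overline{w}\big)=\big(\overline{x}\,\overline{wx}\big)\,\overline{w}=\overline{xwx}\,\overline{w}.$$

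With the crux lemma in hand, the first identity follows by induction on $n$. The base case $n=1$ is the crux with $x=x_1$ and $w=y$. For the inductive step, apply the induction hypothesis to $\overline{x_2}\cdots\overline{x_n}\,\overline{y}$ to rewrite it as $\overline{z_2}\cdots\overline{z_n}\,\overline{y}$, prepend $\overline{x_1}$, and then apply the crux to the first two letters $\overline{x_1}\,\overline{z_2}$ (taking $x=x_1$, $w=z_2$, and recalling $z_1=x_1 z_2 x_1$) to replace them by $\overline{z_1}\,\overline{z_2}$; because equality in $\ig(B)$ is a congruence, the untouched suffix $\overline{z_3}\cdots\overline{z_n}\,\overline{y}$ rides along unchanged. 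The second identity is the left-right dual: $B$ with reversed product is again a semilattice of rectangular bands and $\ig(B^{\mathrm{op}})\cong\ig(B)^{\mathrm{op}}$, so the dual crux $\overline{w}\,\overline{x}=\overline{w}\,\overline{xwx}$ together with the symmetric induction, now growing rightward from the leading $\overline{y}$, yields it.

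The only genuine obstacle is the crux lemma, and within it the structural observation that multiplication by $x$ preserves the relevant Green's classes inside $B_\beta$, equivalently $wxw=w$. Once that identity and the two resulting basic pairs $(wx,w)$ and $(x,wx)$ are established, the two-move chain, the induction, and the duality are all routine, with the congruence property of $\ig(B)$ doing the work of carrying the fixed suffix (respectively prefix) through each step.
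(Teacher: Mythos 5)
Your proof is correct and takes essentially the same route as the paper: the crux identity $\overline{x}\,\overline{w}=\overline{xwx}\,\overline{w}$ is established there from exactly the same two basic pairs, namely $(w,wx)$ via $wxw=w$ and $(x,wx)$ via $(wx)x=wx$, after which the paper likewise concludes by finite induction and duality. The only cosmetic difference is that you verify $wxw=w$ by a coordinate argument in $I\times\Lambda$, whereas the paper simply notes $wx\,\mathcal{R}\,w$ inside the rectangular band $B_\beta$.
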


\begin{proof}
First, we notice that for any $x\in B_\alpha, y\in B_\beta$ such that $\alpha\geq \beta$, we have $yx~\mathcal{R}~ y$, so that $(y,yx)$ is a basic pair and $(yx)y=y$. On the other hand, as $(yx)x=yx$, we have that $(x,yx)$ is a basic pair, so that $$\overline{x}~\overline{y}=\overline{x}~\overline{(yx)y}=\overline{x}~\overline{yx}~\overline{y}=\overline{xyx}~\overline{y}.$$
Thus, the first required equality follows from the above observation by finite induction. Dually, we can show the second one.
\end{proof}

\begin{coro}\label{almost normal form-simple}
Let $B=\mathcal{B}(Y; B_\alpha, \phi_{\alpha,\beta})$ be a normal band and let $\overline{x_1}~\cdots~\overline{x_n}\in \ig(B)$ be such that $x_{i}\in B_{\alpha_i}$, for all $i\in [1,n]$. Let $y\in B_\beta$ with $\beta\leq \alpha_i$, for all $i\in [1,n]$. Then in $\ig(B)$ we have $$\overline{x_1}~\cdots~\overline{x_n}~\overline{y}=\overline{x_1\phi_{\alpha_1,\beta}}~\cdots~\overline{x_n\phi_{\alpha_n,\beta}}~\overline{y}$$ and
$$\overline{y}~\overline{x_1}~\cdots~\overline{x_n}=\overline{y}~\overline{x_1\phi_{\alpha_1,\beta}}~\cdots~\overline{x_n\phi_{\alpha_n,\beta}}.$$
\end{coro}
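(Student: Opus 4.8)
The plan is to derive Corollary \ref{almost normal form-simple} directly from Lemma \ref{almost normal form} by exploiting the strong semilattice structure of a normal band, namely the structure maps $\phi_{\alpha,\beta}$. I would work entirely in the band $B=\mathcal{B}(Y;B_\alpha,\phi_{\alpha,\beta})$ and identify the product expressions appearing in Lemma \ref{almost normal form} with the images of the $x_i$ under the appropriate structure morphisms. Since the two displayed equations in the corollary are dual, I would prove only the first and remark that the second follows dually.

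The key computational step is to show that for each $i\in[1,n]$, the letter $\overline{x_i\cdots x_nyx_n\cdots x_i}$ occurring in Lemma \ref{almost normal form} equals $\overline{x_i\phi_{\alpha_i,\beta}}$ in $\ig(B)$; more precisely, I would show that the \emph{band elements} coincide, i.e. $x_i\cdots x_nyx_n\cdots x_i=x_i\phi_{\alpha_i,\beta}$ in $B$, which immediately forces equality of the barred letters. To do this I would use the defining formula for multiplication, $xy=(x\phi_{\alpha,\alpha\beta})(y\phi_{\beta,\alpha\beta})$, together with (B1) and (B2). The crucial observation is that $\beta\leq\alpha_i$ for every $i$, so all products with $y$ land in $B_\beta$, and every structure map composes via (B2) down to $\phi_{\alpha_i,\beta}$. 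I would compute $x_i\cdots x_nyx_n\cdots x_i$ by pushing every factor into $B_\beta$ through its structure map; since $B_\beta$ is a rectangular band, a product of the form $a\,c\,a$ with $a,c\in B_\beta$ collapses to $a$ (as rectangular bands satisfy $aca=a$). The factor $y$ and the inner occurrences of $x_{i+1},\dots,x_n$ all contribute elements of $B_\beta$ that are absorbed, leaving exactly $x_i\phi_{\alpha_i,\beta}$.

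Concretely, I would argue by a short induction (or a single normal-band identity) that $x_i\cdots x_nyx_n\cdots x_i$ lies in $B_\beta$ and is $\mathcal{R}$-related and $\mathcal{L}$-related to $x_i\phi_{\alpha_i,\beta}$ within the rectangular band $B_\beta$, hence equal to it. The normal-band identity $xyzx=xzyx$, equivalently the strong-semilattice description, guarantees that the order of the absorbed middle factors is irrelevant and that only the outermost $x_i$ (through $\phi_{\alpha_i,\beta}$) survives. Substituting these identifications into the right-hand side of the first equation of Lemma \ref{almost normal form} yields $\overline{x_1\phi_{\alpha_1,\beta}}\cdots\overline{x_n\phi_{\alpha_n,\beta}}\,\overline{y}$, as claimed.

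The main obstacle I anticipate is verifying the band identity $x_i\cdots x_nyx_n\cdots x_i=x_i\phi_{\alpha_i,\beta}$ cleanly, rather than getting lost in the iterated products. The safest route is to avoid manipulating long strings directly: instead, I would first establish the single-letter case via $x_iyx_i=x_i\phi_{\alpha_i,\beta}$ (using that $y\phi$ and the image $x_i\phi_{\alpha_i,\beta}$ lie in the rectangular band $B_\beta$, so that $x_iyx_i$ reduces by rectangularity), and then feed this into the product formula. The delicate point is bookkeeping the structure maps so that all the intermediate idempotents between $\alpha_i$ and $\beta$ compose correctly under (B2); once that is in hand, the rest is routine application of Lemma \ref{almost normal form}.
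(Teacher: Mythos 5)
Your proposal is correct and is exactly the argument the paper leaves implicit: the corollary is stated without proof precisely because, as you show, the long products $x_i\cdots x_ny x_n\cdots x_i$ appearing in Lemma \ref{almost normal form} collapse in a normal band to $x_i\phi_{\alpha_i,\beta}$ (push all factors into $B_\beta$ via the structure maps using (B2) and the product formula, then use the first-and-last rule in the rectangular band $B_\beta$), so the barred letters coincide as generators of $\ig(B)$. Your verification of the single-letter identity and the induction over the iterated products is sound, and the dual statement follows as you say.
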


\begin{coro}
Let $B=\bigcup_{\alpha\in Y} B_{\alpha}$ be a chain $Y$ of rectangular bands $B_\alpha$, $\alpha\in Y$. Then $\ig(B)$ is a regular semigroup.
\end{coro}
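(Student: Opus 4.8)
The plan is to show that when the semilattice $Y$ is a chain, the loss of uniqueness of normal forms documented earlier disappears in a strong enough sense that every element of $\ig(B)$ becomes regular. The key structural fact to exploit is that in a chain, for any two components $\alpha,\beta\in Y$ we have either $\alpha\leq\beta$ or $\beta\leq\alpha$; so for any word $\overline{x_1}\cdots\overline{x_n}$ with $x_i\in B_{\alpha_i}$, there is always a least element $\alpha_{t}=\min\{\alpha_1,\dots,\alpha_n\}$ among the components. First I would fix a word $w=\overline{x_1}\cdots\overline{x_n}$ and let $\beta=\alpha_t$ be this minimum component, with $y=x_t\in B_\beta$. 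Since $\beta\leq\alpha_i$ for every $i$, Corollary \ref{almost normal form-simple} — or, avoiding normality, Lemma \ref{almost normal form} directly — applies to the prefix $\overline{x_1}\cdots\overline{x_{t-1}}$ followed by $\overline{y}$, and dually to $\overline{y}$ followed by the suffix $\overline{x_{t+1}}\cdots\overline{x_n}$.

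The heart of the argument is to absorb everything onto the minimal letter. Using Lemma \ref{almost normal form} on the prefix I can rewrite
\[
\overline{x_1}\cdots\overline{x_{t-1}}\;\overline{y}=\overline{x_1\cdots x_{t-1}yx_{t-1}\cdots x_1}\cdots\overline{x_{t-1}yx_{t-1}}\;\overline{y},
\]
where every letter now lies in $B_\beta$ because $\beta$ is minimal and $B_\beta B_\gamma B_\beta\subseteq B_\beta$ in a chain. Applying the dual statement to the suffix gives a word in which every letter belongs to the single rectangular band $B_\beta$. Thus $w$ equals, in $\ig(B)$, a product $\overline{z_1}\cdots\overline{z_p}$ with all $z_j\in B_\beta$. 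At this point Corollary \ref{regular of IG(B)} finishes the job immediately: any word whose letters all come from a single $\mathcal{D}$-class $B_\beta$ is a regular element of $\ig(B)$. Since $w$ was arbitrary, $\ig(B)$ is regular.

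I expect the main obstacle to be the bookkeeping needed to justify that after the prefix and suffix rewrites \emph{all} letters genuinely land in $B_\beta$, rather than merely in bands $B_\gamma$ with $\gamma\geq\beta$. The delicate point is that Lemma \ref{almost normal form} produces letters of the form $x_i\cdots yx_i\cdots$ which, by the semilattice grading $B_\gamma B_\beta B_\gamma\subseteq B_{\gamma\beta}=B_\beta$ (using $\gamma\geq\beta$ in the chain), do indeed collapse into $B_\beta$; one must verify this using the defining relation $B_\alpha B_\beta\subseteq B_{\alpha\beta}$ together with $\alpha_i\beta=\beta$ for each $i$, which holds precisely because $\beta$ is the minimum in the chain. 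Once that grading computation is pinned down, the reduction to a single rectangular band and the appeal to Corollary \ref{regular of IG(B)} are routine, so the only real content is the choice of the minimal letter and the verification that the two applications of Lemma \ref{almost normal form} homogenise the entire word into $B_\beta$.
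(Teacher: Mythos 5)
Your proof is correct and is essentially the paper's own argument: since $Y$ is a chain, every word has a letter in the minimum component $B_\gamma$ of its $Y$-components, Lemma \ref{almost normal form} (applied to the prefix and, dually, to the suffix around that minimal letter) rewrites the word so that all letters lie in $B_\gamma$, and Corollary \ref{regular of IG(B)} then gives regularity. The paper compresses this into a single sentence; your prefix/suffix bookkeeping and the grading computation $B_{\alpha_i}\cdots B_\beta \cdots B_{\alpha_i}\subseteq B_\beta$ are exactly the details it leaves implicit.
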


\begin{proof}
Let $\overline{u_1}~\cdots~\overline{u_n}$ be an element in $\ig(B)$. From Lemma \ref{almost normal form} it follows that $\overline{u_1}~\cdots~\overline{u_n}$  can be written as an element of $\ig(B)$ in which all letters come from $B_\gamma$, where $\gamma$ is the minimum of the ordered $Y$-components $\{\alpha_1, \cdots, \alpha_n\}$, so that $\overline{u_1}~\cdots~\overline{u_n}$ is regular  by Lemma \ref{regular of IG(B)}.
\end{proof}

Given the above observations, we now introduce the idea of {\it almost normal form} for elements in $\ig(B)$.

\begin{definition}

An element $\overline{x_1}~\cdots~\overline{x_n}\in \overline{B}^+$ is said to be in {\it almost normal form} if there exists a sequence $$1\leq i_1<i_2<\cdots<i_{r-1}\leq n$$ with $$\{x_1, \cdots, x_{i_1}\}\subseteq B_{\alpha_1}, \{x_{i_1+1}, \cdots, x_{i_2}\}\in B_{\alpha_2}, \cdots, \{x_{i_{r-1}+1},\cdots x_{n}\}\subseteq B_{\alpha_r}$$ where $\alpha_i, \alpha_{i+1}$ are incomparable for all $i\in[1,r-1].$

\end{definition}

It is obvious that the element $\overline{x_1}~\cdots~\overline{x_{n}}\in \overline{B}^+$  above has 
$Y$-length $r$, ordered $Y$-components $\alpha_1, \cdots, \alpha_r$, left to right significant indices $i_1, i_2, \cdots, i_{r-1}, i_r=n$ and right to left significant indices $1, i_{1}+1, \cdots,  i_{r-2}+1, i_{r-1}+1$.  Note that, in general, the almost normal forms of elements of $\ig(B)$ are not unique. Further, if $\overline{x_1}~\cdots~\overline{x_{n}}=\overline{y_1}~\cdots~\overline{y_{m}}$ are in almost normal form, then they have the same $Y$-length and ordered $Y$-components, but the  significant indices of the expressions on each side can  differ.

\medskip

The next result is immediate from the definition of significant indices and Lemma \ref{almost normal form}.

\begin{lem}
Every element of $\ig(B)$ can be written in almost normal form.
\end{lem}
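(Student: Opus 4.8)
The plan is to show that every element of $\ig(B)$ can be written in almost normal form by combining the existence of left to right significant indices with the rewriting identities of Lemma~\ref{almost normal form}. Given a word $w=\overline{x_1}~\cdots~\overline{x_n}\in \overline{B}^+$ with $x_i\in B_{\alpha_i}$, I would first extract its left to right significant indices $i_1<\cdots<i_r$ and the associated ordered $Y$-components $\alpha_{i_1},\cdots,\alpha_{i_r}$, which by the discussion preceding Lemma~\ref{indices are equal} are pairwise consecutively incomparable. The goal is to rewrite $w$ so that its letters group into $r$ consecutive blocks, each block consisting of letters lying in a single $\mathcal{D}$-class $B_{\alpha_{i_t}}$, with adjacent blocks incomparable; this is exactly the definition of almost normal form.

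First I would argue by induction on the $Y$-length $r$. The base case $r=1$ means all of $\alpha_1,\cdots,\alpha_n$ have a common lower bound that is attained, i.e.\ there is a minimum component $\gamma=\alpha_{i_1}$ with $\gamma\leq \alpha_j$ for all $j$; applying Lemma~\ref{almost normal form} repeatedly (pulling the $\gamma$-letter through on each side) rewrites $w$ as a word all of whose letters lie in $B_\gamma$, which is trivially in almost normal form with a single block. For the inductive step, I would isolate the first block: the indices $1,\cdots,i_1$ together with the structure given by $k_1$ (the largest index with $\alpha_{i_1}\leq \alpha_{i_1},\cdots,\alpha_{k_1}$) show that $\alpha_{i_1}$ sits below all of $\alpha_1,\cdots,\alpha_{k_1}$, so Lemma~\ref{almost normal form} lets me push $x_{i_1}$ through the preceding letters $\overline{x_1}~\cdots~\overline{x_{i_1-1}}$ and then through $\overline{x_{i_1+1}}~\cdots~\overline{x_{k_1}}$, converting the entire prefix $\overline{x_1}~\cdots~\overline{x_{k_1}}$ into letters of $B_{\alpha_{i_1}}$. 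The tail $\overline{x_{k_1+1}}~\cdots~\overline{x_n}$ has left to right significant indices $i_2,\cdots,i_r$ and hence smaller $Y$-length $r-1$, so the induction hypothesis puts it in almost normal form. Concatenating the single $B_{\alpha_{i_1}}$-block with the inductively obtained form gives $w$ in almost normal form, and the incomparability of $\alpha_{i_1}$ with $\alpha_{i_2}$ (established in the discussion of significant indices) guarantees the adjacent-block incomparability required by the definition.

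The main obstacle I anticipate is bookkeeping rather than conceptual: I must verify that applying Lemma~\ref{almost normal form} to fold the prefix into $B_{\alpha_{i_1}}$ does not disturb the significant-index structure of the tail, and that the rewriting genuinely respects the block boundary at $k_1$. Specifically, Lemma~\ref{almost normal form} requires that the letter being pulled through lies \emph{below} every letter it passes, so I need $\alpha_{i_1}\leq\alpha_j$ for $j\leq k_1$, which is precisely what the definition of $i_1$ and $k_1$ furnishes; I would spell this out carefully to ensure the hypotheses of the lemma are met at each application. One must also handle the degenerate cases where $i_1=1$ (empty prefix to the left) or $k_1=n$ (the whole word collapses to one block, reducing to the base case) cleanly. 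Once these are checked, the result follows by a straightforward finite induction, and I would close by simply noting, as the statement anticipates, that this is immediate from the definition of significant indices together with Lemma~\ref{almost normal form}.
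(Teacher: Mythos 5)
Your proof is correct and follows essentially the same route as the paper, which simply declares the result ``immediate from the definition of significant indices and Lemma~\ref{almost normal form}'': the segments $[k_{t-1}+1,k_t]$ determined by the significant indices each have $\alpha_{i_t}$ below every component in the segment, so the two identities of Lemma~\ref{almost normal form} fold each segment into $B_{\alpha_{i_t}}$, and consecutive incomparability of the $\alpha_{i_t}$ gives the almost normal form. Your induction on the $Y$-length is just a tidy way of organising this segment-by-segment folding, and your checks (locality of the rewriting to the prefix, the tail inheriting significant indices $i_2,\cdots,i_r$) are exactly the bookkeeping the paper leaves implicit.
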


We have the following lemma regarding the almost normal form of the product of two almost normal forms.

\begin{lem}\label{product}
Let $\overline{x_1}~\cdots~\overline{x_n}\in \ig(B)$ be in almost normal form with $Y$-length $r$, left to right significant indices $i_1, \cdots, i_r=n$ and ordered $Y$-components $\alpha_1, \cdots, \alpha_r$, let $\overline{y_1}~\cdots~\overline{y_m}\in \ig(B)$ be in almost normal form with  $Y$-length $s$, left to right significant indices $l_1, \cdots, l_s=m$ and ordered $Y$-components $\beta_1, \cdots, \beta_s$. Then $($with $i_0=0$$)$

\medskip

(i) $\alpha_r$ and $\beta_1$ incomparable implies that $\overline{x_1}~\cdots~\overline{x_{i_r}}~\overline{y_1}~\cdots~\overline{y_{l_s}}$ is in almost normal form;

\medskip

(ii) $\alpha_r\geq \beta_1$ implies $$\overline{x_1}\cdots \overline{x_{i_t}}~\overline{x_{i_t+1}\cdots x_{i_r}y_1x_{i_r}\cdots x_{i_t+1}}~\cdots~\overline{x_{i_r}y_1x_{i_r}}~\overline{y_1}~\cdots~\overline{y_{l_s}}$$ is an almost normal form of
 the product $\overline{x_1}~\cdots~\overline{x_{i_r}}~\overline{y_1}~\cdots~\overline{y_{l_s}},$ for some $t\in [0,r-1]$ such that $\alpha_r, \cdots, \alpha_{t+1}\geq \beta_1$ and $t=0$ or $\alpha_t, \beta_1$ are incomparable;

\medskip

(iii) $\alpha_r\leq \beta_1$ implies $$\overline{x_1}~\cdots~\overline{x_{i_r}}~\overline{y_1x_{i_r}y_1}~\cdots~\overline{y_{l_v}\cdots y_1x_{i_r}y_1\cdots y_{l_v}}~\overline{y_{l_v+1}}~\cdots~\overline{y_{l_s}}$$ is an almost normal form of  the product $\overline{x_1}~\cdots~\overline{x_{i_r}}~\overline{y_1}~\cdots~\overline{y_{l_s}}$ for some $v\in [1,s]$ such that $\alpha_r\leq\beta_1, \cdots, \beta_v$ and $v=s$ or $\beta_{v+1},\alpha_r$ are incomparable.

\end{lem}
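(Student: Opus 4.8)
The plan is to compute the product $\overline{x_1}\cdots\overline{x_{i_r}}~\overline{y_1}\cdots\overline{y_{l_s}}$ by focusing on the single junction where the two almost normal forms meet, namely the block of $x$'s lying in $B_{\alpha_r}$ abutting the block of $y$'s lying in $B_{\beta_1}$, and then observing that the rest of each word is untouched. The trichotomy on $\alpha_r$ versus $\beta_1$ governs what happens. First I would dispose of case (i): if $\alpha_r,\beta_1$ are incomparable, then by the very definition of significant indices the concatenation $\overline{x_1}\cdots\overline{x_{i_r}}~\overline{y_1}\cdots\overline{y_{l_s}}$ already displays blocks in $B_{\alpha_1},\dots,B_{\alpha_r},B_{\beta_1},\dots,B_{\beta_s}$ with each consecutive pair of components incomparable (the only new adjacency being $\alpha_r,\beta_1$, which is incomparable by hypothesis), so it is in almost normal form with nothing to prove.

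For case (ii), with $\alpha_r\ge\beta_1$, the idea is to push the single letter $\overline{y_1}\in B_{\beta_1}$ leftwards through the trailing $x$-blocks that sit above $\beta_1$. The engine for this is Lemma~\ref{almost normal form}: applied to the block $\overline{x_{i_t+1}}\cdots\overline{x_{i_r}}$ (all in components $\ge\beta_1$) against $\overline{y_1}$, it rewrites $\overline{x_{i_t+1}}\cdots\overline{x_{i_r}}~\overline{y_1}$ as the stated conjugated product ending in $\overline{y_1}$, with every new letter $\overline{x_{i_t+1}\cdots x_{i_r}y_1 x_{i_r}\cdots x_{i_t+1}}$ lying in $B_{\beta_1}$ since $\beta_1$ is the minimum of the components involved. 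The cutoff index $t$ is chosen exactly so that $\alpha_r,\dots,\alpha_{t+1}\ge\beta_1$ and either $t=0$ or $\alpha_t,\beta_1$ incomparable; this is forced, and I would note that absorption cannot proceed past $\alpha_t$ precisely because $\alpha_t\not\ge\beta_1$. The remaining verification is that the resulting word is genuinely in almost normal form: the prefix $\overline{x_1}\cdots\overline{x_{i_t}}$ is unchanged with its incomparable adjacencies intact, the newly formed block together with $\overline{y_1}$ all lie in $B_{\beta_1}$ and hence form a single component block, and the suffix $\overline{y_2}\cdots\overline{y_{l_s}}$ retains its block structure; one must only check the two new junctions, namely $\alpha_t$ against $\beta_1$ (incomparable, or absent when $t=0$) and $\beta_1$ against $\beta_2$ (incomparable since $\overline{y_1}\cdots\overline{y_{l_s}}$ was in almost normal form).

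Case (iii), with $\alpha_r\le\beta_1$, is the dual: here I would push the block $\overline{x_{i_r}}$ (more precisely its trailing letter in $B_{\alpha_r}$) rightwards into the leading $y$-blocks using the dual form of Lemma~\ref{almost normal form}, absorbing as far as the components $\beta_1,\dots,\beta_v\ge\alpha_r$ permit and stopping at $v$ with $v=s$ or $\beta_{v+1},\alpha_r$ incomparable. The conjugated letters $\overline{y_{l_j}\cdots y_1 x_{i_r}y_1\cdots y_{l_j}}$ all lie in $B_{\alpha_r}$, the minimum component, so they merge with the existing $x$-blocks of component $\alpha_r$ into one block, and the same junction check as before confirms the almost normal form.

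The main obstacle I expect is the bookkeeping at the junction rather than any conceptual difficulty: one must verify carefully that after the rewriting the component of every newly created letter is indeed $\beta_1$ (in (ii)) or $\alpha_r$ (in (iii))---which follows because these are the respective minima and products in a band of rectangular bands land in the meet component---and that exactly one new component block is created so that the incomparability condition defining almost normal form is preserved at both new adjacencies. Establishing that the stopping indices $t$ and $v$ are well-defined and that absorption halts exactly there, using the maximality built into the significant indices, is the delicate part; the algebraic identities themselves are immediate from Lemma~\ref{almost normal form}.
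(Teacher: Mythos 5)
Your proposal is correct and takes essentially the same route as the paper's proof: (i) is immediate from the definition, while in (ii) and (iii) the junction letter is absorbed through the adjacent blocks via Lemma~\ref{almost normal form} (block by block in the paper, in one application to the whole suffix in your version), stopping at the first component that fails to dominate, with the new letters landing in the meet component and merging into a single block. The one step you assert rather than prove --- that at the stopping index $\alpha_t$ and $\beta_1$ must be \emph{incomparable}, i.e.\ that $\alpha_t<\beta_1$ cannot occur --- is in fact the only explicit argument the paper gives: if $\alpha_t<\beta_1$ then $\alpha_t<\beta_1\leq\alpha_{t+1}$ forces $\alpha_t<\alpha_{t+1}$, contradicting the incomparability of consecutive ordered $Y$-components of an almost normal form; so this follows from that incomparability, not from any ``maximality built into the significant indices'' as you suggest.
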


\begin{proof}
Clearly, the statement (i) is true. We now aim to show (ii). Since $\alpha_r\geq\beta_1$, we have $$\overline{x_{i_{r-1}+1}}~\cdots~\overline{x_{i_r}}~\overline{y_1}=\overline{x_{i_{r-1}+1}\cdots x_{i_r}y_1x_{i_r}\cdots x_{i_{r-1}+1}}~\cdots~\overline{x_{i_r}y_1x_{i_r}}~\overline{y_1}$$ by Corollary \ref{almost normal form}. Consider $\alpha_{r-1}$ and $\beta_1$, then we either have $\alpha_{r-1}\geq\beta_1$ or they are incomparable, as $\alpha_{r-1}< \beta_1$ would imply $\alpha_r> \alpha_{r-1}$, which contradicts the almost normal form of $\overline{x_1}~\cdots~\overline{x_{i_r}}$. By finite induction we have that $$\overline{x_1}\cdots \overline{x_{i_t}}~\overline{x_{i_t+1}\cdots x_{i_r}y_1x_{i_r}\cdots x_{i_t+1}}~\cdots~\overline{x_{i_r}y_1x_{i_r}}~\overline{y_1}~\cdots~\overline{y_{l_s}}$$ is an almost normal form of the product $\overline{x_1}~\cdots~\overline{x_{i_r}}~\overline{y_1}~\cdots~\overline{y_{l_s}},$ for some $t\in [0,r-1],$ such that $\alpha_r, \cdots, \alpha_{t+1}\geq \beta_1$ and $t=0$ or $\alpha_t, \beta_1$ are incomparable. Similarly, we can show (iii).
\end{proof}

\begin{them}\label{IG(band)}
Let  $B$ be a semilattice $Y$ of rectangular bands $B_\alpha,\alpha\in Y.$ Then $\ig(B)$ is a weakly abundant semigroup with the congruence condition.
\end{them}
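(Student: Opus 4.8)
The plan is to show that every element of $\ig(B)$ is $\widetilde{\mathcal{R}}$-related to an idempotent (and dually $\widetilde{\mathcal{L}}$-related to one), and then to read off the congruence condition from the explicit description of that idempotent. Write a given element in almost normal form $w=\overline{x_1}\cdots\overline{x_n}$ with $x_i\in B_{\alpha_i}$, so that the first block $x_1,\dots,x_{i_1}$ lies in $B_{\alpha_1}$, where $\alpha_1$ is the first ordered $Y$-component. I claim $w\,\widetilde{\mathcal{R}}\,\overline{x_1}$. Since the idempotents of $\ig(B)$ are exactly $\overline{E}\cong B$, and for idempotents $\overline{g}\,\overline{x_1}=\overline{x_1}$ holds in $\ig(B)$ if and only if $gx_1=x_1$ in $B$ (the identity $gx_1=x_1$ makes $(g,x_1)$ a basic pair), it suffices to prove that $\overline{g}\,w=w$ if and only if $gx_1=x_1$, for every $g\in B$. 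One implication is immediate: if $gx_1=x_1$ then $\overline{g}\,w=\overline{g}\,\overline{x_1}\,\overline{x_2}\cdots\overline{x_n}=\overline{x_1}\cdots\overline{x_n}=w$.

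For the converse, suppose $\overline{g}\,w=w$ and put $\gamma=\theta(g)$. Applying the morphism $\overline{\theta}\colon\ig(B)\to\ig(Y)$ of Corollary~\ref{direct} and comparing ordered $Y$-components through Lemma~\ref{indices are equal}, I first deduce $\gamma\geq\alpha_1$, since otherwise prepending $\overline{g}$ would alter the first ordered $Y$-component of $w$. Consequently the first left to right significant index of the word $\overline{g}\,\overline{x_1}\cdots\overline{x_n}$ is $i_1+1$, with significant component $\alpha_1$, so Corollary~\ref{important} applied to the equality $\overline{g}\,w=w$ gives $gx_1\cdots x_{i_1}\,\mathcal{R}\,x_1\cdots x_{i_1}$ in $B$; as $B_{\alpha_1}$ is a rectangular band this forces $gx_1\,\mathcal{R}\,x_1$. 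On the other hand $\gamma\geq\alpha_1$ places $gx_1$ in $B_{\alpha_1}$, and a direct rectangular-band computation (using $x_1gx_1=x_1$) shows $gx_1\,\mathcal{L}\,x_1$. Hence $gx_1\,\mathcal{H}\,x_1$, and since the $\mathcal{H}$-class of an idempotent is trivial, $gx_1=x_1$, as required. The dual argument yields $w\,\widetilde{\mathcal{L}}\,\overline{x_n}$, so every $\widetilde{\mathcal{R}}$-class and every $\widetilde{\mathcal{L}}$-class contains an idempotent and $\ig(B)$ is weakly abundant.

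For the congruence condition it suffices, by transitivity of $\widetilde{\mathcal{R}}$ and the fact that $\ig(B)$ is generated by $\overline{E}$, to prove that $a\,\widetilde{\mathcal{R}}\,b$ implies $\overline{g}\,a\,\widetilde{\mathcal{R}}\,\overline{g}\,b$ for a single generator $\overline{g}$. Writing $a,b$ in almost normal form with first letters $a_1,b_1$, the weak abundancy result gives $a\,\widetilde{\mathcal{R}}\,\overline{a_1}$ and $b\,\widetilde{\mathcal{R}}\,\overline{b_1}$, hence $\overline{a_1}\,\widetilde{\mathcal{R}}\,\overline{b_1}$; by Lemma~\ref{idempotents} this means $a_1\,\mathcal{R}\,b_1$ in $B$, and in particular $\theta(a_1)=\theta(b_1)=\alpha_1$. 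I then determine the first letter of an almost normal form of $\overline{g}\,a$ via Lemma~\ref{product}. If $\theta(g)$ is incomparable to $\alpha_1$ or $\theta(g)<\alpha_1$, that first letter is simply $g$, so $\overline{g}\,a$ and $\overline{g}\,b$ are both $\widetilde{\mathcal{R}}$-related to $\overline{g}$. If $\theta(g)\geq\alpha_1$, then by Lemma~\ref{product} the first letter is $ga_1g$ (respectively $gb_1g$); using that $\mathcal{R}$ is a left congruence on $B$ together with $ga_1\,\mathcal{R}\,ga_1g$ (which follows from $g\,ga_1=ga_1$) I obtain $ga_1g\,\mathcal{R}\,gb_1g$, whence $\overline{g}\,a\,\widetilde{\mathcal{R}}\,\overline{ga_1g}\,\widetilde{\mathcal{R}}\,\overline{gb_1g}\,\widetilde{\mathcal{R}}\,\overline{g}\,b$. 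In each case $\overline{g}\,a\,\widetilde{\mathcal{R}}\,\overline{g}\,b$, so $\widetilde{\mathcal{R}}$ is a left congruence, and dually $\widetilde{\mathcal{L}}$ is a right congruence.

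The main obstacle is the converse implication in the weak abundancy step: passing from the single relation $\overline{g}\,w=w$ in $\ig(B)$ to the pointwise identity $gx_1=x_1$ in $B$. The relation only transmits $\mathcal{R}$-information through Corollary~\ref{important}, so the crux is to recover the missing $\mathcal{L}$-coordinate; this is precisely supplied by the inequality $\gamma\geq\alpha_1$ extracted from the $\ig(Y)$-image, which lands $gx_1$ in the same rectangular band as $x_1$ and forces $gx_1\,\mathcal{L}\,x_1$ automatically. Once this description of the $\widetilde{\mathcal{R}}$-class by a first letter is in hand, the congruence condition is comparatively routine, the only delicate point being the trichotomy on how $\theta(g)$ compares with $\alpha_1$.
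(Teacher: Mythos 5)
Your proof is correct, and its first half coincides with the paper's own argument: to show $w=\overline{x_1}\cdots\overline{x_n}\,\widetilde{\mathcal{R}}\,\overline{x_1}$, both you and the paper push the hypothesis $\overline{g}\,w=w$ through $\overline{\theta}$ into $\ig(Y)$, use uniqueness of normal forms there to get $\gamma\geq\alpha_1$, and then apply Corollary~\ref{important} to obtain $gx_1\cdots x_{i_1}\,\mathcal{R}\,x_1\cdots x_{i_1}$, hence $gx_1\,\mathcal{R}\,x_1$ in $B$. Your subsequent detour through the $\mathcal{L}$-coordinate and triviality of $\mathcal{H}$-classes is unnecessary (though valid): for idempotents, $gx_1\,\mathcal{R}\,x_1$ already forces $gx_1=(gx_1)x_1=x_1$, which is exactly how the paper concludes; so the ``missing $\mathcal{L}$-coordinate'' you identify as the crux is not actually needed. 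Where you genuinely diverge is the congruence condition. The paper left-multiplies by an arbitrary element $\overline{z_1}\cdots\overline{z_s}$ in almost normal form and runs a case analysis via Lemma~\ref{product}, including the subcase $v=0$ where one must verify $z_1\cdots z_sx_1z_s\cdots z_1\,\mathcal{R}\,z_1\cdots z_sy_1z_s\cdots z_1$ in $B$. You instead observe that, since $\ig(B)$ is generated by $\overline{E}$, it suffices to check that left multiplication by a \emph{single} generator $\overline{g}$ preserves $\widetilde{\mathcal{R}}$, and your trichotomy on how $\theta(g)$ compares with $\alpha_1$ is then the one-letter instance of the paper's analysis; this iteration-over-generators reduction buys a visibly shorter case analysis at no cost, and is a neat simplification of the paper's proof. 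One blemish: your justification of $ga_1\,\mathcal{R}\,ga_1g$ (``follows from $g\,ga_1=ga_1$'') is garbled; the correct reason is the band fact that $yx\,\mathcal{R}\,y$ (equivalently $yxy=y$) whenever $y\in B_\alpha$, $x\in B_\beta$ and $\beta\geq\alpha$, which is precisely the observation opening the proof of Lemma~\ref{almost normal form}, applied with $y=ga_1$ and $x=g$. Since that fact is true and already in the paper, this is cosmetic rather than a gap.
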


\begin{proof}
Let $\overline{x_1}~\cdots~\overline{x_n}\in \ig(B)$ be in almost normal form with $Y$-length $r$, left to right significant indices $i_1, \cdots, i_r=n$, and ordered $Y$-components $\alpha_1, \cdots, \alpha_r$. Clearly $\overline{x_1}~\overline{x_1}~\cdots~\overline{x_n}=\overline{x_1}~\cdots~\overline{x_n}.$ Let $e\in B_\delta$ be such that $\overline{e}~\overline{x_1}~\cdots~\overline{x_n}=\overline{x_1}~\cdots~\overline{x_n}.$ By Corollary \ref{direct},  applying $\overline{\theta}$, we have that $\overline{\delta}~\overline{\alpha_1}~\cdots~\overline{\alpha_r}=\overline{\alpha_1}~\cdots~\overline{\alpha_r}.$
It follows from Lemma \ref{uniqueness} that $\delta\geq \alpha_1$, so that by Corollary \ref{important} we have $$ex_1\cdots x_{i_1}~\mathcal{R}~x_1\cdots x_{i_1}.$$ On the other hand, $x_1\cdots x_{i_1}~\mathcal{R}~x_1$  so that $ex_1~\mathcal{R}~x_1$, and we have $x_1\leq_{\mathcal{R}} e$. Thus  $\overline{e}~\overline{x_1}=\overline{ex_1}=\overline{x_1}.$ Therefore $\overline{x_1}~\cdots~\overline{x_n}~\mathcal{\widetilde{R}}~\overline{x_1}.$
Dually, $\overline{x_1}~\cdots~\overline{x_n}~\mathcal{\widetilde{L}}~\overline{x_n},$ so that $\ig(B)$ is a weakly abundant semigroup as required.

\medskip

Next we show that $\ig(B)$ satisfies the congruence condition.

\medskip

Let $\overline{x_1}~\cdots~\overline{x_n}\in \ig(B)$ be defined as above and let $\overline{y_1}~\cdots~\overline{y_m}\in \ig(B)$ be in almost normal form with $Y$-length $u$, left to right significant indices $l_1, \cdots, l_u=m$ and ordered $Y$-components $\beta_1, \cdots, \beta_u.$ From the above and a comment in Section \ref{sec:intro}, we have $\overline{x_1}~\cdots~\overline{x_n}~\mathcal{\widetilde{R}}~\overline{y_1}~\cdots~\overline{y_m}$ if and only if $x_1~\mathcal{R}~y_1.$ Suppose now that $x_1~\mathcal{R}~y_1,$  so that $\alpha_1=\beta_1.$ Let
$\overline{z_1}~\cdots~\overline{z_s}\in \ig(B)$, where, without loss of generality, we can assume it is in almost normal form with $Y$-length $t,$ left to right significant indices $j_1, \cdots, j_t=s$, and ordered $Y$-components $\gamma_1, \cdots, \gamma_t.$  We aim to show that $$\overline{z_1}~\cdots~\overline{z_s}~\overline{x_1}~\cdots~\overline{x_n}~\mathcal{\widetilde{R}}~ \overline{z_1}~\cdots~\overline{z_s}~\overline{y_1}~\cdots~\overline{y_m}.$$

We consider the following three cases.

\medskip

(i) If $\alpha_1=\beta_1, \gamma_t$ are incomparable, then it is clear that   $$\overline{z_1}~\cdots~\overline{z_s}~\overline{x_1}~\cdots~\overline{x_n}\mbox{~and~} \overline{z_1}~\cdots~\overline{z_s}~\overline{y_1}~\cdots~\overline{y_m}$$ are in almost normal form, so clearly we have $$\overline{z_1}~\cdots~\overline{z_s}~\overline{x_1}~\cdots~\overline{x_n}~\mathcal{\widetilde{R}}~\overline{z_1}~\mathcal{R}~ \overline{z_1}~\cdots~\overline{z_s}~\overline{y_1}~\cdots~\overline{y_m}.$$

(ii) If $\beta_1=\alpha_1\leq \gamma_1$, then by Lemma \ref{product} $$\overline{z_1}~\cdots~\overline{z_s}~\overline{x_1}~\cdots~\overline{x_n}=
\overline{z_1}~\cdots~\overline{z_{j_{v}}}~\overline{z_{j_{v}+1}\cdots z_sx_1z_s\cdots z_{j_{v}+1}}~\cdots~\overline{z_sx_1z_s}~\overline{x_1}~\cdots~\overline{x_n}$$
and $$\overline{z_1}~\cdots~\overline{z_s}~\overline{y_1}~\cdots~\overline{y_m}=
\overline{z_1}~\cdots~\overline{z_{j_{v}}}~\overline{z_{j_{v}+1}\cdots z_sy_1z_s\cdots z_{j_{v}+1}}~\cdots~\overline{z_sy_1z_s}~\overline{y_1}~\cdots~\overline{y_m}$$
where $v\in [0,t-1]$, $\gamma_{v+1}, \cdots, \gamma_t\geq \alpha_1=\beta_1$ and $\gamma_v, \beta_1$ are incomparable or $v=0$. Note that the right hand sides are in almost normal form.

If $v\geq 1$, then clearly the required result is true, as the above two almost normal forms begin with the same idempotent. If $v=0$, then we need to show that $$z_1\cdots z_sx_1z_{s}\cdots z_{1}~\mathcal{R}~z_1\cdots z_sy_1z_{s}\cdots z_{1}.$$ 
Since $x_1~\mathcal{R}~y_1$, it follows from the structure of $B$ that $$z_1\cdots z_sx_1z_{s}\cdots z_{1}~\mathcal{R}~z_{1}\cdots z_sx_1~\mathcal{R}~z_1\cdots z_sy_1~\mathcal{R}~z_1\cdots z_sy_1z_{s}\cdots z_{1}$$ as required.

(iii) If $\beta=\alpha_1\geq \gamma_1$, then by Lemma \ref{product} $$\overline{z_1}~\cdots~\overline{z_s}~\overline{x_1}~\cdots~\overline{x_n}=
\overline{z_1}~\cdots~\overline{z_{s}}~\overline{x_1z_sx_1}~\cdots~\overline{x_{i_k}\cdots x_1z_sx_1\cdots x_{i_k}}~\overline{x_{i_k+1}}~\cdots~\overline{x_n}$$
and $$\overline{z_1}~\cdots~\overline{z_s}~\overline{y_1}~\cdots~\overline{y_m}=
\overline{z_1}~\cdots~\overline{z_{s}}~\overline{y_1z_sy_1}~\cdots~\overline{y_{l_p}\cdots y_1z_sy_1\cdots y_{l_p}}~\overline{y_{l_p+1}}~\cdots~\overline{y_m},$$
where $k\in [1,r],$ $\alpha_1, \cdots, \alpha_k\geq \gamma_1$, and $\alpha_{k+1}, \gamma_1$ are incomparable or $k=r,$ and $p\in [1,u],$ $\beta_1, \cdots, \beta_p\geq \gamma_1$, and $\beta_{p+1}, \gamma_1$ are incomparable or $p=u.$ Clearly, the right hand sides are in almost normal form, so that
$$\overline{z_1}~\cdots~\overline{z_s}~\overline{x_1}~\cdots~\overline{x_n}~\mathcal{\widetilde{R}}~\overline{z_1}~\mathcal{\widetilde{R}}~ \overline{z_1}~\cdots~\overline{z_s}~\overline{y_1}~\cdots~\overline{y_m}.$$

Similarly, we can show that $\widetilde{\mathcal{L}}$ is a right congruence, so that $\ig(B)$ is a weakly abundant semigroup satisfying the congruence condition. This completes the proof.\end{proof}

We finish this section by constructing a band $B$ for which $\ig(B)$ is not an abundant semigroup.

\begin{ex} {\rm Let $B=B_{\alpha}\cup B_{\beta}\cup B_\gamma$ be a band with semilattice decomposition structure and multiplication table defined by
\[\begin{array}{cc}
\begin{array}{c|cccc}
  & a & b & x & y\\ \hline
a & a & y & x & y \\
b & y & b & x & y\\
x & x & y & x & y \\
y & y & y & x & y
\end{array}&

\begin{tikzpicture}[scale=0.5]
\node (b) at (-3.5,0) {$B_\alpha$};
\node (x) at (-2,0) {\boxed{a}};
\node (y) at (2,0) {\boxed{b}};
\node (c) at (3.0,0) {$\, B_\beta$};
\node (z) at (0,-2) {\begin{tabular}{ r|c|c| }
 \cline{2-3}
 & $x$ & $y$ \\
 \cline{2-3}
 \end{tabular}};
\node (a) at (0,-3.0) {$B_\gamma$};
\path[-,font=\scriptsize,>=angle 60]
(x) edge node[right]{} (z)
(y) edge node[above]{} (z);
\end{tikzpicture}
\end{array}\]

First, it is easy to check that $B$ is indeed a semigroup. We now show that $\ig(B)$ is not abundant by arguing that the element  $\overline{a}~\overline{b}\in \ig(B)$ is not $\mathcal{R}^*$-related to any idempotent of $\ig(B)$. It follows from Theorem \ref{IG(band)} that $\overline{a}~\overline{b}~\mathcal{\widetilde{R}}~\overline{a}.$  However, $\overline{a}~\overline{b}$ is not $\ar^*$-related to $\overline{a}$, because $$\overline{x}~\overline{a}~\overline{b}=\overline{y}=\overline{y}~\overline{a}~\overline{b} \mbox{~but~} \overline{x}~\overline{a}=\overline{x}\neq \overline{y}=\overline{y}~\overline{a}.$$ From Lemma \ref{observation1}, $\overline{a}~\overline{b}$ is not $\mathcal{R}^*$-related to any idempotent of $\overline{B}$, and hence $\ig(B)$ is not an abundant semigroup.}
\end{ex}

\section{Free idempotent generated generated semigroups over locally large bands}\label{sec:locally large}

We recall from the Introduction that if $B=\bigcup_{\alpha\in Y} B_{\alpha}$ is a semilattice $Y$ of rectangular bands $B_\alpha, \alpha\in Y$, then $B$ is  {\em locally large}  if for all $\alpha, \beta\in Y$ with $\beta>\alpha$, $u\in B_\alpha$ and $v\in B_\beta$, we have $uv=vu=u$. Clearly $B$ is locally large if and only if for any $e\in B$, the local subsemigroup
$eBe$ is as large as is possible in the sense that for $e\in B_{\alpha}$, we have $eBe=\{ e\}\cup \bigcup_{\beta<\alpha}B_{\beta}$.  
In this section we show that  the word problem of $\ig(B)$ is solvable for a locally large band $B$. Further, in Section \ref{sec:(P)}, we will show that for any such $B$, the semigroup $\ig(B)$ is abundant.

It is easy to see that if $B$ is locally large, then for any $\alpha,\beta\in Y$  with $\alpha<\beta$, $u\in B_\alpha$ and $v\in B_\beta$, the products $uv$ and $vu$ are basic. We also note that any locally large band $B$ lies in the variety of
{\em regular bands}, that is, it satisfies the identity $xyxzx=xyzx$. To see this, let $x\in B_{\alpha}, y\in B_{\beta}$ and $z\in B_{\gamma}$. If $\alpha=\alpha\beta\gamma$, clearly $xyxzx=x=xyzx$. Otherwise, $\alpha>\alpha\beta\gamma$ and
\[xyxzx=x(yxz)(yxz)x=xyx(zyx)zx=xy(zyx)zx=(xyzy)xzx=xyzyzx=xyzx.\]
On the other hand, it is easy to see that if  $B$ is locally large, then  $B$ is normal if and only if $B_{\alpha}$ is trivial for all non-maximal $\alpha\in Y$. 

\begin{lem}\label{locally large-1}
Let $B$ be a locally large band, and let $\overline{x_1}~\cdots~\overline{x_n}$, $\overline{y_1}~\cdots~\overline{y_m}\in \ig(B)$ have left to right significant indices $i_1,\cdots,i_r$ and $j_1,\cdots,j_r,$ respectively. If $\overline{x_1}~\cdots~\overline{x_n}=\overline{y_1}~\cdots~\overline{y_m},$ then for any $l\in [1,r]$, $\overline{x_1}~\cdots~\overline{x_{i_l}}=\overline{y_1}~\cdots~\overline{y_{j_l}}.$
\end{lem}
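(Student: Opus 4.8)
The plan is to induct along a sequence of elementary transitions connecting the two words, showing that each single step preserves the relevant prefix, where the locally large hypothesis is exactly what allows us to absorb the ``tail'' produced by Lemma~\ref{useful}.

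First I would record the global information. Since $\overline{x_1}\cdots\overline{x_n}=\overline{y_1}\cdots\overline{y_m}$ in $\ig(B)$, Lemma~\ref{indices are equal} guarantees that the two words have the same $Y$-length $r$ and the same ordered $Y$-components; this is consistent with the statement's presupposition that both sides carry $r$ left to right significant indices. More generally, any word of $\overline{B}^+$ equal to these in $\ig(B)$ has $Y$-length $r$, so the phrase ``prefix up to the $l$-th significant index'' makes sense for every intermediate word we shall meet.

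Next I would reduce to a single elementary step. As the equality holds in $\ig(B)$, there is a finite chain $w_0\sim w_1\sim\cdots\sim w_k$ with $w_0=\overline{x_1}\cdots\overline{x_n}$ and $w_k=\overline{y_1}\cdots\overline{y_m}$, each $\sim$ being a single split or merge as in Lemma~\ref{reduction systems}. Fix $l\in[1,r]$. I claim that for every $t$ the prefix of $w_t$ up to its $l$-th significant index equals $\overline{x_1}\cdots\overline{x_{i_l}}$ in $\ig(B)$; taking $t=k$ then yields the lemma. This is proved by induction on $t$, the base case $t=0$ being trivial. For the inductive step, orient the step $w_t\sim w_{t+1}$ as a split and apply Lemma~\ref{useful} (choosing the roles of $x$ and $y$ according to which word is the finer one): the $l$-th prefix of one word equals that of the other multiplied on the right by $\overline{u}$, where $u$ falls into the three listed cases. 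Writing $e:=x_{i_l}\in B_{\alpha_{i_l}}$ for the last letter of the shorter prefix, I dispose of each case using local largeness. If $u=\varepsilon$ there is nothing to do. If $u\in B_\delta$ with $\delta>\alpha_{i_l}$, then $eu=e$ is a basic product, so $\overline{e}\,\overline{u}=\overline{eu}=\overline{e}$. If $u\in B_{\alpha_{i_l}}$ with $vu=u$ and $uv=e$ for some $v\in B_\theta$, $\theta>\alpha_{i_l}$, then local largeness forces $uv=u$, whence $u=e$ and $\overline{e}\,\overline{u}=\overline{e}\,\overline{e}=\overline{e}$. In every case the trailing $\overline{u}$ is absorbed, so the two $l$-th prefixes agree in $\ig(B)$, completing the induction.

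I expect the genuine obstacle to be conceptual rather than computational: recognising that the tail $u$ of Lemma~\ref{useful}, which in a general band can carry nontrivial information (and is precisely what destroys uniqueness of normal forms and abundancy there), is always swallowed by the preceding letter when $B$ is locally large. Once this absorption is isolated, the inductive bookkeeping along the chain $w_0\sim\cdots\sim w_k$ is routine. A minor point to handle carefully is the orientation of each elementary step so that Lemma~\ref{useful} applies; since $\sim$ is symmetric and the conclusion gives prefix-equality in either direction, this causes no difficulty.
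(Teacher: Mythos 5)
Your proposal is correct and follows essentially the same route as the paper: reduce to a single elementary step $\sim$, apply Lemma~\ref{useful} to express one $l$-th prefix as the other times a trailing $\overline{u}$, and then use local largeness to show $\overline{x_{i_l}}\,\overline{u}=\overline{x_{i_l}}$ in every case. Your case-by-case absorption argument (including noting that $uv=x_{i_l}$ together with $uv=u$ forces $u=x_{i_l}$) is exactly the content the paper compresses into ``by the comment preceding the lemma,'' so the two proofs coincide in substance.
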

\begin{proof}
Suppose that $x_i\in B_{\alpha_i}$ for all $i\in [1,r].$ It is enough to consider a single step, so suppose that $$\overline{x_1}~\cdots~\overline{x_n}\sim\overline{y_1}~\cdots~\overline{y_m}.$$ By Lemma \ref{useful}, for any $l\in [1,r]$, we have
$$\overline{y_1}~\cdots~\overline{y_{j_l}}=\overline{x_1}~\cdots~\overline{x_{i_l}}~\overline{u}$$ and $y_{j_l}=u'x_{i_l}u,$ where $u'=\varepsilon$ or $u'\in B_\sigma$ with $\sigma\geq \alpha_{i_l},$ and either $u=\varepsilon$,  or $u\in B_\delta$ for some $\delta> \alpha_{i_l}$, or $u\in B_{\alpha_{i_l}}$ and there exists $v\in B_{\theta}$ with $\theta>\alpha_{i_l}$, $vu=u$ and $uv=x_{i_l}$. By the comment proceeding Lemma \ref{locally large-1} we see that in each case, $\overline{x_{i_l}}~\overline{u}=\overline{x_{i_l}},$ so that clearly, $\overline{y_1}~\cdots~\overline{y_{j_l}}=\overline{x_1}~\cdots~\overline{x_{i_l}}.$
\end{proof}

\begin{lem}\label{segment-equal}
Let $B$ be a locally large band, let $\overline{x_1}~\cdots~\overline{x_n}\in \ig(B)$ be in almost normal form with $Y$-length $r$, left to right significant indices $i_1, \cdots, i_r=n$ and ordered $Y$-components $\alpha_1, \cdots, \alpha_r,$ and let $\overline{y_1}~\cdots~\overline{y_m}\in \ig(B)$ be in almost normal form with $Y$-length $s$, left to right significant indices $j_1, \cdots, j_s=m$ and ordered $Y$-components $\beta_1, \cdots, \beta_s.$ Then \[\overline{x_1}~\cdots~\overline{x_n}=\overline{y_1}~\cdots~\overline{y_m}\] in $\ig(B)$ if and only if $r=s,$ $\alpha_l=\beta_l$ and $\overline{x_{i_{l-1}+1}}~\cdots~\overline{x_{i_{l}}}=\overline{y_{j_{l-1}+1}}~\cdots~\overline{x_{j_{l}}}$ in $\ig(B),$ for each $l\in [1,r],$  where $i_0=j_0=0$.
 \end{lem}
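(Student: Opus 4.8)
The plan is to prove the two directions separately, with the reverse (sufficiency) direction being essentially a formality and the forward (necessity) direction carrying all the weight. For the reverse direction, if $r=s$, $\alpha_l=\beta_l$ and each segment satisfies $\overline{x_{i_{l-1}+1}}~\cdots~\overline{x_{i_l}}=\overline{y_{j_{l-1}+1}}~\cdots~\overline{y_{j_l}}$ in $\ig(B)$, then concatenating these equalities immediately yields $\overline{x_1}~\cdots~\overline{x_n}=\overline{y_1}~\cdots~\overline{y_m}$, since both words are the products of their respective segments. So the content lies entirely in the forward direction.

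For the forward direction, first I would invoke Lemma \ref{indices are equal} to deduce from $\overline{x_1}~\cdots~\overline{x_n}=\overline{y_1}~\cdots~\overline{y_m}$ that $r=s$ and $\alpha_l=\beta_l$ for all $l\in[1,r]$; this gives the $Y$-length and ordered $Y$-components agreement for free. The real task is then to show the \emph{segment-wise} equalities $\overline{x_{i_{l-1}+1}}~\cdots~\overline{x_{i_l}}=\overline{y_{j_{l-1}+1}}~\cdots~\overline{y_{j_l}}$ in $\ig(B)$. Here I would lean on Lemma \ref{locally large-1}, the key feature distinguishing the locally large case, which tells me that the prefixes up to corresponding significant indices are already equal: $\overline{x_1}~\cdots~\overline{x_{i_l}}=\overline{y_1}~\cdots~\overline{y_{j_l}}$ for every $l\in[1,r]$. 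Taking this at both level $l$ and level $l-1$, the idea is to "cancel off" the common prefix $\overline{x_1}~\cdots~\overline{x_{i_{l-1}}}=\overline{y_1}~\cdots~\overline{y_{j_{l-1}}}$ and isolate the $l$-th segment.

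The cancellation is where I expect the main obstacle to lie, since $\ig(B)$ is not cancellative in general. The strategy I would use is to multiply the prefix equality $\overline{x_1}~\cdots~\overline{x_{i_l}}=\overline{y_1}~\cdots~\overline{y_{j_l}}$ on the left by a suitable idempotent drawn from the $l$-th segment's $\dee$-class $B_{\alpha_l}$. Because $B$ is locally large, an element of $B_{\alpha_l}$ multiplied against letters of the lower prefix segments (which lie in strictly lower $\dee$-classes) acts in the controlled manner $uv=vu=u$ guaranteed by local largeness, so the prefix $\overline{x_1}~\cdots~\overline{x_{i_{l-1}}}$ collapses. Concretely, pre-multiplying by $\overline{x_{i_{l-1}+1}}$ and using the $\mathcal{R}$-relation established in Corollary \ref{important} together with the local largeness identities should reduce the left-hand side to $\overline{x_{i_{l-1}+1}}~\cdots~\overline{x_{i_l}}$ and the right-hand side to $\overline{y_{j_{l-1}+1}}~\cdots~\overline{y_{j_l}}$, yielding the desired segment equality. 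The delicate point will be verifying that the multiplier interacts correctly with \emph{both} sides simultaneously — that the same idempotent cleanly strips the prefix from the $\overline{y}$-word as from the $\overline{x}$-word — which is exactly where the locally large hypothesis, rather than mere normality, is indispensable.
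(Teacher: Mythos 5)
Your sufficiency argument and the first two steps of necessity --- Lemma \ref{indices are equal} giving $r=s$ and $\alpha_l=\beta_l$, then Lemma \ref{locally large-1} giving the prefix equalities $\overline{x_1}\cdots\overline{x_{i_l}}=\overline{y_1}\cdots\overline{y_{j_l}}$ --- are exactly the paper's. The gap is your cancellation step. Its premise is false: the letters of the prefix $\overline{x_1}\cdots\overline{x_{i_{l-1}}}$ do \emph{not} lie in $\mathcal{D}$-classes strictly below (or above) $B_{\alpha_l}$; in an almost normal form consecutive ordered $Y$-components are incomparable, so in particular $\alpha_{l-1}$ and $\alpha_l$ are incomparable, and the locally large identities $uv=vu=v$ (which apply only to comparable pairs, and make the \emph{lower} element absorb the \emph{higher}) say nothing about such products --- they are not basic and admit no relations of $\ig(B)$. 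Moreover the mechanism itself cannot work: for pre-multiplication by $\overline{x_{i_{l-1}+1}}$ to strip the prefix from the left-hand side you would need the sandwich identity $\overline{x_{i_{l-1}+1}}~\overline{x_1}\cdots\overline{x_{i_{l-1}}}~\overline{x_{i_{l-1}+1}}=\overline{x_{i_{l-1}+1}}$ in $\ig(B)$. This fails already in $B$: applying the natural morphism $\ig(B)\longrightarrow B$, the left-hand side lands in the $\mathcal{D}$-class indexed by $\alpha_l\alpha_1\cdots\alpha_{l-1}\leq\alpha_l\alpha_{l-1}<\alpha_l$ (the last inequality is strict precisely because $\alpha_{l-1}$ and $\alpha_l$ are incomparable), so it cannot equal $x_{i_{l-1}+1}\in B_{\alpha_l}$. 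Sandwich collapses of this kind are exactly what fail in free idempotent generated semigroups --- compare Example \ref{ex:semilattice}, where $\overline{e}~\overline{f}~\overline{e}\neq\overline{e}$.

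The repair is immediate and is what the paper does: no multiplication or cancellation is needed at all. Apply the \emph{dual} of Lemma \ref{locally large-1}, i.e.\ the version for right to left significant indices, to the prefix equality $\overline{x_1}\cdots\overline{x_{i_l}}=\overline{y_1}\cdots\overline{y_{j_l}}$. Both sides are almost normal forms with ordered $Y$-components $\alpha_1,\dots,\alpha_l$, and their last right to left significant indices are $i_{l-1}+1$ and $j_{l-1}+1$ respectively, so the dual lemma yields exactly $\overline{x_{i_{l-1}+1}}\cdots\overline{x_{i_l}}=\overline{y_{j_{l-1}+1}}\cdots\overline{y_{j_l}}$.
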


\begin{proof}
The sufficiency is obvious. Suppose now that $\overline{x_1}~\cdots~\overline{x_n}=\overline{y_1}~\cdots~\overline{y_m}$ in $\ig(B)$. Then  it follows from Lemma \ref{indices are equal} that $r=s$ and $\alpha_i=\beta_i$ for all $i\in [1,r].$  From Lemma \ref{locally large-1}, we have that $\overline{x_1}~\cdots~\overline{x_{i_l}}=\overline{y_1}~\cdots~\overline{y_{j_l}}$ in $\ig(B),$ for all $l\in [1,r].$ Then by the dual of Lemma \ref{locally large-1}, $\overline{x_{i_{l-1}+1}}~\cdots~\overline{x_{i_{l}}}=\overline{y_{j_{l-1}+1}}~\cdots~\overline{x_{j_{l}}}$ in $\ig(B).$
 \end{proof}

\begin{lem}\label{basic lem}
Let $B$ be a locally large band  and $w=\overline{x_1}~\cdots~\overline{x_n}\in \overline{B}^+$ with $x_i\in B_{\alpha_i}$ for each $i\in [1,n].$ Suppose that there exists an $\alpha\in Y$ such that for all $i\in [1,n]$, $\alpha_i\geq \alpha$ and there is at least one $j\in [1,n]$ such that $\alpha=\alpha_j.$ Suppose also that $p\in\overline{B}^+$ and $w\,\sim\, p$.  Then  $w'=p'$ in $\ig(B_\alpha),$ where $w'$ and $p'$ are words obtained by deleting all letters in $w$ and $p$ which do not lie in $B_\alpha.$
 \end{lem}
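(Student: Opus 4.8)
The plan is to reduce the statement to a single reduction step and then track what happens to the $B_\alpha$-letters under such a step. Since $w \,\sim\, p$ means $p$ is obtained from $w$ by one application of $\longrightarrow$ or its reverse (as in Lemma~\ref{reduction systems}), it suffices to analyse the effect of splitting or merging a single basic product. Suppose the step acts at position $k$, so that either $x_k = uv$ is replaced by $\overline{u}\,\overline{v}$ (a forward step), or a consecutive pair $\overline{u}\,\overline{v}$ forming a basic product is merged into $\overline{uv}$ (a reverse step). Writing $u \in B_\mu$ and $v \in B_\tau$, the governing constraint is that every letter of $w$ lies in some $B_{\alpha_i}$ with $\alpha_i \geq \alpha$, and hence $\mu, \tau \geq \mu\tau = \alpha_k \geq \alpha$.

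First I would split into cases according to how $\mu, \tau$ compare with $\alpha$. The key dichotomy is whether the letters involved in the basic product actually lie in $B_\alpha$ or strictly above it. If $\alpha_k > \alpha$, then since $\mu, \tau \geq \alpha_k > \alpha$, neither $u$ nor $v$ lies in $B_\alpha$, so none of the letters touched by the reduction step contribute to $w'$ or $p'$; thus $w' = p'$ trivially (the deleted-letter words are literally identical). If $\alpha_k = \alpha$, then $\mu\tau = \alpha$ with $\mu, \tau \geq \alpha$, and here I would use local largeness: the crucial observation is that in a locally large band, if $\mu > \alpha$ then $u \in B_\mu$ multiplied against $B_\alpha$-elements behaves like an identity from the appropriate side (since $uv = v$ or $vu = v$ for $v \in B_\alpha$, $u \in B_\mu$ with $\mu > \alpha$), so deleting such a $u$ does not change the product computed in $B_\alpha$. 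The subtle subcase is $\mu = \tau = \alpha$, where both $u$ and $v$ lie in $B_\alpha$ and $x_k = uv$ is itself a product of two $B_\alpha$-elements; here the forward step replaces the single $B_\alpha$-letter $\overline{x_k}$ by the two $B_\alpha$-letters $\overline{u}\,\overline{v}$, and I must check that $x_k = uv$ as an equation in $\ig(B_\alpha)$ — which holds precisely because $(u,v)$ is a basic pair inside the rectangular band $B_\alpha$, so $\overline{x_k} = \overline{uv} = \overline{u}\,\overline{v}$ is a defining relation of $\ig(B_\alpha)$.

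Assembling these cases, in every situation the words $w'$ and $p'$ either coincide letter-for-letter or differ by applying one defining relation of $\ig(B_\alpha)$, or by inserting/deleting a letter from a strictly higher $\mathcal{D}$-class that acts as a one-sided identity on $B_\alpha$. In the latter case I would invoke the local-largeness identities $uv = v$, $vu = v$ (for $u$ above $\alpha$, $v$ in $B_\alpha$) to argue that such insertions preserve the evaluated product in $\ig(B_\alpha)$; more carefully, I would show the deleted $u$ can be absorbed into an adjacent $B_\alpha$-letter via a basic product, so that $w'$ and $p'$ represent the same element of $\ig(B_\alpha)$. Finally, passing from a single step to the general case $w \overset{*}{\longleftrightarrow} p$ follows by transitivity, since $\sim$-equivalence generates the congruence defining $\ig(B)$.

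I expect the main obstacle to be the careful bookkeeping in the case $\mu \neq \tau$ with exactly one of $\mu, \tau$ equal to $\alpha$: here precisely one of $u, v$ survives into $w'$ while the other is a higher letter that disappears, and I must verify that the surviving $B_\alpha$-letter in $p'$ equals the corresponding $B_\alpha$-contribution of $x_k$ in $w'$. This requires using that $x_k = uv$ reduces, via local largeness, to the single $B_\alpha$-factor, together with the fact that $(u,v)$ being a basic product forces the right multiplicative relationship ($uv = u$ or $uv = v$ etc.) so that the deletion is consistent with equality in $\ig(B_\alpha)$. The rest of the argument is a routine but attentive case analysis.
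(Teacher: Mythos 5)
Your proposal is correct and follows essentially the same route as the paper's proof: reduce to a single step $x_k=uv$ with $u\in B_\mu$, $v\in B_\tau$, then split into the cases $\alpha_k>\alpha$ (both factors vanish from $w'$ and $p'$, so the words are identical), $\alpha_k=\alpha$ with $\mu=\tau=\alpha$ (the split is a defining relation of $\ig(B_\alpha)$, since basicness of $(u,v)$ in $B$ restricts to $B_\alpha$), and $\alpha_k=\alpha$ with exactly one of $\mu,\tau$ strictly above $\alpha$ (local largeness gives $x_k=uv=v$, so $w'$ and $p'$ again coincide letter-for-letter). The only superfluous element is your suggestion that the higher letter must be ``absorbed'' into an adjacent $B_\alpha$-letter inside $\ig(B_\alpha)$: such letters are simply deleted in forming $w'$ and $p'$, so once one knows the surviving factor equals $x_k$, no further argument is needed.
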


\begin{proof}
Suppose that we split $x_k=uv$ for some $k\in [1,n],$ where $u\in B_{\nu}$ and $v\in B_{\tau}.$ Then we have $$w=\overline{x_1}~\cdots~\overline{x_{k-1}}~\overline{x_k}~\overline{x_{k+1}}~\cdots~\overline{x_n}\sim \overline{x_1}~\cdots~\overline{x_{k-1}}~\overline{u}~\overline{v}~\overline{x_{k+1}}~\cdots~\overline{x_n}=p.$$
 If $\alpha_k>\alpha,$ then $\nu, \tau>\alpha.$ Hence $w'=p'$ in $\overline{B_\alpha}^+;$ of course, they are also equal in $\ig(B_\alpha).$

If $\alpha_k=\alpha$ and $\mu=\tau=\alpha,$ then $u~\mathcal{L}~v$ or $u~\mathcal{R}~v,$ so that $uv$ is basic in $B_\alpha$. In this case, $\overline{x_k}=\overline{uv}=\overline{u}~\overline{v}$ in $\ig(B_\alpha),$ so that certainly, $$p'=(\overline{x_1}~\cdots~\overline{x_{k-1}})'~\overline{u}~\overline{v}~(\overline{x_{k+1}}~\cdots~\overline{x_n})'=
(\overline{x_1}~\cdots~\overline{x_{k-1}})'~\overline{x_k}~(\overline{x_{k+1}}~\cdots~\overline{x_n})'=w'$$ in $\ig(B_\alpha).$

If $\alpha_k=\alpha$ and $\nu>\tau=\alpha,$ then we have $x_k=uv=v$ as $B$ is a locally large band, so that $$\begin{aligned}
p'&=(\overline{x_1}~\cdots~\overline{x_{k-1}})'~(\overline{u}~\overline{v})'~(\overline{x_{k+1}}~\cdots~\overline{x_n})'\\
&=
(\overline{x_1}~\cdots~\overline{x_{k-1}})'~\overline{v}~(\overline{x_{k+1}}~\cdots~\overline{x_n})'\\
&=
(\overline{x_1}~\cdots~\overline{x_{k-1}})'~\overline{x_k}~(\overline{x_{k+1}}~\cdots~\overline{x_n})'\\
&=
w'
\end{aligned}$$ in $\overline{B_\alpha}^+$ (where either or both of the left and right factors can be empty), so that certainly $p'=w'$ in $\ig(B_\alpha).$

A similar argument holds if $\alpha_k=\alpha$ and $\alpha=\nu<\tau.$
\end{proof}
\begin{lem}\label{word problem lem}
Let $B$ be a locally large band and let $x_1, \cdots, x_n, y_1, \cdots, y_m\in B_\alpha$ for some $\alpha\in Y.$ Then with $w=\overline{x_1}~\cdots~\overline{x_n}$ and $p=\overline{y_1}~\cdots~\overline{y_m}$ we have $w=p$ in $\ig(B_\alpha)$ if and only if $w=p$ in $\ig(B).$
 \end{lem}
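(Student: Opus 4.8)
The plan is to prove the two implications separately; the forward direction is routine, while the converse carries all the weight.

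For the direction ``$w=p$ in $\ig(B_\alpha)$ implies $w=p$ in $\ig(B)$'', I would invoke the morphism $\overline{\psi}\colon \ig(B_\alpha)\longrightarrow \ig(B)$ with $\overline{e}\,\overline{\psi}=\overline{e}$ for $e\in B_\alpha$, obtained by lifting the inclusion $B_\alpha\hookrightarrow B$ via Lemma~\ref{homo} exactly as in the proof of Corollary~\ref{regular of IG(B)}. Since all of $x_1,\dots,x_n,y_1,\dots,y_m$ lie in $B_\alpha$, the words $w$ and $p$ name elements of both $\ig(B_\alpha)$ and $\ig(B)$, and $\overline{\psi}$ carries the former to the latter; applying $\overline{\psi}$ to the equation $w=p$ in $\ig(B_\alpha)$ immediately yields $w=p$ in $\ig(B)$.

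For the converse, suppose $w=p$ in $\ig(B)$. Since $\overset{*}{\longleftrightarrow}$ is the congruence generated by $R$, there is a finite chain $w=w_0\sim w_1\sim\cdots\sim w_k=p$ in $\overline{B}^+$, each step being one application of $\longrightarrow$ or its reverse. I would like to apply Lemma~\ref{basic lem} to each step, but that lemma requires each $w_i$ to have all its letters in classes $B_\gamma$ with $\gamma\geq\alpha$ and at least one letter actually in $B_\alpha$. The heart of the argument is to establish this invariant for every word in the chain. That all $Y$-components stay $\geq\alpha$ is easy: a split replaces a letter of $B_{\gamma_k}$ by $\overline{u}\,\overline{v}$ with components $\mu,\tau$ satisfying $\mu\tau=\gamma_k$, whence $\mu,\tau\geq\gamma_k\geq\alpha$, while a merge replaces two letters of components $\gamma,\delta\geq\alpha$ by one of component $\gamma\delta=\gamma\wedge\delta\geq\alpha$. (Alternatively one sees this at a stroke by applying $\overline{\theta}\colon\ig(B)\to\ig(Y)$ of Corollary~\ref{direct}: every $w_i$ has $\ig(Y)$-image $\overline{\alpha}$, so the product of its components equals $\alpha$, forcing each component $\geq\alpha$ by Lemma~\ref{uniqueness} and the semilattice structure.)

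The surviving-$B_\alpha$-letter part is the subtle point, and here local largeness does the work. Splitting a letter of $B_\alpha$ produces, by the identities $uv=vu=u$ for $u$ below $v$, either two letters of $B_\alpha$ (when $\mu=\tau=\alpha$) or one letter of $B_\alpha$ together with a strictly higher one; the remaining case $\mu,\tau>\alpha$ cannot arise, as the product would then not be basic. Merging two letters both strictly above $\alpha$ forces their components to be comparable (basicness), so the merged letter again lies strictly above $\alpha$ and hence cannot consume the $B_\alpha$ letter present elsewhere; merging a letter of $B_\alpha$ with a strictly higher one again yields a letter of $B_\alpha$ by local largeness; and merging two letters of $B_\alpha$, possible only when at least two are present, leaves at least one behind. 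Thus every $w_i$ satisfies the hypotheses of Lemma~\ref{basic lem}.

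With the invariant in hand, applying Lemma~\ref{basic lem} to each step $w_i\sim w_{i+1}$ gives $w_i'=w_{i+1}'$ in $\ig(B_\alpha)$, where a prime denotes deletion of all letters outside $B_\alpha$; chaining these equalities yields $w_0'=w_k'$ in $\ig(B_\alpha)$. Since every letter of $w_0=w$ and of $w_k=p$ already lies in $B_\alpha$, deletion changes nothing, so $w_0'=w$ and $w_k'=p$, giving $w=p$ in $\ig(B_\alpha)$ as required. The main obstacle is precisely the bookkeeping for the surviving-$B_\alpha$-letter invariant, without which Lemma~\ref{basic lem} could not be invoked at every step; the local largeness identities are exactly what make the deleted letters disappear cleanly while keeping a $B_\alpha$ letter alive throughout the chain.
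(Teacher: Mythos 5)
Your proof is correct and follows essentially the same route as the paper: the forward direction because basic pairs of $B_\alpha$ remain basic in $B$ (equivalently, via the inclusion-induced morphism of Lemma~\ref{homo}), and the converse by taking a chain of elementary rewrites, deleting all letters outside $B_\alpha$, and applying Lemma~\ref{basic lem} at each step. The only difference is that you explicitly verify the invariant needed to invoke Lemma~\ref{basic lem} throughout the chain (every letter lies in a component $\geq\alpha$ and at least one letter stays in $B_\alpha$), a bookkeeping point the paper's proof leaves implicit.
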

\begin{proof}
The sufficiency is clear, as any basic pair in $B_\alpha$ is basic in $B.$ Conversely, if $w=p$ in $\ig(B)$, there exists a finite sequence $$w=w_0\sim w_1\sim w_2\cdots\sim w_{s-1}\sim w_s=p.$$ Let $w_0',w_1', \cdots,  w_s'$ be the words obtained from $w_0, w_1,  \cdots, w_s$ by deleting letters $x$ within the word such that $x\in B_\beta$ with $\beta\neq\alpha.$ From Lemma \ref{basic lem}, we have that $w_0'=w_1'=w_2'=\cdots=w_{s-1}'=w_s'$ in $\ig(B_\alpha).$ Note that $w_0'=w_0=w\in \overline{B_\alpha}^+$ and $w_s'=w_s=p\in \overline{B_\alpha}^+,$ so that $w=p$ in $\ig(B_\alpha).$
 \end{proof}

\begin{them}
Let $B$ be a locally large band. Then the word problem of $\ig(B)$ is solvable.
\end{them}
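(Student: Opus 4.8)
The plan is to reduce the word problem for $\ig(B)$ over an arbitrary locally large band $B$ to finitely many instances of the word problem over individual rectangular-band components $B_\alpha$, each of which is already known to be solvable because $\ig(B_\alpha)$ has unique normal forms by Lemma~\ref{rect-unique}. The key structural tool is Lemma~\ref{segment-equal}, which shows that two almost normal forms are equal in $\ig(B)$ precisely when they share the same $Y$-length $r$, the same ordered $Y$-components $\alpha_1,\dots,\alpha_r$, and, crucially, the corresponding single-$Y$-component segments $\overline{x_{i_{l-1}+1}}\cdots\overline{x_{i_l}}$ and $\overline{y_{j_{l-1}+1}}\cdots\overline{y_{j_l}}$ are equal in $\ig(B)$ for each $l\in[1,r]$. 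Since each such segment lies entirely inside a single $B_{\alpha_l}$, Lemma~\ref{word problem lem} then lets me test that segment equality inside $\ig(B_{\alpha_l})$ instead of in $\ig(B)$.

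\textbf{The algorithm.} Given two input words $w,p\in\overline{B}^+$, first I would compute almost normal forms for each: this is effective, since the $Y$-components of the letters are readable from $B$, and the significant indices together with the rewriting of Lemma~\ref{almost normal form} (or Corollary~\ref{almost normal form-simple}) are produced by a finite, terminating procedure as the reduction system $(\overline{B}^+,\longrightarrow)$ is noetherian by Lemma~\ref{reduction systems}. Having obtained almost normal forms, I would apply Lemma~\ref{indices are equal} to extract the $Y$-length and ordered $Y$-components of each; if these data disagree, then $w\neq p$ and the algorithm halts with answer ``unequal''. If they agree, say both have $Y$-length $r$ and ordered components $\alpha_1,\dots,\alpha_r$, I would isolate the $r$ pairs of segments and, for each $l$, decide the equality $\overline{x_{i_{l-1}+1}}\cdots\overline{x_{i_l}}=\overline{y_{j_{l-1}+1}}\cdots\overline{y_{j_l}}$ inside $\ig(B_{\alpha_l})$ using the unique-normal-form solution of Lemma~\ref{rect-unique}. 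By Lemma~\ref{word problem lem} this is equivalent to equality in $\ig(B)$, and by Lemma~\ref{segment-equal} the conjunction over all $l$ is equivalent to $w=p$ in $\ig(B)$.

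\textbf{Correctness and termination.} Correctness is exactly the biconditional of Lemma~\ref{segment-equal} combined with Lemma~\ref{word problem lem}: $w=p$ in $\ig(B)$ if and only if the $Y$-data match and each of the finitely many intra-component segment equalities holds, and each of those is decidable in $\ig(B_{\alpha_l})$ by comparing unique normal forms. Termination is guaranteed because computing almost normal forms terminates (the reduction system is noetherian), there are only $r\le\max(n,m)$ segments to check, and each segment test terminates by reducing both segments to their unique normal forms within the appropriate rectangular band and comparing them letter by letter in $\overline{B_{\alpha_l}}^+$.

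\textbf{The main obstacle} I anticipate is not logical but one of bookkeeping: ensuring that the passage to almost normal form is genuinely algorithmic and that the segment decomposition is unambiguous enough to be compared. The subtlety is that almost normal forms are \emph{not} unique (as noted after the definition), so I must take care that the \emph{invariants} I compare---the $Y$-length, the ordered $Y$-components, and the in-component segment classes---are well defined independently of which almost normal form the rewriting procedure happens to produce. This is precisely what Lemmas~\ref{indices are equal} and~\ref{segment-equal} secure, so the proof amounts to assembling these lemmas into an explicit decision procedure; no new combinatorial difficulty arises beyond verifying that each referenced step is effective.
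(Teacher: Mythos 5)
Your proposal is correct and follows essentially the same route as the paper, whose proof simply cites Lemmas~\ref{rect-unique}, \ref{segment-equal} and \ref{word problem lem}: reduce equality in $\ig(B)$ to matching $Y$-data plus segment-wise equality, and decide each segment inside the relevant $\ig(B_{\alpha_l})$ via unique normal forms. You have merely made explicit the algorithmic assembly that the paper leaves implicit, which is a faithful elaboration rather than a different argument.
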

\begin{proof}
The result is immediate from Lemmas \ref{rect-unique}, \ref{segment-equal} and \ref{word problem lem}.
 \end{proof}

\section{Free idempotent generated semigroups with condition (P)}\label{sec:(P)}

We have shown that for any band $B$, the semigroup $\ig(B)$ is always weakly abundant with the congruence condition, but not necessarily abundant. This section is devoted to finding some special kinds of bands $B$ for which $\ig(B)$ is abundant. As a means to this end we introduce a technical condition. 

\begin{definition}
We say that the semigroup $\ig(B)$ satisfies {\it Condition} $(P)$ if for any two almost normal forms $\overline{u_{1}}~\cdots~\overline{u_n}=
\overline{v_{1}}~\cdots~\overline{v_m}\in \ig(B)$ with $Y$-length $r$, left to right significant indices $i_1, \cdots, i_r=n$ and $l_1, \cdots, l_r=m,$ respectively,  the following statements (with $i_0=l_0=0$) hold:

(i) $u_{i_s}~\mathcal{L}~v_{l_s}$ implies $\overline{u_1}~\cdots \overline{u_{i_s}}=\overline{v_1}~\cdots~\overline{v_{l_s}}$, for all $s\in [1,r].$

(ii) $u_{i_t+1}~\mathcal{R}~v_{l_t+1}$ implies $\overline{u_{i_t+1}}~\cdots \overline{u_{n}}=\overline{v_{l_t+1}}~\cdots~\overline{v_{m}}$, for all $t\in [0, r-1].$
\end{definition}

It follows from Lemma~\ref{locally large-1} that Condition (P) holds for $\ig(B)$, where $B$ is a locally large band. On the other hand, it is a consequence of our results and  Example~\ref{ex:final} that not every band has Condition (P), in particular, not every normal band has Condition (P).

\begin{pro}\label{simple normal band}
Let $B$ be a band for which $\ig(B)$ satisfies Condition $(P)$. In addition, suppose that $B$ is normal $($so that $B=\mathcal{B}(Y; B_\alpha, \phi_{\alpha,\beta})$$)$ or locally large. Then $\ig(B)$ is an abundant semigroup.
\end{pro}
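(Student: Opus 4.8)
The plan is to show that $\ig(B)$ is abundant by demonstrating that every $\ars$-class and every $\els$-class contains an idempotent. By Theorem \ref{IG(band)} we already know that $\ig(B)$ is weakly abundant, so every \art-class and every \elt-class already contains an idempotent; concretely, for $w=\overline{x_1}~\cdots~\overline{x_n}$ in almost normal form we have $w~\widetilde{\mathcal{R}}~\overline{x_1}$ and $w~\widetilde{\mathcal{L}}~\overline{x_n}$. The whole task therefore reduces to upgrading these from \art\ and \elt\ to \ars\ and \els. By the left-right dual, it suffices to prove that $w~\ars~\overline{x_1}$, i.e. that $\overline{x_1}$ is an $\ars$-related idempotent in the $\widetilde{\mathcal{R}}$-class of $w$.

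First I would invoke Lemma \ref{observation2}: since $w~\widetilde{\mathcal{R}}~\overline{x_1}$ and $\overline{x_1}\in E(\ig(B))$, to conclude $w~\ars~\overline{x_1}$ it is enough to show that for all $p,q\in \ig(B)$, if $pw=qw$ then $p\overline{x_1}=q\overline{x_1}$. So I would take arbitrary elements $p=\overline{z_1}~\cdots~\overline{z_s}$ and $q=\overline{z_1'}~\cdots~\overline{z_t'}$ (in almost normal form) with $pw=qw$, and analyse the two products $pw$ and $qw$ using Lemma \ref{product}, which describes the almost normal form of a product in terms of the $Y$-components at the junction. The equality $pw=qw$ forces, via Lemma \ref{indices are equal}, that the two sides have the same $Y$-length and the same ordered $Y$-components; tracing through the three cases of Lemma \ref{product} (junction components incomparable, $\geq$, or $\leq$) I would identify which initial segment of the product is governed by $\overline{x_1}$ and which letters come from $p$ (respectively $q$).

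The crucial leverage is Condition $(P)$, which is precisely designed for this moment. The products $pw$ and $qw$, once written in almost normal form, are two equal almost normal forms to which part (i) or part (ii) of Condition $(P)$ applies. The point is that $\overline{x_1}$ sits at a \emph{significant} position determined by the junction, and the hypothesis $pw=qw$ together with $\els$- or $\ars$-compatibility of the relevant letters lets Condition $(P)$ conclude that the corresponding segments of $pw$ and $qw$ agree; truncating at that segment yields exactly $p\overline{x_1}=q\overline{x_1}$ (after simplifying the tail contributed by $w$, which collapses because $\overline{x_1}$ is idempotent and $\mathcal{R}$-related to the relevant prefix via Corollary \ref{important}). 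For the normal and locally large hypotheses, I would use Corollary \ref{almost normal form-simple} (respectively the locally large reductions preceding Lemma \ref{locally large-1}) to control precisely how the letters of $p$ and $q$ are rewritten when they absorb $\overline{x_1}$, ensuring the segment identity from $(P)$ really does descend to $p\overline{x_1}=q\overline{x_1}$ rather than to some longer word.

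The main obstacle, I expect, is the bookkeeping in the case where the junction components satisfy $\gamma_t \geq \alpha_1$ (case (iii) of Lemma \ref{product}), where $\overline{x_1}$ interacts nontrivially with the trailing letters $\overline{z_s}$ of $p$ and gets conjugated into words like $\overline{x_1 z_s x_1}$. Here the almost normal forms of $pw$ and $qw$ no longer have $\overline{x_1}$ appearing cleanly as a single letter, and I must verify that the segment equality furnished by Condition $(P)$ still implies $p\overline{x_1}=q\overline{x_1}$. The normality assumption simplifies this substantially, since $\phi_{\alpha_1,\gamma}$ makes the conjugated letters depend only on the image of $x_1$ under a fixed morphism; the locally large case is even cleaner because the conjugation collapses by the local largeness identity $uv=vu=u$. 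Reconciling both hypotheses under one argument, while keeping the index arithmetic from Lemma \ref{product} aligned with the significant-index structure demanded by $(P)$, is where the real care is needed.
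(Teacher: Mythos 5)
Your proposal follows essentially the same route as the paper's proof: reduce via Lemma \ref{observation2} to showing that $pw=qw$ implies $p\,\overline{x_1}=q\,\overline{x_1}$, compute almost normal forms of the two products with Lemma \ref{product}, match up $Y$-lengths and ordered $Y$-components via the projection to $\ig(Y)$, invoke Condition (P) to obtain equality of the relevant initial segments, and handle the normal and locally large hypotheses by separate arguments in the one case where conjugated letters of the form $\overline{x_1z_{j_t}x_1}$ appear. The only slip is in labelling that hard case: it occurs when $\gamma_t\leq\alpha_1$ (and $\beta_s\leq\alpha_1$), not $\gamma_t\geq\alpha_1$ --- your cited case (iii) of Lemma \ref{product} and the conjugation pattern $\overline{x_1z_sx_1}$ are the right ones, but the inequality you wrote is inverted.
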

\begin{proof}
Let $\overline{x_1}~\cdots~\overline{x_{n}}\in \ig(B)$ be in almost normal form with $Y$-length $r$, left to right significant indices $i_1, \cdots, i_r=n$, and ordered $Y$-components $\alpha_1, \cdots, \alpha_r.$ By Theorem \ref{IG(band)},  $\overline{x_1}~\cdots~\overline{x_{i_r}}~\mathcal{\widetilde{R}}~\overline{x_1}$. We aim to show that $\overline{x_1}~\cdots~\overline{x_{i_r}}~\mathcal{R}^*~\overline{x_1}.$ From Lemma \ref {observation2}, we only need to show that for any two almost normal forms $\overline{y_1}~\cdots~\overline{y_{m}}, \overline{z_1}~\cdots~\overline{z_h}\in \ig(B)$
we have
\[\overline{z_1}~\cdots~\overline{z_{h}}
~\overline{x_1}~\cdots~\overline{x_{n}}=\overline{y_1}~\cdots~\overline{y_{m}}
~\overline{x_1}~\cdots~\overline{x_{n}}\Longrightarrow \overline{z_1}~\cdots~\overline{z_{h}}
~\overline{x_1}=\overline{y_1}~\cdots~\overline{y_{m}}
~\overline{x_1}.\]

Suppose that $\overline{y_1}~\cdots~\overline{y_{m}}$ has $Y$-length $m$, left to right significant indices $l_1, \cdots, l_s=m$, and ordered $Y$-components $\beta_1, \cdots, \beta_s,$ and $\overline{z_1}~\cdots~\overline{z_h}\in \ig(B)$ has $Y$-length $t$, left to right significant indices $j_1, \cdots, j_t=h$, and ordered $Y$-components $\gamma_1, \cdots, \gamma_t$.

Assume now that $$\overline{z_1}~\cdots~\overline{z_{j_t}}
~\overline{x_1}~\cdots~\overline{x_{i_r}}=\overline{y_1}~\cdots~\overline{y_{l_s}}
~\overline{x_1}~\cdots~\overline{x_{i_r}}$$ 
(it will be convenient to use the indices $i_r,l_s,j_t$). We consider the following cases:

\medskip

(i) If $\gamma_t, \alpha_1$ and $\beta_s, \alpha_1$ are incomparable, then both sides of the above equality are in almost normal form, so that by Condition $(P)$ $$\overline{z_1}~\cdots~\overline{z_{j_t}}
~\overline{x_1}~\cdots~\overline{x_{i_1}}=\overline{y_1}~\cdots~\overline{y_{l_s}}
~\overline{x_1}~\cdots~\overline{x_{i_1}}.$$
Since $\overline{x_{1}}\cdots~\overline{ x_{i_1}}~\mathcal{R}~\overline{x_{i_1}}$ by Lemma \ref{rectangular band}, we have  $\overline{z_1}~\cdots~\overline{z_{j_t}}
~\overline{x_1}=\overline{y_1}~\cdots~\overline{y_{l_s}}
~\overline{x_1}.$

\medskip

(ii) If $\gamma_t\leq \alpha_1$ and $\beta_s, \alpha_1$ are incomparable, then by Lemma \ref{product}, $\overline{z_1}~\cdots~\overline{z_{j_t}}
~\overline{x_1}~\cdots~\overline{x_{i_r}}$ has an almost normal form  $$\overline{z_1}~\cdots~\overline{z_{j_t}}~\overline{x_1z_{j_t}x_1}~\cdots~
\overline{x_{i_v}\cdots x_{1}z_{j_t}x_1\cdots x_{i_v}}~\overline{x_{i_v+1}}~\cdots~\overline{x_{i_r}}, $$ for some $v\in [1,r]$, where $\gamma_t\leq \alpha_1, \cdots, \alpha_v$ and $v=r$ or $\gamma_t, \alpha_{v+1}$ are incomparable. Hence we have $$\overline{z_1}~\cdots~\overline{z_{j_t}}~\overline{x_1z_{j_t}x_1}~\cdots~
\overline{x_{i_v}\cdots x_{1}z_{j_t}x_1\cdots x_{i_v}}~\overline{x_{i_v+1}}~\cdots~\overline{x_{i_r}}=\overline{y_1}~\cdots~\overline{y_{l_s}}
~\overline{x_1}~\cdots~\overline{x_{i_r}}.$$ Note that both sides of the above equality are in almost normal form. It follows from Corollary \ref{direct} that $$(\overline{z_1}~\cdots~\overline{z_{j_t}}~\overline{x_1z_{j_t}x_1}~\cdots~
\overline{x_{i_v}\cdots x_{1}z_{j_t}x_1\cdots x_{i_v}}~\overline{x_{i_v+1}}~\cdots~\overline{x_{i_r}})~\overline{\theta}=(\overline{y_1}~\cdots~\overline{y_{l_s}}
~\overline{x_1}~\cdots~\overline{x_{i_r}})~\overline{\theta}$$ and so $$\overline{\gamma_1}~\cdots~\overline{\gamma_t}~\overline{\alpha_{v+1}}~\cdots~\overline{\alpha_r}
=\overline{\beta_1}~\cdots~\overline{\beta_s}~\overline{\alpha_1}~\cdots~\overline{\alpha_r}.$$ Since $v\geq 1$, we have $\gamma_t=\alpha_v$. To avoid contradiction, $v=1$,  and hence by Condition $(P)$ $$ \overline{z_1}~\cdots~\overline{z_{j_t}}~\overline{x_1z_{j_t}x_1}~\cdots~
\overline{x_{i_1}\cdots x_{1}z_{j_t}x_1\cdots x_{i_1}}=\overline{y_1}~\cdots~\overline{y_{l_s}}
~\overline{x_1}~\cdots~\overline{x_{i_1}}.$$ As $\gamma_t=\alpha_v$, $$\overline{z_1}~\cdots~\overline{z_{j_t}}~\overline{x_1}~\cdots~\overline{x_{i_1}}=\overline{y_1}~\cdots~\overline{y_{l_s}}
~\overline{x_1}~\cdots~\overline{x_{i_1}}$$ so that $\overline{z_1}~\cdots~\overline{z_{j_t}}~\overline{x_1}=\overline{y_1}~\cdots~\overline{y_{l_1}}~\cdots~\overline{y_{l_s}}
~\overline{x_1}.$

\medskip

(iii) If $\gamma_t\leq \alpha_1$ and $\beta_s\leq \alpha_1$, then by Lemma \ref{product} we have the following two almost normal forms for $\overline{z_1}~\cdots~\overline{z_{j_t}}
~\overline{x_1}~\cdots~\overline{x_{i_r}}$ and  $\overline{y_1}~\cdots~\overline{y_{l_s}}
~\overline{x_1}~\cdots~\overline{x_{i_r}},$ namely,
$$\overline{z_1}~\cdots~\overline{z_{j_t}}~\overline{x_1z_{j_t}x_1}~\cdots~\overline{x_{i_v}\cdots x_1z_{j_t}x_1\cdots x_{i_v}}~\overline{x_{i_v+1}}~\cdots~\overline{x_{i_r}}$$
where $v\in [1,r]$ such that $\gamma_t\leq \alpha_1, \cdots, \alpha_v$ and $v=r$ or $\gamma_t, \alpha_{v+1}$ are incomparable, and
$$\overline{y_1}~\cdots~\overline{y_{l_s}}~\overline{x_1y_{l_s}x_1}~\cdots~\overline{x_{i_u}\cdots x_1y_{l_s}x_1\cdots x_{i_u}}~\overline{x_{i_u+1}}~\cdots~\overline{x_{i_r}}$$
where  $u\in [1,r]$ with $\beta_s\leq \alpha_1, \cdots, \alpha_u$ and $u=r$ or $\beta_s, \alpha_{u+1}$ are incomparable.
Hence by Corollary \ref{direct}, $$\overline{\gamma_1}~\cdots~\overline{\gamma_t}~\overline{\alpha_{v+1}}~\cdots~\overline{\alpha_r}
=\overline{\beta_1}~\cdots~\overline{\beta_s}~\overline{\alpha_{u+1}}~\cdots~\overline{\alpha_r}$$
If $v>u$, then $\gamma_t=\alpha_v$, to avoid contradiction $v=1$, so $u=0$, contradiction. Similarly, $v<u$ is impossible. We deduce that $v=u$, and so $t=s$ and $\beta_s=\gamma_t$.  

If $B$ is a normal band satisfying Condition $(P),$ $$x_1z_{j_t}x_1=x_1\phi_{\alpha_1,\gamma_t}=x_1\phi_{\alpha_1,\beta_s}=x_1y_{l_s}x_1$$ $$\vdots$$ $$x_{i_v}\cdots x_1z_{j_t}x_1\cdots x_{i_v}=x_{i_v}\phi_{\alpha_v,\gamma_t}=x_{i_u}\phi_{\alpha_u,\beta_s}=x_{i_u}\cdots x_{1}y_{l_s}x_{1}\cdots x_{i_u}$$ so that by Condition $(P)$, we have $$\overline{z_1}~\cdots~\overline{z_{j_t}}~\overline{x_1z_{j_t}x_1}~\cdots~\overline{x_{i_v}\cdots x_1z_{j_t}x_1\cdots x_{i_v}}=\overline{y_1}~\cdots~\overline{y_{l_s}}~\overline{x_1y_{l_s}x_1}~\cdots~\overline{x_{i_u}\cdots x_1y_{l_s}x_1\cdots x_{i_u}}.$$
On the other hand, we have $$\overline{x_1z_{j_t}x_1}~\cdots~\overline{x_{i_v}\cdots x_1z_{j_t}x_1\cdots x_{i_v}}=\overline{x_1y_{l_s}x_1}~\cdots~\overline{x_{i_u}\cdots x_1y_{l_s}x_1\cdots x_{i_u}}$$ which by Lemma~\ref{rectangular band} is $\mathcal{R}$-related to $\overline{x_1z_{j_t}x_1}$ in $\ig(B_{\gamma_t})$ and hence in $\ig(B)$, so that $$\overline{z_1}~\cdots~\overline{z_{j_t}}~\overline{x_1z_{j_t}x_1}=\overline{y_1}~\cdots~\overline{y_{l_s}}~\overline{x_1y_{l_s}x_1},$$ and hence
$$\overline{z_1}~\cdots~\overline{z_{j_t}}~\overline{x_1}=\overline{y_1}~\cdots~\overline{y_{l_s}}~\overline{x_1}.$$

Suppose now that $B$ is a locally large band. Consider first the case where $v=u=1.$ By Lemma \ref{locally large-1} we have $$\overline{z_1}~\cdots~\overline{z_{j_t}}~\overline{x_1z_{j_t}x_1}~\cdots~
\overline{x_{i_1}\cdots x_1z_{j_t}x_1\cdots x_{i_1}}=\overline{y_1}~\cdots~\overline{y_{l_s}}~\overline{x_1y_{l_s}x_1}~\cdots~\overline{x_{i_1}\cdots x_1y_{l_s}x_1\cdots x_{i_1}}$$ and so $$\overline{z_1}~\cdots~\overline{z_{j_t}}~\overline{x_1}~\cdots~
\overline{x_{i_1}}=\overline{y_1}~\cdots~\overline{y_{l_s}}~\overline{x_1}~\cdots~\overline{x_{i_1}}$$
so that $$\overline{z_1}~\cdots~\overline{z_{j_t}}~\overline{x_1}=\overline{y_1}~\cdots~\overline{y_{l_s}}~\overline{x_1}.$$
Suppose now that $v=u>1.$ By assumption $\beta_s=\gamma_t\leq \alpha_1, \cdots, \alpha_v$. We claim that there exists no $j\in [1,v]$ such that $\gamma_t=\alpha_j;$ otherwise we will have $\alpha_j, \alpha_{j+1}$ are comparable if $v>j$ or $\alpha_v, \alpha_{v-1}$ are comparable if $v=j.$ Hence $\gamma_t=\beta_s< \alpha_1, \cdots, \alpha_v.$  Since $B$ is a locally large band, we have $$\overline{z_1}~\cdots~\overline{z_{j_t}}~\overline{x_1z_{j_t}x_1}~\cdots~
\overline{x_{i_v}\cdots x_1z_{j_t}x_1\cdots x_{i_v}}~\overline{x_{i_v+1}}~\cdots~\overline{x_{i_r}}=\overline{z_1}~\cdots~\overline{z_{j_t}}~\overline{x_{i_v+1}}~\cdots~\overline{x_{i_r}}$$
and $$\overline{y_1}~\cdots~\overline{y_{l_s}}~\overline{x_1y_{l_s}x_1}~\cdots~\overline{x_{i_u}\cdots x_1y_{l_s}x_1\cdots x_{i_u}}~\overline{x_{i_u+1}}~\cdots~\overline{x_{i_r}}=\overline{y_1}~\cdots~\overline{y_{l_s}}~\overline{x_{i_v+1}}~\cdots~\overline{x_{i_r}}$$
so that it follows from Lemma \ref{locally large-1} that $$\overline{z_1}~\cdots~\overline{z_{j_t}}=\overline{y_1}~
\cdots~\overline{y_{l_s}}$$ and so certainly $$\overline{z_1}~\cdots~\overline{z_{j_t}}~\overline{x_1}=\overline{y_1}~
\cdots~\overline{y_{l_s}}~\overline{x_1}.$$

\medskip

(iv) If $\gamma_t\leq \alpha_1$ and $\beta_s\geq \alpha_1$, then by Lemma \ref{product} we have the following two almost normal forms for $\overline{z_1}~\cdots~\overline{z_{j_t}}
~\overline{x_1}~\cdots~\overline{x_{i_r}}$ and  $\overline{y_1}~\cdots~\overline{y_{l_s}}
~\overline{x_1}~\cdots~\overline{x_{i_r}},$ namely,
$$\overline{z_1}~\cdots~\overline{z_{j_t}}~\overline{x_1z_{j_t}x_1}~\cdots~\overline{x_{i_v}\cdots x_1z_{j_t}x_1\cdots x_{i_v}}~\overline{x_{i_v+1}}~\cdots~\overline{x_{i_r}}$$
for some $v\in [1,r]$ with $\gamma_t\leq \alpha_1, \cdots, \alpha_v$ and $v=r$ or $\gamma_t, \alpha_{v+1}$ are incomparable, and
$$\overline{y_1}\cdots \overline{y_{l_u}}~\overline{y_{l_u+1}\cdots y_{l_s}x_1y_{l_s}\cdots y_{l_u+1}}~\cdots~\overline{y_{l_s}x_1y_{l_s}}~\overline{x_1}~\cdots~\overline{x_{i_r}}$$ for some $u\in [0,s-1]$ with $\beta_{u+1}, \cdots, \beta_s\geq \alpha_1$ and $\beta_u, \alpha_1$ are incomparable or $u=0.$ It follows from Corollary \ref{direct} that
$$\overline{\gamma_1}~\cdots~\overline{\gamma_t}~\overline{\alpha_{v+1}}~\cdots~\overline{\alpha_r}
=\overline{\beta_1}~\cdots~\overline{\beta_u}~\overline{\alpha_1}~\cdots~\overline{\alpha_r}.$$ Note that both sides of the above equality are normal forms of $\ig(Y).$
As $v\geq 1$, we have $\gamma_t=\alpha_v$, so that to avoid contradiction we have $v=1$ and  then $x_{i_1}\cdots x_1z_{j_t}x_1\cdots x_{i_1}=x_{i_1}$. Hence by Condition (P)
$$\overline{z_1}~\cdots~\overline{z_{j_t}}~\overline{x_1z_{j_t}x_1}~\cdots~\overline{x_{i_1}\cdots x_1z_{j_t}x_1\cdots x_{i_1}}$$$$=\overline{y_1}\cdots \overline{y_{l_u}}~\overline{y_{l_u+1}\cdots y_{l_s}x_1y_{l_s}\cdots y_{l_u+1}}~\cdots~
\overline{y_{l_s}x_1y_{l_s}}~\overline{x_1}~\cdots~\overline{x_{i_1}}$$ and so $$\overline{z_1}~\cdots~\overline{z_{j_t}}~\overline{x_1}~\cdots~\overline{x_{i_1}}=\overline{y_1}\cdots \overline{y_{l_s}}~\overline{x_1}~\cdots~\overline{x_{i_1}},$$ which implies $\overline{z_1}~\cdots~\overline{z_{j_t}}~\overline{x_1}=\overline{y_1}\cdots \overline{y_{l_s}}~\overline{x_1}$.

\medskip

(v) If $\gamma_t\geq \alpha_1$ and $\beta_s\geq \alpha_1$, then by Lemma \ref{product} we have the following two almost normal forms for $\overline{z_1}~\cdots~\overline{z_{j_t}}
~\overline{x_1}~\cdots~\overline{x_{i_r}}$ and  $\overline{y_1}~\cdots~\overline{y_{l_s}}
~\overline{x_1}~\cdots~\overline{x_{i_r}},$ namely, $$\overline{z_1}\cdots \overline{z_{j_v}}~\overline{z_{j_v+1}\cdots z_{j_t}x_1z_{j_t}\cdots z_{j_v+1}}~\cdots~\overline{z_{j_t}x_1z_{j_t}}~\overline{x_1}~\cdots~\overline{x_{i_1}}~\cdots~\overline{x_{i_r}}$$ for some $v\in [0,t-1]$ such that $\gamma_{v+1}, \cdots, \gamma_t\geq \alpha_1$ and $\gamma_v, \alpha_1$ are incomparable or $v=0,$ and
$$\overline{y_1}\cdots \overline{y_{l_u}}~\overline{y_{l_u+1}\cdots y_{l_s}x_1y_{l_s}\cdots y_{l_u+1}}~\cdots~\overline{y_{l_s}x_1y_{l_s}}~\overline{x_1}~\cdots~\overline{x_{i_1}}~\cdots~\overline{x_{i_r}}$$ for some $u\in [0,s-1]$ such that $\beta_{u+1}, \cdots, \beta_s\geq \alpha_1$ and $\beta_u, \alpha_1$ are incomparable or $u=0$.
Hence by Condition $(P),$ $$\overline{z_1}\cdots \overline{z_{j_v}}~\overline{z_{j_v+1}\cdots z_{j_t}x_1z_{j_t}\cdots z_{j_v+1}}~\cdots~\overline{z_{j_t}x_1z_{j_t}}~\overline{x_1}~\cdots~\overline{x_{i_1}}$$$$=\overline{y_1}\cdots \overline{y_{l_u}}~\overline{y_{l_u+1}\cdots y_{l_s}x_1y_{l_s}\cdots y_{l_u+1}}~\cdots~\overline{y_{l_s}x_1y_{l_s}}~\overline{x_1}~\cdots~\overline{x_{i_1}},$$
so that $$\overline{z_1}\cdots~\overline{z_{j_t}}~\overline{x_1}~\cdots~\overline{x_{i_1}}=
\overline{y_1}\cdots~\overline{y_{l_s}}~\overline{x_1}~\cdots~\overline{x_{i_1}}$$
and hence $\overline{z_1}\cdots~\overline{z_{j_t}}~\overline{x_1}=\overline{y_1}\cdots~\overline{y_{l_s}}~\overline{x_1}.$

\medskip

(vi) If $\gamma_t\geq \alpha_1$ and $\beta_s, \alpha_1$ are incomparable, then by Lemma \ref{product} $$\overline{z_1}\cdots \overline{z_{j_v}}~\overline{z_{j_v+1}\cdots z_{j_t}x_1z_{j_t}\cdots z_{j_v+1}}~\cdots~\overline{z_{j_t}x_1z_{j_t}}~\overline{x_1}~\cdots~\overline{x_{i_1}}~\cdots~\overline{{x_{i_r}}}=\overline{y_1}\cdots \overline{y_{l_s}}~\overline{x_1}~\cdots~\overline{x_{i_1}}~\cdots~\overline{{x_{i_r}}}$$ for some $v\in [0,t-1]$ with $\gamma_{v+1}, \cdots, \gamma_t\geq \alpha_1$ and $\gamma_v, \alpha_1$ are incomparable or $v=0$. Note that both sides of the above equality are in almost normal form. Again by Condition (P) $$\overline{z_1}\cdots \overline{z_{j_v}}~\overline{z_{j_v+1}\cdots z_{j_t}x_1z_{j_t}\cdots z_{j_v+1}}~\cdots~\overline{z_{j_t}x_1z_{j_t}}~\overline{x_1}~\cdots~\overline{x_{i_1}}=\overline{y_1}\cdots \overline{y_{l_s}}~\overline{x_1}~\cdots~\overline{x_{i_1}}$$
so that
$$\overline{z_1}\cdots~\overline{z_{j_t}}~\overline{x_1}~\cdots~\overline{x_{i_1}}=
\overline{y_1}\cdots~\overline{y_{l_s}}~\overline{x_1}~\cdots~\overline{x_{i_1}}
$$
and hence $\overline{z_1}\cdots~\overline{z_{j_t}}~\overline{x_1}=\overline{y_1}\cdots~\overline{y_{l_s}}~\overline{x_1}.$

\medskip

From the above case-by-case analysis, we  deduce that $\overline{x_1}~\cdots~\overline{x_{i_r}}~\mathcal{R}^*~\overline{x_1}$, and similarly we can show that $\overline{x_1}~\cdots~\overline{x_{i_r}}~\mathcal{L}^*~\overline{x_{i_r}},$ so that $\ig(B)$ is an abundant semigroup.\end{proof}

We now aim to find examples of normal bands $B$ for which $\ig(B)$ satisfies Condition (P), so that by Proposition \ref{simple normal band}, $\ig(B)$ is abundant.

\medskip

A band $B=\bigcup_{\alpha\in Y} B_{\alpha}$ is called  {\it $Y$-basic} if it is a semilattice $Y$ of rectangular bands $B_\alpha$, $\alpha\in Y$, where $B_\alpha$ is either a left zero band or a right zero band. Any left or right regular band (that is, where  {\em every} $B_{\alpha}$ is left zero, or {\em every} $B_{\alpha}$ is right zero) is $Y$-basic, but the class of $Y$-basic bands is easily seen to be larger. We now justify the terminology.

\begin{lem}\label{transfer}
Let $B=\bigcup_{\alpha\in Y} B_{\alpha}$ be a  band. Then $B$ is $Y$-basic if and only if it has the property that for any  $e\in B_{\alpha}$ and $f\in B_{\beta}$ the pair $(e,f)$ being  basic pair $B$ is equivalent to  $(\alpha, \beta)$ being basic  in $Y$. 
\end{lem}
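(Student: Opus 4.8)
The plan is to first notice that one half of the stated inner equivalence holds for \emph{every} band, so that only one implication carries real content. Recall that $(\alpha,\beta)$ is basic in the semilattice $Y$ precisely when $\alpha\beta\in\{\alpha,\beta\}$, that is, when $\alpha$ and $\beta$ are comparable. Now if $e\in B_\alpha$, $f\in B_\beta$ and $(e,f)$ is a basic pair in $B$, then one of $ef,fe$ lies in $\{e,f\}$; since $ef,fe\in B_{\alpha\beta}$ while $e\in B_\alpha$ and $f\in B_\beta$, this forces $\alpha\beta=\alpha$ or $\alpha\beta=\beta$, whence $\alpha,\beta$ are comparable and $(\alpha,\beta)$ is basic in $Y$. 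Thus in any band $(e,f)$ basic in $B$ implies $(\alpha,\beta)$ basic in $Y$, and the property in the statement is equivalent to the single implication: whenever $\alpha,\beta$ are comparable, every pair $e\in B_\alpha$, $f\in B_\beta$ is basic in $B$.

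For the forward direction I would assume $B$ is $Y$-basic and verify this implication. Given comparable $\alpha,\beta$, by the symmetry of the notion of a basic pair one may assume $\alpha\le\beta$, so that $ef,fe\in B_\alpha$. If $B_\alpha$ is a left zero band, then from $e\,(ef)=ef$ together with the left zero law $e\,(ef)=e$ (valid since both $e$ and $ef$ lie in $B_\alpha$) we get $ef=e$; dually, if $B_\alpha$ is a right zero band then $(fe)\,e=fe$ and the right zero law give $fe=e$. In either case $\{e,f\}\cap\{ef,fe\}\ne\emptyset$, so $(e,f)$ is basic, as required.

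For the converse I would assume the property and deduce that each $B_\alpha$ is left zero or right zero. Taking $\beta=\alpha$, the pair $(\alpha,\alpha)$ is basic in $Y$, so by hypothesis every pair of elements of $B_\alpha$ is a basic pair in $B$, equivalently in $B_\alpha$. Writing the rectangular band $B_\alpha$ as $I\times\Lambda$ with $(i,\lambda)(j,\mu)=(i,\mu)$, a direct check shows that $\bigl((i,\lambda),(j,\mu)\bigr)$ is basic exactly when $\lambda=\mu$ or $i=j$, i.e.\ when the two elements are $\mathcal{L}$- or $\mathcal{R}$-related. If both $|I|\ge 2$ and $|\Lambda|\ge 2$ one could choose $i\ne j$ and $\lambda\ne\mu$, producing a non-basic pair; hence $|I|=1$, so $B_\alpha$ is right zero, or $|\Lambda|=1$, so $B_\alpha$ is left zero. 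Therefore $B$ is $Y$-basic.

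The only routine points are the left/right zero computation and the coordinate description of basic pairs in a rectangular band, neither of which is a genuine obstacle. The clarifying step that makes the whole argument short is the opening observation that the implication ``$(e,f)$ basic $\Rightarrow (\alpha,\beta)$ basic'' holds automatically, so that the equivalence to be characterised collapses to a single implication governed entirely by the internal structure of the individual blocks $B_\alpha$.
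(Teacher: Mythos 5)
Your proof is correct and follows essentially the same route as the paper's: the automatic implication ``$(e,f)$ basic $\Rightarrow(\alpha,\beta)$ basic'', the left/right zero computations $e(ef)=e$ and $(fe)e=e$ for the forward direction, and the observation that a rectangular band with all pairs basic must be left or right zero for the converse. The only difference is that you spell out in coordinates what the paper dismisses as ``clearly'' (the paper also fixes a single $e\in B_\alpha$ rather than quantifying over all pairs), which is a matter of detail rather than of approach.
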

\begin{proof} Suppose that $B$ has the given property on basic pairs. For any $\alpha\in Y$ fix $e\in B_{\alpha}$; since
$(e,f)$ must be basic in $B$ for any $f\in B_{\alpha}$, clearly $B_{\alpha}$ is a left or a right zero semigroup.

Conversely, suppose that $B$ is $Y$-basic.  Let $e\in B_{\alpha}$ and $f\in B_{\beta}$. If $(e,f)$ is basic, certainly  so is $(\alpha,\beta)$. For the converse, without loss of generality, suppose that $\alpha\leq \beta$. Then $ef,fe\in B_{\alpha}$. As $B$ is a $Y$-basic band, we have $B_\alpha$ is either a left zero band  or a right zero band. If $B_\alpha$ is a left zero band, then $e(ef)=e$, i.e. $ef=e$, so $(e,f)$ is a basic pair. If $B_\alpha$ is a right zero band, then $(fe)e=e$, i.e. $fe=e$, which again implies that $(e,f)$ is a basic pair.
\end{proof}

It follows from Lemma \ref{transfer} that for a $Y$-basic band $B$, every element of $\ig(B)$ has an almost normal form (which of course need  not be unique), say, $\overline{x_1}~\cdots~\overline{x_n}$  with 
$x_i\in B_{\alpha_i}$ and $\alpha_i$ and $\alpha_{i+1}$ incomparable, for all $i\in [1, n-1].$

\begin{lem}\label{lem:semilatticebasic}
Let $B$ be a $Y$-basic band. Then $\ig(B)$ satisfies Condition $(P).$
\end{lem}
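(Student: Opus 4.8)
The plan is to verify Condition (P) directly from the definition, exploiting the crucial feature of a $Y$-basic band established in Lemma~\ref{transfer}: for $e\in B_\alpha$ and $f\in B_\beta$, the pair $(e,f)$ is basic in $B$ if and only if $(\alpha,\beta)$ is basic in $Y$. This means that all basic products in a $Y$-basic band are ``visible'' already at the level of the underlying semilattice $Y$, so that the reduction relation $\longrightarrow$ on $\overline{B}^+$ is tightly controlled by the corresponding reduction in $\overline{Y}^+$, where normal forms are unique by Lemma~\ref{uniqueness}. I would aim to reduce both statements (i) and (ii) of Condition~(P) to the case of a single block lying in one rectangular band $B_{\alpha_s}$, where uniqueness of normal forms (Lemma~\ref{rect-unique}) can be brought to bear.

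\emph{First} I would unpick what Condition~(P)(i) actually asks. We are given almost normal forms $\overline{u_1}\cdots\overline{u_n}=\overline{v_1}\cdots\overline{v_m}$ of common $Y$-length $r$, with left to right significant indices $i_1,\dots,i_r=n$ and $l_1,\dots,l_r=m$; by Lemma~\ref{indices are equal} the ordered $Y$-components agree, so $u_{i_s}$ and $v_{l_s}$ lie in the same $B_{\alpha_s}$ for each $s$. The hypothesis $u_{i_s}\mathrel{\mathcal{L}}v_{l_s}$ then says these two idempotents of $B_{\alpha_s}$ are $\mathcal{L}$-related in the rectangular band. I would want to show $\overline{u_1}\cdots\overline{u_{i_s}}=\overline{v_1}\cdots\overline{v_{l_s}}$ by showing that both sides, after a suitable ``projection'' argument, reduce to the same word. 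The natural tool is Corollary~\ref{important}, which already gives $\overline{u_1}\cdots\overline{u_{i_s}}\mathrel{\mathcal{R}}\overline{v_1}\cdots\overline{v_{l_s}}$ (via the common prefix equality); combining the $\mathcal{R}$-information from the prefix with the $\mathcal{L}$-hypothesis $u_{i_s}\mathrel{\mathcal{L}}v_{l_s}$ at the final letter should pin down equality.

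\emph{The key technical step}, and the one I expect to carry the real weight, is to establish a ``block independence'' phenomenon: in a $Y$-basic band, when two almost normal forms of equal $Y$-length coincide in $\ig(B)$, the individual blocks are forced to match up segment by segment, analogous to Lemma~\ref{segment-equal} for the locally large case. The mechanism is that any single reduction step $\overline{x_1}\cdots\overline{x_n}\sim p$ either acts strictly inside one $B_\alpha$-block or splits a letter $x_k=uv$ with $u,v$ in comparable components; by Lemma~\ref{transfer} the comparability is detected in $Y$, and so — because basic products in the $Y$-basic band behave exactly as in $Y$ — such a split cannot merge material across the incomparable boundaries defining the significant indices. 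I would formalise this by tracking, through an arbitrary derivation, how each significant index can migrate (as in Lemma~\ref{useful}), and argue that the prefix word up to $i_s$ is determined up to equality in $\ig(B)$ by its image. The left zero / right zero structure of each $B_\alpha$ is what makes the $\mathcal{L}$-hypothesis (resp. the $\mathcal{R}$-hypothesis in~(ii)) usable: in a left zero band $\mathcal{L}$ is trivial while $\mathcal{R}$ is universal, and conversely for right zero, so one of the two Green's relations collapses and reduces the comparison at the boundary letter to a direct calculation.

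\emph{Finally}, statement~(ii) of Condition~(P) is the left-right dual of~(i), obtained by applying the entire argument to the reversed words and the right to left significant indices, using the duals of Lemma~\ref{useful} and Corollary~\ref{important} noted in the text. So once~(i) is proved, (ii) follows by symmetry with no new ideas. \emph{The hardest part} will be making the ``block independence'' rigorous, i.e.\ showing that equality in $\ig(B)$ of the full words really does descend to equality of the prefixes $\overline{u_1}\cdots\overline{u_{i_s}}$ and $\overline{v_1}\cdots\overline{v_{l_s}}$; the danger is that a derivation might temporarily create letters in lower components that later cancel, and I must confirm that the $Y$-basic hypothesis (via Lemma~\ref{transfer}) forbids any such interaction from altering the higher-component prefix. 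I expect the cleanest route is to combine the $\mathcal{R}$-relatedness of prefixes from Corollary~\ref{important} with a careful use of Lemma~\ref{observation2} in the ambient weakly abundant semigroup $\ig(B)$, reducing everything to checking the action of idempotents.
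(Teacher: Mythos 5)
Your proposal breaks down at exactly the step you flag as carrying the real weight: the unconditional ``block independence'' claim --- that for a $Y$-basic band any two equal almost normal forms have equal blocks, in analogy with Lemma \ref{segment-equal} --- is \emph{false}, and Condition (P) cannot be reached that way. Concretely, let $Y=\{\tau,\gamma,\beta,\delta\}$ with $\tau$ the top, $\gamma,\beta$ incomparable and $\delta=\gamma\beta$ the bottom, and let $B=\mathcal{B}(Y;B_\alpha,\phi_{\alpha,\beta})$ be the normal band with $B_\tau=\{v\}$, $B_\gamma=\{p,q\}$ a right zero band, $B_\beta=\{w\}$, $B_\delta=\{z\}$, where $v\phi_{\tau,\gamma}=q$ and $v\phi_{\tau,\beta}=w$ (the other structure maps are forced). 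This band is $Y$-basic; the pairs $(p,v)$ and $(v,w)$ are basic with $vp=p$, $pv=q$, $vw=w$, so in $\ig(B)$ we have
\[
\overline{q}\,\overline{w}=\overline{pv}\,\overline{w}=\overline{p}\,\overline{v}\,\overline{w}=\overline{p}\,\overline{vw}=\overline{p}\,\overline{w}.
\]
Here $\overline{q}\,\overline{w}$ and $\overline{p}\,\overline{w}$ are both almost normal forms with the same $Y$-length and ordered $Y$-components, yet the prefixes $\overline{q}$ and $\overline{p}$ are distinct (distinct idempotents of $B$ stay distinct in $\ig(B)$, or use Corollary \ref{equal-coro}). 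So no segment-matching lemma holds, and the hypothesis $u_{i_s}\,\mathcal{L}\,v_{l_s}$ in Condition (P) is essential, not a boundary formality: in this example $p$ and $q$ are \emph{not} $\mathcal{L}$-related (in a right zero band $\mathcal{L}$ is trivial), which is precisely why (P) survives. Incidentally you have the Green's relations backwards --- in a left zero band $\mathcal{R}$ is trivial and $\mathcal{L}$ is universal, and dually for right zero --- which would further confuse the case analysis you sketch.

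The paper's proof shows how the hypothesis must enter from the start, and it needs no case split and no ``block independence''. By Corollary \ref{important}, $\overline{v_1}\cdots\overline{v_{l_s}}=\overline{u_1}\cdots\overline{u_{i_s}}\,\overline{e_1}\cdots\overline{e_k}$ with every $e_j$ in a component $\geq\alpha_s$; by Lemma \ref{transfer} each successive pair $(u_{i_s}e_1\cdots e_{j},e_{j+1})$ is basic, so the tail collapses to the single letter $\overline{u_{i_s}e_1\cdots e_k}$ --- a letter that may genuinely differ from $\overline{u_{i_s}}$, which is exactly the phenomenon in the example above. Only now does $v_{l_s}\,\mathcal{L}\,u_{i_s}$ intervene: it gives $v_{l_s}u_{i_s}=v_{l_s}$, so right multiplying the equality by $\overline{u_{i_s}}$ leaves $\overline{v_1}\cdots\overline{v_{l_s}}$ unchanged, while on the other side $u_{i_s}e_1\cdots e_k u_{i_s}=u_{i_s}$ by the rectangular band identity, absorbing the tail and yielding $\overline{v_1}\cdots\overline{v_{l_s}}=\overline{u_1}\cdots\overline{u_{i_s}}$; statement (ii) is dual. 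Your closing suggestion to route the argument through Lemma \ref{observation2} is also misdirected: that lemma upgrades $\widetilde{\mathcal{R}}$ to $\mathcal{R}^*$ and belongs to the proof of Proposition \ref{simple normal band}, not to the verification of Condition (P), which is a purely combinatorial statement about equalities of words.
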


\begin{proof}
Let $\overline{x_{1}}~\cdots~\overline{x_n}=
\overline{y_{1}}~\cdots~\overline{y_m}\in \ig(B)$ be in almost normal form with $Y$-length $r$, left to right significant indices $i_1, \cdots, i_r=n$, $j_1, \cdots, j_r=m$, respectively, and ordered $Y$-components $\alpha_1, \cdots, \alpha_r.$ It then follows from Corollary \ref{important} that for any $s\in [1,r],$ either
\[\overline{y_1}~\cdots \overline{y_{j_s}}=\overline{x_1}~\cdots~\overline{x_{i_s}}\]
and we are done, or
\[\overline{y_1}~\cdots \overline{y_{j_s}}=\overline{x_1}~\cdots~\overline{x_{i_s}}~\overline{e_1}~\cdots~\overline{e_m}\] where for all $k\in [1,m],$ $e_k\in B_{\delta_k}$ with $\delta_k\geq \alpha_{s}$.  In this case by Lemma \ref{transfer}, we have $$\overline{x_{i_s}}~\overline{e_1}~\cdots~\overline{e_m}=\overline{x_{i_s}e_1\cdots e_m},$$ so that if we assume $x_{i_s}~\mathcal{L}~y_{j_s},$ then
$$\overline{y_1}~\cdots \overline{y_{j_s}}=\overline{y_1}~\cdots \overline{y_{j_s}}~\overline{x_{i_s}}=\overline{x_1}~\cdots~\overline{x_{i_s}e_1\cdots e_m}~\overline{x_{i_s}}=\overline{x_1}~\cdots~\overline{x_{i_s}e_1\cdots e_mx_{i_s}}=\overline{x_1}~\cdots~\overline{x_{i_s}}.$$

Together with the dual, we have shown that $\ig(B)$ satisfies Condition (P).
\end{proof}

Let $B=\mathcal{B}(Y;B_\alpha,\phi_{\alpha,\beta})$ be a normal band. Clearly $B$ is  {\em locally small} in the sense that the local submonoids $eBe$ are as small as they can be, that is, for $e\in B_{\alpha}$, we have
$eBe=\{ e\} \cup\{ e\phi_{\alpha,\beta}:\alpha >\beta\}=\{ e\phi_{\alpha,\beta}:\alpha \geq\beta\}$.
We say that $B$ is a {\it pliant} if for every $\alpha\in Y$, there exists an $a_\alpha\in B_\alpha$ such that for all $\beta>\alpha$ and $u\in B_{\beta}$, we have $u\phi_{\beta, \alpha}=a_\alpha$. 

\begin{lem}\label{lem:pliant}
Let  $B=\mathcal{B}(Y;B_\alpha,\phi_{\alpha,\beta})$  be a pliant normal band. Then $\ig(B)$ satisfies Condition $(P).$
\end{lem}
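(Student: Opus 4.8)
The plan is to mirror the argument for $Y$-basic bands in Lemma~\ref{lem:semilatticebasic}, proving Condition~$(P)(i)$ in full and leaving $(P)(ii)$ to a dual argument. So I would start from two almost normal forms $\overline{x_1}\cdots\overline{x_n}=\overline{y_1}\cdots\overline{y_m}$ in $\ig(B)$ with common $Y$-length $r$, left to right significant indices $i_1,\dots,i_r=n$ and $j_1,\dots,j_r=m$, and common ordered $Y$-components $\alpha_1,\dots,\alpha_r$ (equal by Lemma~\ref{indices are equal}); I fix $s$, write $\alpha=\alpha_{i_s}$, and assume $x_{i_s}\,\mathcal{L}\,y_{j_s}$. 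Corollary~\ref{important} then supplies tail letters $u_1,\dots,u_k$ of $Y$-component $\geq\alpha$ with
\[\overline{y_1}\cdots\overline{y_{j_s}}=\overline{x_1}\cdots\overline{x_{i_s}}\;\overline{u_1}\cdots\overline{u_k},\]
and the whole problem reduces to absorbing this tail when $k\geq 1$.

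The key device I would use is to append $\overline{x_{i_s}}$ on the right and then pull the tail down into the single rectangular band $B_\alpha$. Since $y_{j_s}\,\mathcal{L}\,x_{i_s}$ the pair $(y_{j_s},x_{i_s})$ is basic and $y_{j_s}x_{i_s}=y_{j_s}$, so appending $\overline{x_{i_s}}$ does not change $\overline{y_1}\cdots\overline{y_{j_s}}$; equating the two sides and invoking Corollary~\ref{almost normal form-simple} (legitimate because $\alpha\leq\delta_t$ for each tail component $\delta_t$) gives
\[\overline{y_1}\cdots\overline{y_{j_s}}=\overline{x_1}\cdots\overline{x_{i_s}}\;\overline{c_1}\cdots\overline{c_k}\;\overline{x_{i_s}},\qquad c_t=u_t\phi_{\delta_t,\alpha}\in B_\alpha.\]
Here is where pliancy does the essential work: if $\delta_t>\alpha$ then $c_t=u_t\phi_{\delta_t,\alpha}=a_\alpha$, while if $u_t\in B_\alpha$ the accompanying $v_t$ from Corollary~\ref{important} satisfies $u_t=v_tu_t=(v_t\phi_{\theta_t,\alpha})u_t=a_\alpha u_t$, so that $a_\alpha\,\mathcal{R}\,u_t=c_t$. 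Thus all the $c_t$ lie in the single $\mathcal{R}$-class of $a_\alpha$ in $B_\alpha$, consecutive pairs are basic, and $\overline{c_1}\cdots\overline{c_k}$ collapses to $\overline{c_k}$.

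Finally I would locate $c_k$ using the hypothesis again: pulling the product $y_{j_s}=u_k'\cdots u_1'x_{i_s}u_1\cdots u_k$ down into $B_\alpha$ in the same way shows its $\mathcal{L}$-class is that of its rightmost factor $c_k$, so $x_{i_s}\,\mathcal{L}\,y_{j_s}$ forces $c_k\,\mathcal{L}\,x_{i_s}$; hence $x_{i_s}c_k=x_{i_s}$ and $c_kx_{i_s}=c_k$, and working inside $B_\alpha\subseteq B$ we obtain
\[\overline{x_{i_s}}\;\overline{c_1}\cdots\overline{c_k}\;\overline{x_{i_s}}=\overline{x_{i_s}}\;\overline{c_k}\;\overline{x_{i_s}}=\overline{x_{i_s}}.\]
Substituting back yields $\overline{y_1}\cdots\overline{y_{j_s}}=\overline{x_1}\cdots\overline{x_{i_s}}$, proving $(P)(i)$, and the left-handed dual gives $(P)(ii)$. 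The hard part, and the only place pliancy is truly needed, is precisely the collapse of the tail: in a general normal band the pairs $(x_{i_s},u_t)$ need not be basic, so the one-letter collapse available in the $Y$-basic case fails, and pliancy is exactly the hypothesis guaranteeing that, once the tail is descended into $B_\alpha$, every surviving letter is confined to the $\mathcal{R}$-class of $a_\alpha$, whence it reduces to its last letter and the hypothesis $x_{i_s}\,\mathcal{L}\,y_{j_s}$ absorbs it.
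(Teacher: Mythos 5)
Your proof is correct and follows essentially the same route as the paper's: both use Corollary \ref{important} to expose the tail $\overline{u_1}\cdots\overline{u_k}$, Corollary \ref{almost normal form-simple} to project it into $B_{\alpha}$, pliancy to place every projected letter in the $\mathcal{R}$-class of $a_{\alpha}$ so that the tail collapses to a single letter, and the hypothesis $x_{i_s}\,\mathcal{L}\,y_{j_s}$ combined with the product expression for $y_{j_s}$ to absorb that letter. The only difference is cosmetic: the paper absorbs the collapsed tail via the basic pair $(x_{i_s},\, c_1\cdots c_k)$ arising from $x_{i_s}=x_{i_s}y_{j_s}$, whereas you append an extra $\overline{x_{i_s}}$, deduce $c_k\,\mathcal{L}\,x_{i_s}$, and collapse the sandwich $\overline{x_{i_s}}\,\overline{c_k}\,\overline{x_{i_s}}=\overline{x_{i_s}}$ --- the same idea in different packaging.
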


\begin{proof}
First note that since $B$ is a pliant normal band, there exists  $a_\alpha\in B_\alpha$ be such that for any $\beta>\alpha$ and $u\in B_\beta,$ $u\phi_{\beta, \alpha}=a_\alpha.$

Let $\overline{x_{1}}~\cdots~\overline{x_n}=
\overline{y_{1}}~\cdots~\overline{y_m}\in \ig(B)$ be in almost normal form with $Y$-length $r$, left to right significant indices $i_1, \cdots, i_r=n$, $j_1, \cdots, j_r=m$, respectively, and ordered $Y$-components $\alpha_1, \cdots, \alpha_r.$ We may assume from Corollary \ref{important} that $$\overline{y_{1}}~\cdots~\overline{y_{j_l}}=\overline{x_1}~\cdots~\overline{x_{i_l}}~\overline{u_1}~\cdots~\overline{u_s}$$ such that for all $k\in [1,s]$ we have  $u_k\in B_{\delta_k}$ with $\delta_k>\alpha_{l},$ so that $u_k\phi_{\delta_k, \alpha_{l}}=a_{\alpha_{l}}$; or $u_k\in B_{\alpha_{l}}$ with $v_k u_k=u_k$ for some $v_k\in B_{\eta_k}$ such that $\eta_k>\alpha_{l},$ and in this case we have $a_{\alpha_{l}}u_k=u_k$, so that $a_{\alpha_{l}}~\mathcal{R}~u_k.$ Thus the idempotents $u_1\phi_{\delta_1, \alpha_{l}}, \cdots, u_s\phi_{\delta_s, \alpha_{l}}$ are all $\mathcal{R}$-related, and so $$\overline{x_{i_l}}~\overline{u_1}~\cdots~\overline{u_s}=\overline{x_{i_l}}~\overline{u_1\phi_{\delta_1, \alpha_{l}}}~\cdots~\overline{u_s\phi_{\delta_s, \alpha_{l}}}=\overline{x_{i_l}}~\overline{u_1\phi_{\delta_1, \alpha_{l}}\cdots u_s\phi_{\delta_s, \alpha_{l}}}.$$
 On the other hand, again using Corollary~ \ref{important} we have $y_{j_l}=wx_{i_l}u_1\cdots u_s,$ 
 for some $w\in B_{\alpha_l}$. Hence if we assume that $x_{i_l}~\mathcal{L}~y_{j_l}$, then $x_{i_l}=x_{i_l}u_1\cdots u_s,$ and so $x_{i_l}=x_{i_l}(u_1\phi_{\delta_1, \alpha_{l}})\cdots (u_s\phi_{\delta_s, \alpha_{l}}),$ so that $$\overline{x_{i_l}}~\overline{u_1\phi_{\delta_1, \alpha_{l}}\cdots u_s\phi_{\delta_s, \alpha_{l}}}=\overline{x_{i_l}(u_1\phi_{\delta_1, \alpha_{l}})\cdots (u_s\phi_{\delta_s, \alpha_{l}})}=\overline{x_{i_l}}.$$ Hence $\overline{y_1}~\cdots~\overline{y_{j_l}}=\overline{x_1}~\cdots~\overline{x_{i_l}}$ as required.
\end{proof}

As an immediate consequence of Proposition~\ref{simple normal band} and Lemmas~\ref{lem:semilatticebasic} and ~\ref{lem:pliant} we have the following result.

\begin{them}
Let $B$ be a  normal band that is $Y$-basic or pliant. Then $\ig(B)$ is abundant.
\end{them}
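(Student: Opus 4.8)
The plan is simply to feed the two structural lemmas into Proposition~\ref{simple normal band}. That proposition asserts that $\ig(B)$ is abundant whenever $\ig(B)$ satisfies Condition~$(P)$ and, in addition, $B$ is either normal or locally large. Since in the present statement $B$ is assumed to be normal, the only thing that remains to be verified is that $\ig(B)$ satisfies Condition~$(P)$, and I would handle this by splitting into the two cases permitted by the hypothesis.

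First suppose that $B$ is $Y$-basic. Then Lemma~\ref{lem:semilatticebasic} applies directly and tells us that $\ig(B)$ satisfies Condition~$(P)$; I note in passing that Lemma~\ref{lem:semilatticebasic} requires no normality assumption, so normality is used here only to meet the standing hypothesis of the proposition. Suppose instead that $B$ is pliant. Pliancy is a notion defined only for normal bands written as $B=\mathcal{B}(Y;B_\alpha,\phi_{\alpha,\beta})$, so the normality is built into the definition, and Lemma~\ref{lem:pliant} yields that $\ig(B)$ satisfies Condition~$(P)$. In either case we then have that $B$ is normal and $\ig(B)$ satisfies Condition~$(P)$, whence Proposition~\ref{simple normal band} delivers the abundancy of $\ig(B)$.

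There is essentially no obstacle in this final step: all of the substantive work has already been carried out, namely in establishing Condition~$(P)$ for $Y$-basic bands (Lemma~\ref{lem:semilatticebasic}) and for pliant normal bands (Lemma~\ref{lem:pliant}), and in proving the general implication of Proposition~\ref{simple normal band}. The only mild point to attend to while assembling the argument is the matching of hypotheses, that is, checking that the normality demanded by Proposition~\ref{simple normal band} is indeed supplied in each case, either by explicit assumption in the $Y$-basic case or automatically from the definition of pliancy in the pliant case.
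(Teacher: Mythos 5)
Your proposal is correct and is exactly the paper's own argument: the theorem is stated there as an immediate consequence of Proposition~\ref{simple normal band} together with Lemmas~\ref{lem:semilatticebasic} and~\ref{lem:pliant}. Your extra care in checking that normality is supplied in both cases (explicitly for $Y$-basic, by definition for pliant) is a sound reading of how the hypotheses fit together.
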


\section{A normal band $B$ for which $\ig(B)$ is not abundant}\label{sec:normal bands}

From Section \ref{sec:(P)}, we know that the free idempotent idempotent generated semigroup $\ig(B)$ over a normal band $B$ satisfying Condition (P) is an abundant semigroup. Therefore, one would like to ask whether $\ig(B)$ is abundant for any normal band $B$. In this section we answer the question in the negative by constructing a 10-element normal band $B$ such that $\ig(B)$ is not abundant.

Throughout this section, we will use $\mathcal{B}(Y; B_\alpha, \phi_{\alpha,\beta})$ as standard notation for a normal band.

\begin{lem}\label{basic transfer-normal}
Let $B$ be a normal band, and let $x\in B_{\beta}, y\in B_\gamma$ with $\beta,\gamma\geq \alpha$. Then $(x,y)$ is a basic pair implies $(x\phi_{\beta,\alpha}, y\phi_{\gamma,\alpha})$ is a basic pair and $$(x\phi_{\beta,\alpha})(y\phi_{\gamma,\alpha})=(xy)\phi_{\delta, \alpha},$$ where $\delta$ is minimum of $\beta$ and $\gamma.$
\end{lem}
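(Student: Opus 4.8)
The claim concerns a normal band $B=\mathcal{B}(Y;B_\alpha,\phi_{\alpha,\beta})$, elements $x\in B_\beta$, $y\in B_\gamma$ with $\beta,\gamma\geq\alpha$, and asks me to show that if $(x,y)$ is a basic pair then so is $(x\phi_{\beta,\alpha},y\phi_{\gamma,\alpha})$, together with the multiplicative identity $(x\phi_{\beta,\alpha})(y\phi_{\gamma,\alpha})=(xy)\phi_{\delta,\alpha}$ where $\delta=\beta\gamma$ is the meet. The natural strategy is to exploit the fact that each $\phi_{\beta,\alpha}$ is a band morphism together with the explicit multiplication formula $uv=(u\phi_{\mu,\mu\nu})(v\phi_{\nu,\mu\nu})$ for $u\in B_\mu,v\in B_\nu$ given in the definition of a strong semilattice of rectangular bands.

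**The multiplicative identity.**
First I would establish the displayed equation, since the basic-pair claim will essentially fall out of it. Write $\delta=\beta\gamma$. Since $\beta,\gamma\geq\delta\geq\alpha$, the maps $\phi_{\beta,\alpha}$, $\phi_{\gamma,\alpha}$ factor through $\delta$ by the coherence law (B2): $\phi_{\beta,\alpha}=\phi_{\beta,\delta}\phi_{\delta,\alpha}$ and $\phi_{\gamma,\alpha}=\phi_{\gamma,\delta}\phi_{\delta,\alpha}$. Hence
\[
(x\phi_{\beta,\alpha})(y\phi_{\gamma,\alpha})
=\bigl((x\phi_{\beta,\delta})\phi_{\delta,\alpha}\bigr)\bigl((y\phi_{\gamma,\delta})\phi_{\delta,\alpha}\bigr).
\]
Now both $x\phi_{\beta,\delta}$ and $y\phi_{\gamma,\delta}$ lie in the single rectangular band $B_\delta$, and $\phi_{\delta,\alpha}\colon B_\delta\to B_\alpha$ is a morphism, so the right-hand side equals $\bigl((x\phi_{\beta,\delta})(y\phi_{\gamma,\delta})\bigr)\phi_{\delta,\alpha}$. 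Finally the defining multiplication formula gives $xy=(x\phi_{\beta,\delta})(y\phi_{\gamma,\delta})$ (using $\beta\gamma=\delta$, so that $\phi_{\beta,\beta\gamma}=\phi_{\beta,\delta}$ and similarly for $\gamma$), and $xy\in B_\delta$, so the whole expression is $(xy)\phi_{\delta,\alpha}$, as required.

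**The basic-pair claim.**
For this I would unwind what it means for $(x,y)$ to be basic. Recall a pair is basic when $\{x,y\}\cap\{xy,yx\}\neq\emptyset$, equivalently at least one of $xy,yx$ equals $x$ or $y$. I would treat the representative case, say $xy=x$ (the cases $xy=y$, $yx=x$, $yx=y$ being symmetric under swapping the roles or passing to the other product). From $xy=x$ one first notes $x\in B_\beta$ forces $\beta=\beta\gamma=\delta$, so $\beta\leq\gamma$; then apply $\phi_{\delta,\alpha}=\phi_{\beta,\alpha}$ to $xy=x$ and use the identity just proved to get $(x\phi_{\beta,\alpha})(y\phi_{\gamma,\alpha})=(xy)\phi_{\beta,\alpha}=x\phi_{\beta,\alpha}$, which is exactly one of the equalities witnessing that $(x\phi_{\beta,\alpha},y\phi_{\gamma,\alpha})$ is basic. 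The dual products are handled identically by applying $\phi$ to $yx$.

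**Main obstacle.**
The only genuinely delicate point is bookkeeping with the meet $\delta$: one must check that in each basic case the relevant product collapses $\delta$ to one of $\beta,\gamma$ (e.g.\ $xy=x\in B_\beta$ gives $\delta=\beta$), so that the morphism being applied is literally $\phi_{\beta,\alpha}$ or $\phi_{\gamma,\alpha}$ rather than an unrelated $\phi_{\delta,\alpha}$. I expect no real difficulty otherwise, since everything reduces to functoriality of the structure maps (B1), (B2) together with the strong-semilattice multiplication formula; the hard work is confined to verifying the factorization through $\delta$ and then checking the four symmetric basic cases, which are routine once the displayed identity is in hand.
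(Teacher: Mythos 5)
Your proof is correct, and it organizes the argument differently from the paper. The paper works directly with the basic pair: it first notes that basicness forces $\beta,\gamma$ to be comparable, then in the case $\beta\geq\gamma$ rewrites, say, $xy=y$ as $(x\phi_{\beta,\gamma})y=y$ inside $B_\gamma$ and applies the morphism $\phi_{\gamma,\alpha}$ together with (B2) to get $y\phi_{\gamma,\alpha}=(x\phi_{\beta,\alpha})(y\phi_{\gamma,\alpha})$, handling the remaining cases symmetrically and dismissing the displayed product identity at the end as ``clear.'' You invert this order: you first prove the identity $(x\phi_{\beta,\alpha})(y\phi_{\gamma,\alpha})=(xy)\phi_{\delta,\alpha}$ with $\delta=\beta\gamma$ \emph{unconditionally} --- for any $x\in B_\beta$, $y\in B_\gamma$ with $\beta,\gamma\geq\alpha$, no basicness needed --- by factoring both structure maps through $B_\delta$ via (B2), using the strong-semilattice multiplication formula, and pushing the product through the morphism $\phi_{\delta,\alpha}$. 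The basic-pair transfer then drops out in one line: if, e.g., $xy=x$, then $\delta=\beta$ and the identity reads $(x\phi_{\beta,\alpha})(y\phi_{\gamma,\alpha})=x\phi_{\beta,\alpha}$, witnessing basicness of the image pair. What your route buys is a strictly more general statement (the multiplication formula for images under structure maps holds for arbitrary pairs, with $\delta$ the meet rather than the minimum) and the collapse of the paper's four-way case analysis into a formality; what the paper's route buys is that each basic case is verified by a short concrete computation without needing to set up the general identity first. Both rest on exactly the same ingredients: (B2), the fact that each $\phi_{\delta,\alpha}$ is a morphism, and the defining multiplication of a strong semilattice of rectangular bands.
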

\begin{proof}
Let $(x,y)$ be a basic pair with $x\in B_{\beta}, y\in B_\gamma$. Then $\beta, \gamma$ are comparable. If $\beta\geq \gamma$, then we either have $xy=y$ or $yx=y$. If $xy=y$, then $(x\phi_{\beta, \gamma})y=y$, so $$y\phi_{\gamma, \alpha}=((x\phi_{\beta, \gamma})y)\phi_{\gamma, \alpha}=(x\phi_{\beta,\alpha})(y\phi_{\gamma,\alpha}),$$ so $(x\phi_{\beta,\alpha}, y\phi_{\gamma,\alpha})$ is a basic pair. If $yx=y$, then $y(x\phi_{\beta,\gamma})=y$, so $$y\phi_{\gamma,\alpha}=(y(x\phi_{\beta,\gamma}))\phi_{\gamma,\alpha}= (y\phi_{\gamma,\alpha})(x\phi_{\beta,\alpha}),$$ so that $(x\phi_{\beta,\alpha}, y\phi_{\gamma,\alpha})$ is a basic pair.

A similar argument holds if $\gamma\geq \beta$. The final part of the lemma is clear.
\end{proof}

\begin{lem}\label{equal}
Let $B$ be a normal band and let $\overline{u_1}~\cdots~\overline{u_n}\in \ig(B)$ with $u_i\in B_{\alpha_i}$ and $\alpha_i\geq \alpha$ for all $i\in [1,n]$. Suppose that $\overline{v_1}~\cdots~\overline{v_m}\in \ig(B)$ with $v_i\in B_{\beta_i}$ for all $i\in [1,m]$ and $\overline{u_1}~\cdots~\overline{u_n}\sim \overline{v_1}~\cdots~\overline{v_m}$. Note that $\beta_i\geq \alpha$, for all $i\in [1,m]$. Then in $\ig(B_\alpha)$ we have $$\overline{u_1\phi_{\alpha_1,\alpha}}~\cdots~\overline{u_n\phi_{\alpha_n,\alpha}}=\overline{v_1\phi_{\beta_1,\alpha}}~\cdots~
\overline{v_m\phi_{\beta_m,\alpha}}.$$
\end{lem}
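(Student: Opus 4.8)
The plan is to exploit that $\sim$ is a single elementary step and to transfer it, letter by letter, through the maps $\phi_{-,\alpha}$ using Lemma~\ref{basic transfer-normal}. First I would record why every $\beta_i\geq\alpha$: a single step either merges an adjacent basic pair $\overline{u_k}\,\overline{u_{k+1}}$ into $\overline{u_ku_{k+1}}$, in which case the new letter lies in $B_{\alpha_k\alpha_{k+1}}$ with $\alpha_k\alpha_{k+1}\geq\alpha$, or it splits a letter $u_k=v_kv_{k+1}$ into a basic pair, in which case $v_k,v_{k+1}$ lie in $\mathcal{D}$-classes above $\alpha_k\geq\alpha$. Since the two words play symmetric roles in the desired equality and now both have all components $\geq\alpha$, I may assume without loss of generality that the step is a reduction, say $\overline{u_1}\cdots\overline{u_n}\longrightarrow\overline{v_1}\cdots\overline{v_m}$, obtained by replacing $\overline{u_k}\,\overline{u_{k+1}}$ (where $(u_k,u_{k+1})$ is a basic pair) by $\overline{u_ku_{k+1}}$; thus $m=n-1$, with $v_i=u_i$ for $i<k$, $v_k=u_ku_{k+1}$, and $v_i=u_{i+1}$ for $i>k$.

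The heart of the argument is a single invocation of Lemma~\ref{basic transfer-normal} applied to the basic pair $(u_k,u_{k+1})$, where $u_k\in B_{\alpha_k}$, $u_{k+1}\in B_{\alpha_{k+1}}$ and $\alpha_k,\alpha_{k+1}\geq\alpha$. This yields that $(u_k\phi_{\alpha_k,\alpha},u_{k+1}\phi_{\alpha_{k+1},\alpha})$ is again a basic pair and that
\[(u_k\phi_{\alpha_k,\alpha})(u_{k+1}\phi_{\alpha_{k+1},\alpha})=(u_ku_{k+1})\phi_{\delta,\alpha},\]
where $\delta=\alpha_k\alpha_{k+1}$ is the smaller of the two comparable $\mathcal{D}$-classes $\alpha_k,\alpha_{k+1}$ (comparable since $(u_k,u_{k+1})$ is basic). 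As $v_k=u_ku_{k+1}\in B_\delta$ we have $\beta_k=\delta$, so the right-hand side is precisely $v_k\phi_{\beta_k,\alpha}$. Crucially, both $u_k\phi_{\alpha_k,\alpha}$ and $u_{k+1}\phi_{\alpha_{k+1},\alpha}$ lie in $B_\alpha$, which is a subsemigroup, so the pair is basic \emph{within} the band $B_\alpha$; hence the corresponding defining relation of $\ig(B_\alpha)$ holds, giving
\[\overline{u_k\phi_{\alpha_k,\alpha}}\;\overline{u_{k+1}\phi_{\alpha_{k+1},\alpha}}=\overline{(u_k\phi_{\alpha_k,\alpha})(u_{k+1}\phi_{\alpha_{k+1},\alpha})}=\overline{v_k\phi_{\beta_k,\alpha}}\]
in $\ig(B_\alpha)$.

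Finally I would observe that every remaining letter is transported unchanged: for $i<k$ we have $u_i=v_i$ and $\alpha_i=\beta_i$, while for $i>k$ we have $u_{i+1}=v_i$ and $\alpha_{i+1}=\beta_i$. Concatenating these trivial identities with the displayed relation for the $k$-th slot gives
\[\overline{u_1\phi_{\alpha_1,\alpha}}\cdots\overline{u_n\phi_{\alpha_n,\alpha}}=\overline{v_1\phi_{\beta_1,\alpha}}\cdots\overline{v_m\phi_{\beta_m,\alpha}}\]
in $\ig(B_\alpha)$, as required. The only point requiring care — and the step I expect to be the main obstacle — is verifying that the transferred pair really is a basic pair \emph{inside} $B_\alpha$, so that one may legitimately apply a defining relation of $\ig(B_\alpha)$ rather than merely of $\ig(B)$; this is exactly what the ``basic pair'' clause of Lemma~\ref{basic transfer-normal}, together with $B_\alpha$ being a subsemigroup, supplies.
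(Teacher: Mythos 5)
Your proof is correct and takes essentially the same approach as the paper: a single application of Lemma~\ref{basic transfer-normal} to the one basic pair involved in the elementary step, combined with the observation that the transferred pair, lying entirely in the subsemigroup $B_\alpha$, is basic there (so the defining relation of $\ig(B_\alpha)$ applies), while all other letters pass through $\phi_{-,\alpha}$ unchanged. The only immaterial difference is orientation: you normalise the step to a merge and invoke symmetry, whereas the paper writes it as a split $u_i=xy$.
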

\begin{proof}
Suppose that $u_i=xy$ is a basic product with $x\in B_{\delta}, y\in B_{\eta}$, for some $i\in [1,n]$. Note that the minimum of $\delta$ and $\eta$ is $\alpha_i$. Then $$\overline{u_1}~\cdots~\overline{u_n}\sim \overline{u_1}~\cdots~\overline{u_{i-1}}~\overline{x}~\overline{y}~\overline{u_{i+1}}~\cdots~\overline{u_n}.$$
If follows from Lemma \ref{basic transfer-normal} that in $\ig(B_\alpha)$
$$
\begin{aligned}
\overline{u_1\phi_{\alpha_1,\alpha}}~\cdots~\overline{u_n\phi_{\alpha_n,\alpha}}&=\overline{u_1\phi_{\alpha_1, \alpha}}~\cdots~\overline{u_{i-1}\phi_{\alpha_{i-1},\alpha}}~\overline{u_i\phi_{\alpha_i, \alpha}}~\overline{u_{i+1}\phi_{\alpha_{i+1},\alpha}}~\cdots~\overline{u_n\phi_{\alpha_n,\alpha}}\\
                                               & =
\overline{u_1\phi_{\alpha_1, \alpha}}~\cdots~\overline{u_{i-1}\phi_{\alpha_{i-1},\alpha}}~\overline{x\phi_{\delta, \alpha}y\phi_{\eta,\alpha}}~\overline{u_{i+1}\phi_{\alpha_{i+1},\alpha}}~\cdots~\overline{u_n\phi_{\alpha_n,\alpha}}\\
                                              & =
\overline{u_1\phi_{\alpha_1, \alpha}}~\cdots~\overline{u_{i-1}\phi_{\alpha_{i-1},\alpha}}~\overline{x\phi_{\delta, \alpha}}~\overline{y\phi_{\eta,\alpha}}~\overline{u_{i+1}\phi_{\alpha_{i+1},\alpha}}~\cdots~\overline{u_n\phi_{\alpha_n,\alpha}}
\end{aligned}
$$ as required.\end{proof}

\begin{coro}\label{equal-coro}
Let $B$ be a normal band and let $x_1, \cdots,x_n, y_1,\cdots,y_m\in B_\alpha$. Then $\overline{x_1}~\cdots~\overline{x_n}=\overline{y_1}~\cdots~\overline{y_m}$ in $\ig(B_\alpha)$ if and only if the equality holds in $\ig(B)$.
\end{coro}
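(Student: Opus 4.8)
The plan is to mirror the proof of Lemma~\ref{word problem lem}, replacing the letter-deletion operation used there for locally large bands by the structure maps $\phi_{\beta,\alpha}$ and invoking Lemma~\ref{equal} in place of Lemma~\ref{basic lem}.

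For the sufficiency I would simply note that any basic pair of $B_\alpha$ is a basic pair of $B$, so that (by Lemma~\ref{homo}, or directly from the presentations exactly as in the proof of Corollary~\ref{regular of IG(B)}) the identity map on $\overline{B_\alpha}$ lifts to a morphism $\ig(B_\alpha)\longrightarrow \ig(B)$. Hence $\overline{x_1}~\cdots~\overline{x_n}=\overline{y_1}~\cdots~\overline{y_m}$ in $\ig(B_\alpha)$ forces the same equality in $\ig(B)$.

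For the necessity, suppose $w:=\overline{x_1}~\cdots~\overline{x_n}=\overline{y_1}~\cdots~\overline{y_m}=:p$ in $\ig(B)$. Then there is a finite sequence $w=w_0\sim w_1\sim\cdots\sim w_s=p$ of single steps. I would first observe that every letter occurring in any $w_k$ lies in some $B_\beta$ with $\beta\geq\alpha$: this holds for $w_0$ by hypothesis, and a single step either splits a letter of $B_\gamma$ into a basic product whose two factors have $Y$-components $\geq\gamma\geq\alpha$, or performs the reverse merge, so the property is preserved along the whole sequence. Consequently, for each $k$ one may form the word $w_k'$ over $\overline{B_\alpha}$ by replacing each letter $\overline{z}$ with $z\in B_\beta$ by $\overline{z\phi_{\beta,\alpha}}$. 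By Lemma~\ref{equal}, each single step $w_k\sim w_{k+1}$ yields $w_k'=w_{k+1}'$ in $\ig(B_\alpha)$, whence $w_0'=w_1'=\cdots=w_s'$ in $\ig(B_\alpha)$. Since the letters of $w_0$ and $w_s$ already lie in $B_\alpha$ and $\phi_{\alpha,\alpha}=1_{B_\alpha}$ by (B1), we have $w_0'=w_0=w$ and $w_s'=w_s=p$, and therefore $w=p$ in $\ig(B_\alpha)$, as required.

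The argument is essentially routine once Lemma~\ref{equal} is in hand; the only point demanding care is the verification that the $Y$-components of all letters in the intermediate words $w_k$ remain $\geq\alpha$, so that each $\phi_{\beta,\alpha}$ is defined and Lemma~\ref{equal} applies at every step, in both the splitting and merging directions covered by the relation $\sim$. I do not anticipate a genuine obstacle beyond this bookkeeping.
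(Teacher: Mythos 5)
Your proof is correct and takes essentially the same route as the paper: the easy direction from the fact that basic pairs in $B_\alpha$ remain basic in $B$, and the converse by writing the equality in $\ig(B)$ as a chain of single steps, applying Lemma~\ref{equal} at each step, and using $\phi_{\alpha,\alpha}=1_{B_\alpha}$ to recover the original words at the endpoints. The only difference is that you explicitly verify the bookkeeping claim that all letters in the intermediate words lie in components $B_\beta$ with $\beta\geq\alpha$ (preserved under both splits and merges, since basic pairs have comparable components), which the paper simply asserts.
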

\begin{proof}
The necessity is obvious, as any basic pair in $B_\alpha$ must also be basic in $B$. Suppose now that we have $$\overline{x_1}~\cdots~\overline{x_n}=\overline{y_1}~\cdots~\overline{y_m}$$ in $\ig(B).$  Then there exists a sequence 
$$\overline{x_1}~\cdots~\overline{x_n}\sim\overline{u_1}~\cdots~\overline{u_s}\sim \overline{v_1}~\cdots~\overline{v_t}\sim~\cdots~\sim \overline{w_1}~\cdots~\overline{w_l}\sim\overline{y_1}~\cdots~\overline{y_m}.$$ 
Note that all idempotents involved in the above sequence lie in  components $B_\beta$ where $\beta\geq \alpha,$ so that successive applications of Lemma \ref{equal} give $\overline{x_1}~\cdots~\overline{x_n}=\overline{y_1}~\cdots~\overline{y_m}$ in $\ig(B_\alpha).$
\end{proof}

We remark here that for an arbitrary band $B$, Corollary \ref{equal-coro} need not be true.

\begin{ex} {\rm Let $B=B_{\alpha}\cup B_{\beta}$ be a band with semilattice structure and multiplication table defined by
\[\begin{array}{cc}
\begin{array}{c|ccccc}
  & l & u & w & u' & w'\\ \hline
l & l & u' & w' & u' & w' \\
u & u & u & w & u & w\\
w & w & u & w & u & w \\
u' & u' & u' & w' & u' & w'\\
w' & w' & u' & w' & u' & w'
\end{array}&
\begin{tikzpicture}
\node (a) at (-1,2) {$B_\alpha$};
\node (y) at (0,2) {\boxed{l}};
\node (b) at (-1.5,0) {$B_\beta$};
\node (x) at (0,0) {\begin{tabular}{|l|c|}
\hline
$u'$&$w'$\\
\hline
$u$&$w$\\
\hline
\end{tabular}};
\path[-,font=\scriptsize,>=angle 60]
(y) edge node[above]{} (x);
\end{tikzpicture}
\end{array}\]

It is easy to check that $B$ forms a band. By the uniqueness of normal forms in $\ig(B_\beta),$ we have $\overline{u'}~\overline{w}\neq \overline{w'}$ in $\ig(B_\beta)$. However in $\ig(B)$ we have
$$
\begin{aligned}
\overline{u'}~\overline{w}&=\overline{u'l}~\overline{w}\\
                                              &=
\overline{u'}~\overline{l}~\overline{w} \ \ \ \ (\mbox{as~} (u',l) \mbox{~is a basic pair})\\
                                              &=
\overline{u'}~\overline{lw} \ \ \ \ (\mbox{as~} (l,w) \mbox{~is a basic pair})\\
                                              &=
\overline{u'}~\overline{w'}\\
                                              &=
\overline{w'}
\end{aligned}
$$}
\end{ex}
With the above preparations, we now construct a 10-element normal band $B$ for which $\ig(B)$ is not abundant.
\begin{ex}\label{ex:final} {\rm Let $B=\mathcal{B}(Y; B_{\alpha}, \phi_{\alpha,\beta})$ be a strong semilattice $Y=\{\alpha,\beta,\gamma,\delta\}$ of rectangular bands (see the figure below), where $\phi_{\alpha, \beta}: B_{\alpha}\longrightarrow B_\beta$ is defined by $$a\phi_{\alpha, \beta}=e, b\phi_{\alpha, \beta}=f, c\phi_{\alpha, \beta}=g, d\phi_{\alpha, \beta}=h$$ the remaining morphisms being defined in the obvious unique manner.
\begin{center}
\begin{tikzpicture}[scale=1.0]
\node (a) at (-1,2.0) {$B_\alpha$};
\node (w) at (0,2) {\begin{tabular}{|l|c|}
\hline
$a$&$b$\\
\hline
$c$&$d$\\
\hline
\end{tabular}};
\node (b) at (-3,0) {$B_\beta$};
\node (x) at (-2,0) {\begin{tabular}{|l|c|}
\hline
$e$&$f$\\
\hline
$g$&$h$\\
\hline
\end{tabular}};
\node (y) at (2,0) {\boxed{v}};
\node (c) at (2.5,0) {$B_\gamma$};
\node (z) at (0,-2) {\boxed{u}};
\node (a) at (0,-2.5) {$B_\delta$};
\path[-,font=\scriptsize,>=angle 60]
(w) edge node[above]{} (x)
(w) edge node[right]{} (y)
(x) edge node[right]{} (z)
(y) edge node[above]{} (z);
\end{tikzpicture}
\end{center}
Considering the element $\overline{e}~\overline{v}\in \ig(B)$,  we have
$$
\begin{aligned}
\overline{e}~\overline{v}&=\overline{e}~\overline{dv}\\
                                              &=
\overline{e}~\overline{d}~\overline{v} \ \ \ \ (\mbox{as~} (d,v) \mbox{~is a basic pair})\\
                                              &=
\overline{e}~\overline{h}~\overline{v} \ \ \ \ (\mbox{as~} \overline{e}~\overline{d}=\overline{e}~\overline{d\phi_{\alpha,\beta}}=\overline{e}~\overline{h} \mbox{~by~Corollary~\ref{almost normal form-simple})}\\
                                              &=
\overline{e}~\overline{h}~\overline{av}\\
                                              &=
\overline{e}~\overline{h}~\overline{a}~\overline{v} \ \ \ \ (\mbox{as~} (a,v) \mbox{~is a basic pair})\\
                                              &=
\overline{e}~\overline{h}~\overline{e}~\overline{v} \ \ \ \ (\mbox{as~} \overline{h}~\overline{a}=\overline{h}~\overline{a\phi_{\alpha,\beta}}=\overline{h}~\overline{e} \mbox{~by~Corollary~\ref{almost normal form-simple})}
\end{aligned}
$$
However, $\overline{e}~\overline{h}~\overline{e}\neq \overline{e}$ in $\ig(B_\beta)$ by the uniqueness of normal forms, so by Corollary \ref{equal-coro}, we have $\overline{e}~\overline{h}~\overline{e}\neq e$ in $\ig(B)$, which implies $\overline{e}~\overline{v}$ is not $\mathcal{R}^*$-related to $\overline{e}$. On the other hand, we have known from Theorem \ref{IG(band)} that $\overline{e}~\overline{v}~\mathcal{\widetilde{R}}~\overline{e},$ so that by Lemma \ref{observation1} that $\overline{e}~\overline{v}$ is not $\mathcal{R}^*$-related any idempotent of $B$, so that $\ig(B)$ is not an abundant semigroup.}
\end{ex}


\begin{thebibliography}{99}

\bibitem{Otto:1993} R. V. Book and F. Otto, \textit{String-Rewriting Systems}, Springer, 1993.

\bibitem{brittenham:2009} M. Brittenham, S.W. Margolis and J. Meakin, `Subgroups of free idempotent
generated semigroups need not be free', {\em J. Algebra} {\bf 321} (2009),
3026--3042.

\bibitem{dolinka:2013} I. Dolinka and N. Ru\v{s}kuc, `Every group is a maximal subgroup of the free idempotent generated semigroup over a band',  {\em I.J.A.C.} {\bf 23} (2013), 573--581.

\bibitem{easdown:1985}  D. Easdown, `Biordered sets come from semigroups',
{\em J. Algebra} {\bf 96}
(1985), 581--591.
\bibitem{easdown:2010} D. Easdown, M. Sapir and M. Volkov, `Periodic elements in the free idempotent generated semigroup on a biordered set', {\em I.J.A.C.} {\bf 20} (2010), 189--194.

\bibitem{fountain:1982} J. Fountain, `Abundant semigroups', {\em Proc. London Math. Soc.} {\bf 44} (1982), 103--129.

\bibitem{gould:2012} V. Gould and D. Yang, `Every group is a maximal subgroup of a naturally occurring free idempotent generated semigroup', {\em Semigroup Forum}, to appear (DOI: 10.1007/s00233-013-9549-9).

\bibitem{gray:2012} R. Gray and  N. Ru\v{s}kuc, `On maximal subgroups of free idempotent generated semigroups', {\em Israel J. Math.} {\bf 189} (2012), 147--176.

\bibitem{howie:1995} J. M. Howie, \textit{Fundamentals of Semigroup Theory},
Oxford University Press, Oxford, 1995.

\bibitem{kambites:2011} M. Kambites, `Free adequate semigroups', {\em J. Australian Math. Soc.} {\bf 91} (2011), 365–-390.



\bibitem{lawson:1991} M. V. Lawson, `Semigroups and ordered categories. I. The reduced case', {\em J. Algebra} {\bf 144} (1991), 422--462.

\bibitem{nambooripad:1979} K. S. S. Nambooripad, `Structure of regular semigroups. I', {\em Memoirs
American Math. Soc.} {\bf  224} (1979).

\end{thebibliography}
\end{document}